\documentclass[11pt,reqno,a4paper]{amsart}

\usepackage[utf8]{inputenc}

\usepackage{amsthm,amsmath,amsfonts,amssymb}
\numberwithin{equation}{section}  
\usepackage{mathrsfs}
\usepackage{todonotes}

\usepackage{tikz-cd}

\usepackage{mathtools}	 
\usepackage{bm}	  
\usepackage{bbm}	 

\usepackage{tensor}	 
\usepackage[colorlinks=true]{hyperref} 
\usepackage[all]{hypcap}       
 
\newcommand{\C}{\mathbb{C}}

\newcommand{\R}{\mathbb{R}}
\newcommand{\Q}{\mathbb{Q}}
\newcommand{\Z}{\mathbb{Z}}
\newcommand{\ii}{\operatorname{i}}

\newcommand{\ch}{\operatorname{ch}}

\newcommand{\vol}{\operatorname{vol}}

\newcommand{\Rea}{\operatorname{Re}}
\newcommand{\Imm}{\operatorname{Im}}

\newcommand{\del}{\partial}
\newcommand{\delbar}{\bar{\partial}}
\newcommand{\PP}{\mathbb{P}}

\newcommand{\cA}{\mathcal{A}}
\newcommand{\cB}{\mathcal{B}}

\newcommand{\cL}{\mathcal{L}}
\newcommand{\chL}{\check{L}}

\newcommand{\cC}{\mathcal{C}}

\newcommand{\cF}{\mathcal{F}}

\newcommand{\olo}{\mathcal{O}}

\newcommand{\Hom}{\operatorname{Hom}}
\newcommand{\Ext}{\operatorname{Ext}}

\newcommand{\Bl}{\operatorname{Bl}}

\newcommand{\FS}{D\!\operatorname{FS}}
\newcommand{\Fuk}{D\!\operatorname{Fuk}}

\newcommand{\Coh}{\operatorname{Coh}}

\newcommand{\opN}{\operatorname{N}}
\newcommand{\cone}{\operatorname{Cone}}
\newcommand{\Ka}{\operatorname{Ka}}
 
\newtheorem{thm}{Theorem}[section]
 
\newtheorem{prop}[thm]{Proposition}
\newtheorem{lemma}[thm]{Lemma}
\newtheorem{cor}[thm]{Theorem}

\theoremstyle{definition}
\newtheorem{definition}[thm]{Definition}

\theoremstyle{remark}
\newtheorem{exm}[thm]{Example}
\newtheorem{rmk}[thm]{Remark}

\title[SLag smoothings, Calabi ansatz and slope inequalities]{Special Lagrangian smoothings, Calabi ansatz and stability conditions}
\author{Jacopo Stoppa}
 
\date{\today}

\begin{document}

\maketitle

\begin{abstract} As part of his work on special Lagrangian (\emph{sLag}) submanifolds with isolated conical singularities, Joyce proved a criterion for the existence of sLag smoothings, along a small variation of complex structure, for the union of two connected, compact, embedded sLags, with the same phase, intersecting transversely.

Here we construct infinitely many examples of pairs of non-compact, embedded sLags, of the same phase and with arbitrary dimension, intersecting only at infinity in a non-transverse way, which satisfy Joyce's criterion: along a small variation of complex structure, a sLag smoothing of their union exists on the stable locus where a slope inequality for periods of the holomorphic volume form holds. At least under a natural symmetry assumption, this slope inequality is also necessary for the existence of such smoothing. Our approach uses the Leung-Yau-Zaslow transform and the analysis of deformed Hermitian Yang-Mills connections with Calabi ansatz, due to Jacob and Sheu. In the unstable case, we prove that if a family of Lagrangian smoothings evolving under the natural Calabi-symmetric version of the mean curvature flow (due to Chan and Jacob) admits a limit, then this must be the union of the original sLags.

As an application we show that in our examples, in dimension two, the condition for the existence of the sLag smoothing is in fact equivalent to the stability of the corresponding object in the Fukaya-Seidel category, with respect to a known Bridgeland stability condition imported from algebraic geometry, and in the unstable case the limit of the Calabi-symmetric mean curvature flow in our result coincides with the Harder-Narasimhan decomposition, consistently with a general conjecture of Joyce. A similar (although weaker) result also holds in dimension three. 
\end{abstract}
\section{Introduction and main results}
\subsection{Special Lagrangian smoothings} We begin by recalling a fundamental result of D. Joyce concerning the existence of smoothings for the union of special Lagrangian submanifolds, in a particular case, following the exposition in \cite{Joyce_ConicSLagSurvey}, Section 9.3.

Let $(M, J, \omega, \Omega)$ denote an \emph{almost Calabi-Yau manifold} of dimension $n$, i.e. a K\"ahler manifold $(M, \omega)$ endowed with a holomorphic volume form $\Omega$. 

Suppose $L_1$, $L_2 \subset M$ are \emph{compact}, embedded \emph{special Lagrangian (sLag)} submanifolds with the same phase $e^{\ii \hat{\theta}}$, i.e. connected embedded submanifolds satisfying   
\begin{equation*}
\omega|_{L_i} = \Imm e^{-\ii \hat{\theta}}\Omega|_{L_i} = 0,
\end{equation*}
which intersect \emph{transversely} at a single point $p \in M$. 
\begin{rmk} We take all our embedded sLags to be connected.
\end{rmk}
Fix a smooth family $\{(M^s, J^s, \omega^s, \Omega^s)\!: s \in \cF\}$ of deformations of $M$, satisfying the cohomological condition $[\omega^s]|_{L_i} = 0$, $i =1, 2$, $s \in \cF$. Write
\begin{align*}
\int_{[L_i]}\Omega^s = R^s_i e^{\ii\hat{\theta}^s_i}, \, \int_{[L_1] + [L_2]} \Omega^s = R^s e^{\ii \hat{\theta}^s},\,i =1,2,
\end{align*}
where $R^s_i, R^s > 0$ and $\hat{\theta}^s_i, \hat{\theta}^s \in \R$ are chosen so that they depend continuously on $s$ with $\hat{\theta}^0_i = \hat{\theta}^0 = \hat{\theta}$ (this is possible up to making $\cF$ smaller).   
\begin{rmk}
Note that in fact, in this situation, the sLags $L_i \subset M$ deform to sLags $L^s_i \subset M^s$ for $s \in \cF$, and we have
\begin{equation*}
\int_{[L^s_i]}\Omega^s = \int_{[L_i]}\Omega^s. 
\end{equation*}
\end{rmk}
Assume that the intersection point $p$ is of \emph{type 1}, i.e. in Darboux coordinates at $p$ the tangent spaces at $L_1$, $L_2$ have canonical forms given by
\begin{align*}
\Pi^0 = \{(x_1, \ldots, x_m)\!: x_j \in \R\},\,\Pi^{\phi} = \{(e^{\ii \phi_1} x_1, \dots, e^{\ii \phi_n}x_n)\!: x_j \in \R\},
\end{align*}
where $\phi_1 \leq \phi_2 \leq \cdots \leq \phi_n$ satisfy
\begin{equation*}
\phi_1 + \cdots + \phi_n = \pi 
\end{equation*}
(this notion depends on the fixed order $\{L_1, L_2\}$; reversing the order produces an intersection point of type $n -1$).

Define the \emph{stable locus} $\cF^+ \subset \cF$ as
\begin{equation*}
\cF^+ = \{s\in \cF\!: \hat{\theta}^s_1 > \hat{\theta}^s > \hat{\theta}^s_2\}.
\end{equation*}
\begin{thm}[Joyce \cite{Joyce_ConicSLagSurvey}, Theorem 9.10]\label{JoyceThm} Let $n > 2$. There exists a smooth family of embedded special Lagrangian submanifolds $L^s \subset M^s$, with phase $e^{\ii \hat{\theta}^s}$, parametrised by $s \in \cF^+$, such that 
\begin{equation*}
L^s \to L_1 \cup L_2
\end{equation*} 
in the sense of currents as $s \to 0$.
\end{thm}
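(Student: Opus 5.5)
The approach I would take is Joyce's gluing construction for sLags with isolated conical singularities. It has three steps: build a local model near $p$, glue it into $L_1 \cup L_2$, and then perturb the result to a genuine sLag in $M^s$.

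\emph{Local model.} Since $p$ is of type 1, the union $\Pi^0 \cup \Pi^{\phi}$ is a special Lagrangian cone in $\C^n$ with phase $1$. It admits the Lawlor neck: an explicit, embedded, asymptotically conical sLag $N_A \subset \C^n$, with the same phase, asymptotic to $\Pi^0 \cup \Pi^{\phi}$ at rate $O(r^{2-n})$. The neck depends on a size parameter $A>0$, and the family is obtained by rescaling, $N_{tA} = t N_A$. Its only nontrivial relative period is a real number $A>0$. This number measures the flux $\int \Imm \Omega$ through the neck, and the sign is the point where the type-1 condition enters.

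\emph{Approximate solution.} Work on $M^s$, using the deformed sLags $L^s_i$ from the Remark; these still meet transversely at a point near $p$. Glue $t N_A$ into $L^s_1 \cup L^s_2$ near the intersection point, using Darboux charts and Lagrangian interpolation on an annulus $t^{\alpha} < r < 2t^{\alpha}$. The cohomological hypothesis $[\omega^s]|_{L_i}=0$ guarantees that the glued submanifold can be made exactly Lagrangian. The phase of the result is then determined by the periods: we need $\int_{[L^s]} \Imm e^{-\ii\hat\theta^s}\Omega^s = 0$ on every relevant chain. Taking the relative chain through the neck, this becomes $\Imm\bigl(e^{-\ii\hat\theta^s}\int_{[L_1]}\Omega^s\bigr) \sim c\, t^{n}$ with $c>0$. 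Here $t^n$ is the scaling of $\Imm\Omega$ integrated over the neck region. The left-hand side equals $R_1^s \sin(\hat\theta^s_1-\hat\theta^s)$, so we can solve for $t = t(s) > 0$ precisely when $\hat\theta^s_1 > \hat\theta^s$. Because $R^s e^{\ii\hat\theta^s}$ is the sum of the two periods, this is equivalent to $\hat\theta^s > \hat\theta^s_2$, that is, to $s \in \cF^+$. As $s\to 0$ we get $t(s)\to 0$, and this gives convergence to $L_1\cup L_2$ as currents.

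\emph{Perturbation, and the main obstacle.} Write nearby Lagrangians as graphs of $df$ with $f$ a function on the glued submanifold $\tilde L^s$. The sLag condition becomes the nonlinear elliptic equation $\Imm e^{-\ii\hat\theta^s}\Omega^s|_{\Gamma_{df}} = 0$, whose linearisation is $\Delta f$ up to lower order terms. I would solve it by a contraction mapping in weighted Hölder spaces $C^{k,\gamma}_{\beta}$, with weights adapted to the neck scale $t$. The error term is controlled by the phase-matching step above, which already kills the obstruction lying in the cokernel (the constants). The step I expect to be hardest is obtaining a uniform bound on the inverse of the Laplacian on $\tilde L^s$ as $t\to 0$, given that $\tilde L^s$ degenerates to a union of two pieces joined by a small neck. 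This is where $n>2$ is used: the rate $r^{2-n}$ decays, so the neck has no small nonzero eigenvalues apart from the single mode that is handled by the period condition, and a weight $\beta\in(2-n,0)$ gives an isomorphism with norm bounded independently of $t$. With this bound, the quadratic estimates close for small $t$, and smoothness in $s$ follows from the implicit function theorem.
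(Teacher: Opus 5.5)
Your proposal is correct and follows the same route as the paper, which does not reprove this result but cites Joyce and describes $L^s$ as a Lawlor neck of suitably chosen size glued into $L_1 \cup L_2$ at $p$. In both, the neck size is fixed by the period condition, and the restriction $n>2$ comes from the gluing analysis. One harmless slip: if $A$ denotes the neck's flux, dilation acts by $tN_A = N_{t^nA}$ rather than $N_{tA}$, which is exactly the $t^n$ scaling you use in the phase-matching step.
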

Joyce's construction actually provides more information on $L^s$: it is obtained by gluing a canonical local model, known as a Lawlor neck, of a suitable size into $L_1 \cup L_2$ at $p$, and thus its Hamiltonian isotopy class is given by the Lagrangian connected sum $ X^s_2 \# X^s_1$. 

Another crucial point is that the family of sLags $L^s$ cannot be extended over the unstable locus $\cF^- := \cF \setminus \cF^+$, namely, there is no sLag representative of $ X^s_2 \# X^s_1$ for $s \in \cF^-$ (see \cite{Thomas_MomentMirror}, Section 3).

\subsection{Outline} The purpose of the present work is to prove analogues of Joyce's smoothing Theorem \ref{JoyceThm} in some particular examples of a rather different nature, and to relate such examples to known algebro-geometric notions of stability. Our results are stated as Theorems \ref{MainThmIntro}, \ref{BStabCorSurf}, \ref{BStabCor}, \ref{FlowCor} and \ref{BundlesThmIntro} below.

In our case we work with special Lagrangians $\chL_1, \chL_2 \subset M$ which are \emph{non-compact} and \emph{intersect (non-transversely) only at infinity in $M$} (in a precise sense). However, the sLags $\chL_i \subset M$ still extend to smooth families of sLags $\chL^s_i \subset (M^s, \omega^s, \Omega^s)$ in a deformation of complex structure $M^s$ parametrised by $s \in \cF$, and suitable period integrals $\int_{[\chL^s_i]} \tilde{\Omega}^s$, involving classes $[\chL^s_i]$ in a rapid decay homology group and a modified volume form $\tilde{\Omega}^s$, are well defined. Introducing the stable locus 
\begin{equation*}
\cF^{+} = \{s\in \cF\!: \arg \int_{[\chL^s_2]} \tilde{\Omega}^s < \arg \int_{[\chL^s_1]} \tilde{\Omega}^s\},
\end{equation*}
we are still able to construct a smooth family of connected, embedded special Lagrangian submanifolds $\chL^s \subset M^s$ over $\cF^{+}$, such that 
\begin{enumerate}
\item[$(i)$] $\chL^s$ is asymptotic to $\chL^s_1 \cup \chL^s_2$ at infinity in $M^s$;
\item[$(ii)$] as $s \to 0$, we have
\begin{equation*}
\chL^s \to \chL_1 \cup \chL_2
\end{equation*}
smoothly in compact subsets of $M$;
\item[$(iii)$] under a natural further symmetry assumption, this family cannot be extended on the unstable locus  
\begin{equation*}
\cF^{-} = \{s\in \cF\!: \arg \int_{[\chL^s_2]} \tilde{\Omega}^s > \arg \int_{[\chL^s_1]} \tilde{\Omega}^s\}.
\end{equation*}
\end{enumerate}
In our examples, these results are not proved by gluing, but by an argument using mirror symmetry.
\subsection{Relation to Thomas-Yau conjectures} Theorem \ref{JoyceThm} fits naturally in the circle of ideas relating the existence of special Lagrangian submanifolds to notions of stability, namely the Thomas-Yau conjectures \cite{Thomas_MomentMirror, ThomasYau}, especially from the viewpoints of Joyce \cite{Joyce_SLags} and Li \cite{YangLi_ThomasYau}. Indeed this result was a primary motivation for these conjectures, see in particular \cite{Thomas_MomentMirror}, Section 3 and \cite{YangLi_ThomasYau}, Section 2.7. 

In particular, one expects that, when the Fukaya category $\Fuk(M^s, \omega^s)$ is well defined (i.e. if $M^s$ is suitably convex at infinity, see \cite{Joyce_SLags}, Conjecture 3.2), then for all $s \in \cF$ there should exist an exact triangle 
\begin{equation*}
L^s_2 \to \tilde{L}^s \to L^s_1 \to \chL^s_2[1], 
\end{equation*}
in $\Fuk(M^s, \omega^s)$, such that $\tilde{L}^s$ is isomorphic to the sLag $L^s$ as objects of $\Fuk(M^s, \omega^s)$ for $s \in \cF^+$, while $\tilde{L}^s$ cannot be represented by a sLag for $s \in \cF^-$. 

In our cases, thanks to the work of several authors (\cite{Zaslow_toricHMS, KuwagakiCohCon, SibillaCohCon, Shende_toricMirror, ZhouCohCon}) there is a well defined Fukaya-Seidel category $\FS(T, W^s)$ attached to $M^s$ (more precisely, as we will recall, to a corresponding Landau-Ginzburg model $W^s\!: T \to \C$, $T$ denoting an algebraic torus), the sLags $\chL^s_i$ define objects in $\FS(T, W^s)$, and we expect that there is an exact triangle
\begin{equation}\label{FukTriangle}
\chL^s_2 \to \widetilde{\chL^s} \to \chL^s_1 \to \chL^s_2[1],  
\end{equation}
in $\FS(T, W^s)$ with the same properties. However, at present, due to the actual construction of the Fukaya-Seidel category on which we rely (i.e. the coherent-constructible approach of \cite{Zaslow_toricHMS}), it is not even clear that our sLag smoothings $\chL^s$ define objects in $\FS(T, W^s)$: see the discussion in Section \ref{ExactSec}.
\subsection{Toric mirrors}\label{MirrorIntroSec} Our results concern the case when the almost Calabi-Yau manifold $M$ is obtained as the mirror of a toric Fano manifold. The idea of studying versions of the Thomas-Yau conjectures in this setting was proposed by Collins and Yau (see \cite{CollinsYau_momentmaps_preprint}, Section 9) and pursued e.g. in \cite{J_toricThomasYau, J_sLagStability} (see also \cite{FanSLags} for recent related results).

Let $X$ denote a toric Fano manifold, endowed with a torus-invariant K\"ahler form $\omega$, with K\"ahler potential $\varphi$. We write $x_i$ for the real part of the holomorphic coordinates on the complement of the toric boundary, isomorphic to $(\C^*)^n$, and denote by $[\omega] \in H^{1,1}(X)$ the K\"ahler class.

From a differential-geometric perspective, the \emph{Leung-Yau-Zaslow mirror} of $(X, \omega)$ is given by an explicit almost Calabi-Yau manifold $(M, \omega_M, \Omega_M)$, where $M \subset (\C^*)^n$ is a bounded open domain, $\omega_M$ is a K\"ahler form, and $\Omega_M$ is a holomorphic volume form, see e.g. \cite{CollinsYau_momentmaps_preprint}, Section 9. All these data depend on $\omega$ as a differential form, and $M$ is endowed with a special Lagrangian torus fibration over the momentum polytope $\Delta^o(\omega)$ of $(X, \omega)$. 

Using the natural identification $M \cong (S^1)^n \times \Delta^o$ induced by the moment map, with coordinates $(\tilde{\theta}_i,\,y_i = \del_{x_i} \varphi)$, the complex structure on $M$ is such that the coordinates $\tilde{w}_i = y_i + \ii \tilde{\theta}_i$ are holomorphic, while the K\"ahler metric is given by
\begin{equation*}
\tilde{g} = \sum_{i, j} u_{ij} \left(dy_i\otimes dy_j + d\tilde{\theta}_i \otimes d\tilde{\theta}_j\right),
\end{equation*}
where $u_{ij}$ denotes \emph{symplectic potential}, i.e. the Legendre transform of the K\"ahler potential $\varphi$. The holomorphic volume form is given by 
\begin{equation*}
\Omega_M = d\tilde{w}_1 \wedge \cdots \wedge d\tilde{w}_n.
\end{equation*}
Note that setting $\tilde{z}_i = e^{\tilde{w}_i}$ defines a biholomorphism of $M$ with a bounded open domain in $(\C^*)^n$. 

Algebro-geometrically, the mirror can be thought of as the algebraic torus $T = (\C^*)^n$, endowed with a regular function $W$, the \emph{Landau-Ginzburg (LG) potential}, and a holomorphic volume form $\Omega_{T}$, both constructed explicitly from $(X, [\omega])$, depending only on the cohomology class of $\omega$ (see \cite{Givental_toric}). If $\tilde{z}_i$ denote standard holomorphic coordinates on $T$, we have 
\begin{equation*}
\Omega_{T} = \frac{d\tilde{z}_1}{\tilde{z}_1} \wedge \cdots \wedge \frac{d\tilde{z}_n}{\tilde{z}_n}.
\end{equation*}
Fix a toric fan $\Sigma$ for $X$ and let $D_i$, $i = 1, \ldots, m$ denote the toric divisors. Then, in a fixed trivialisation of the mirror family, the LG potential is given by
\begin{equation*}
W = \sum^m_{i=1} a_i( \omega ) \tilde{z}^{v_i}, 
\end{equation*}
where $v_i$ is the primitive generator of the ray of $\Sigma$ dual to $D_i$, and the coefficients $a_i$ are uniquely determined, up to rescalings of the torus variables, by the condition that, for any integral linear relation $\sum d_i v_i = 0$, corresponding to a unique curve class $[C]$ such that $D_i . [C] = d_i$, we have
\begin{equation*}
\prod_i a^{d_i}_i = e^{-2\pi \int_C\omega}. 
\end{equation*}

An important role is played by a scaling parameter $k > 0$ for the K\"ahler form $\omega \mapsto k \omega$. The mirrors $(M_k, \omega_{M, k}, \Omega_{M, k})$ and $(T, W_k, \Omega_{T})$ relative to $k \omega$ approach a \emph{large complex structure limit} for $k \gg 1$. 

We study analogues of Theorem \ref{JoyceThm} for suitable unions of embedded special Lagrangians 
\begin{equation*}
\chL_1 \cup \chL_2 \subset (M_k, \omega_{M, k}, \Omega_{M, k}).
\end{equation*}
The resulting smoothings $\chL^s$ will be given by sLag multi-sections of a deformed fibration $M^s \to \Delta^o$. 

\subsection{Main result for $X = \Bl_p \PP^n$} Our first class of examples is constructed on the blowup $X = \Bl_p \PP^n$ of $\PP^n$ at point $p$. In this case we can require that all our data enjoy a particular type of symmetry known as \emph{Calabi ansatz}, that is, roughly speaking, that they are described by real functions of a single momentum coordinate, satisfying suitable boundary conditions: this is a standard construction in K\"ahler geometry, reviewed in Section \ref{CalabiSec}. 

Fix a K\"ahler parameter $a > 1$. Let us define subsets of the K\"ahler cone $\Ka(X)$ of $X$ by
\begin{equation*}
\cF^a_k := \{k a H - k(1-s)E\!: s\in (-\varepsilon, \varepsilon)\} \subset \Ka(X),
\end{equation*}
where $H$, $E$ denote the pullback hyperplane and exceptional divisors, $\varepsilon > 0$ is sufficiently small and $k > 0$. The subsets $\cF^a_k$ will be the base spaces of our deformations. 

For each $s \in \cF^a_k$ we fix a representative $k\omega_{a, s}$ of the corresponding K\"ahler class $k a H - k(1-s)E$, where $\omega_{a, s}$ has Calabi symmetry. Then we can construct the family of almost Calabi-Yau manifolds $(M^s_k, \omega_{M^s, k}, \Omega_{M^s, k})$ mirror to $(X, k\omega_{a, s})$ in the sense of Leung-Yau-Zaslow, as well as the family of algebro-geometric mirrors $(T, W^s_k, \Omega_T)$; both are parametrised by $\cF^a_k$. 

\begin{thm}\label{MainThmIntro} There is a dense subset of K\"ahler parameters $K \subset (1, \infty)$ such that, for $a \in K$ and for some sufficiently large, fixed $k > 0$, we have
\begin{enumerate}
\item[$(i)$] there exist smooth families of sLag sections $\chL^s_1$, $\chL^s_2 \subset M^s_k$, para\-metrised by $s \in \cF^a_k$, intersecting (non-transversely) only at infinity in $M^s_k$ (in the sense of Definition \ref{IntersectionDef}), such that $\chL^0_1$, $\chL^0_2$ have the same phase $e^{\ii \hat{\theta}}$;
\item[$(ii)$] there exists a smooth family of embedded, graded special Lagrangian submanifolds $\chL^s \subset M^s_k$, parametrised by the well defined, non-empty stable locus 
\begin{equation*}
(\cF^a_k)^+ = \big\{\arg \int_{[\chL^s_2]} e^{-W^s_k} \Omega_T < \arg \int_{[\chL^s_1]} e^{-W^s_k} \Omega_T\big\} \subset \cF^a_k,
\end{equation*}   
($[\chL^s_i]$ denoting natural integration cycles as in \cite{Fang_charges, Iritani_survey}, see Section \ref{IntegrationSec}) which are asymptotic to $\chL^s_1 \cup \chL^s_2$ at infinity in $M^s_k$, and which satisfy
\begin{equation*}
\chL^s \to \chL^0_1 \cup \chL^0_2 
\end{equation*}
smoothly as $s \to 0$, locally on $M^s_k$;
\item[$(iii)$] the special Lagrangians $\chL^s$ are obtained as multi-sections with Calabi symmetry, and with the latter assumptions, there is no extension of the family $\chL^s$ over the well defined, non-empty unstable locus   
\begin{equation*}
(\cF^a_k)^- = \big\{\arg \int_{[\chL^s_2]} e^{-W^s_k} \Omega_T > \arg \int_{[\chL^s_1]} e^{-W^s_k} \Omega_T\big\} \subset \cF^a_k;
\end{equation*}   
\item[$(iv)$] the sLag sections $\chL^s_1$, $\chL^s_2$ satisfy the further property that they correspond to well defined elements in $\FS(T, W^s)$, and that the Floer cohomology group $HF^1(\chL^s_1, \chL^s_2)$ parametrising exact triangles \eqref{FukTriangle}  is nontrivial.
\end{enumerate}  
\end{thm}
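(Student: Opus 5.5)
The plan is to go through the Leung--Yau--Zaslow transform and turn every special Lagrangian statement about $M^s_k$ into a statement about deformed Hermitian Yang--Mills (dHYM) connections on holomorphic bundles over $X = \Bl_p \PP^n$ with the Kähler form $k\omega_{a,s}$. A Lagrangian section of $M^s_k \to \Delta^o$ is the graph of $d$ of a function on the moment polytope. Under LYZ it corresponds to a torus-invariant Hermitian metric on a line bundle $\cL$. The special Lagrangian condition with phase $e^{\ii\hat\theta}$ becomes the dHYM equation $\Imm\big(e^{-\ii\hat\theta}(k\omega_{a,s}+\ii F)^n\big)=0$. Likewise, a multi-section of degree two corresponds to a rank-two bundle, or to a twisted object. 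With Calabi symmetry, all these equations reduce to ODEs in the single momentum coordinate on $[1-s,a]$, with boundary conditions at the zero and infinity sections.

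\textbf{Step 1: part $(i)$.} I take $\chL^s_1,\chL^s_2$ to be the LYZ transforms of the dHYM metrics on two line bundles $\cL_1,\cL_2$ of the form $\olo(\alpha H-\beta E)$. For large $k$, Jacob--Sheu give existence of Calabi-symmetric dHYM solutions on line bundles whenever the phase lies in the supercritical range. These solutions vary smoothly in $s$. I choose $\cL_1,\cL_2$ so that at $s=0$ their central charges $\int (k\omega+\ii F)^n$ have the same argument. This is a rationality condition on $a$: the equation for the pair of integers is polynomial in $a$, and I expect it to be solvable for a dense set $K\subset(1,\infty)$. The two sections differ only through the behaviour of $F$ near $E$ and at the boundary of $\Delta^o$, where the two graphs become asymptotic. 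Checking Definition \ref{IntersectionDef} then reduces to a boundary-asymptotics computation with the explicit Calabi profiles.

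Also in Step 1, I identify the periods. Using the integration cycles of Section \ref{IntegrationSec} (Fang, Iritani), I would show that $\int_{[\chL^s_i]}e^{-W^s_k}\Omega_T$ agrees, up to a common nonvanishing factor and a Gamma-class correction, with the central charge $Z(\cL_i)$ for $k\gg1$. This lets me replace the stable locus by a slope inequality $\arg Z_s(\cL_2)<\arg Z_s(\cL_1)$. Expanding to first order in $s$ shows that both loci are nonempty, on opposite sides of $s=0$.

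\textbf{Step 2: parts $(ii)$ and $(iii)$.} The smoothing $\chL^s$ should be the LYZ transform of a dHYM connection on the nontrivial extension $0\to\cL_2\to\mathcal{E}\to\cL_1\to0$. With Calabi symmetry this reduces to an ODE with two eigenvalue functions that become a multi-valued section. The plan is as follows:
\begin{enumerate}
\item Reduce the equation to a first-order ODE for a Lagrangian angle profile $\theta(y)$, with endpoint values dictated by $\cL_1$ and $\cL_2$.
\item Show, in the style of Jacob--Sheu, that a solution with the required monotonicity and boundary behaviour exists iff the target phase lies strictly between the phases of the endpoints. By the periods identification, this is exactly the stable inequality.
\item As $s\to0$, the two phases merge, and the profile concentrates onto the two sections. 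This gives smooth convergence to $\chL^0_1\cup\chL^0_2$ on compact subsets.
\end{enumerate}
For part $(iii)$, the nonexistence comes from the same ODE. Integrating the equation yields an obstruction, the analogue of a Chern number inequality, which fails when the inequality is reversed.

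\textbf{Step 3: part $(iv)$.} The sections are mirror to line bundles. Under the coherent--constructible correspondence, line bundles map to objects of $\FS(T,W^s)$. Homological mirror symmetry then gives $HF^1(\chL^s_1,\chL^s_2)\cong \Ext^1(\cL_1,\cL_2)=H^1(X,\cL_2\otimes\cL_1^{-1})$. I would choose the integers so that this cohomology is nonzero, using the Koszul-type computation on $\Bl_p\PP^n$ for $\olo(mE)$ twists, and make this choice compatible with the equal-phase condition of Step 1.

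\textbf{Main obstacle.} The hardest step is the second one: proving sharp existence and nonexistence for the multi-section ODE exactly at the threshold, and matching that threshold with the period inequality. I would attack it by an explicit first integral of the Calabi-reduced equation, combined with a continuity and barrier argument in $s$.
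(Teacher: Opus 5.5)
Your proposal has a genuine gap at its central step (Step 2). A secondary inconsistency between Steps 1 and 3 is explained below.

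\textbf{The construction of the smoothing.} You propose to obtain $\chL^s$ as the Leung--Yau--Zaslow transform of a dHYM connection on a rank-two extension $\mathcal{E}$ of $\cL_1$ by $\cL_2$, reduced to an ODE with two eigenvalue functions. No such correspondence exists: LYZ relates Hermitian line bundles to Lagrangian sections, and the sLag actually produced is not a two-valued section over the whole polytope. In momentum coordinates it is a connected curve lying over $[x_*,a)$ only. It is two-valued over $(x_*,a)$, folds back with vertical tangent at $x_*$, and is empty over $(b,x_*)$. So it is not the transform of any bundle on $X$; the paper explicitly leaves open even how it represents $\cone(\varphi)$. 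Consequently your sharp existence/non-existence analysis has no mechanism, and the step you flag as the main obstacle stays unresolved.

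The missing idea is that the Calabi-reduced equation is exact. Calabi-symmetric sLag (multi-)sections are precisely pieces of level sets $\{\Imm e^{-\ii\hat\theta}(x+\ii y)^n=c\}$ inside the strip $(b,a)\times\R$. At a fold point the union of the two branches is smooth, by comparison with Harvey--Lawson's $SO(n)$-invariant sLags. The paper then proceeds as follows:
\begin{enumerate}
\item Choose $c$ so that the relevant component has its vertical tangent exactly at $x=1$.
\item Choose $(a,\hat\theta)$ so that this component passes through $(a,0)$. Rationality of $ap/q$ gives integrality after scaling by $k$, for a dense set of $a$.
\item This component is independent of $b$. It gives $\chL^s$ for $b=1-s<1$, converges to its two branches as $b\to1^-$, and leaves the strip for $b>1$.
\item The two boundary points $(a,0)$ and $(a,ap)$ determine $\hat\theta$ and $c$, so there is no other Calabi-symmetric candidate. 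This proves $(iii)$.
\item The sections $\chL^s_i$ are the two branches at $b=1$, extended to $b$ near $1$ by openness of the Jacob--Sheu criterion.
\item The threshold $b=1$ is matched to the period inequality via the signs of $\del_b\lambda(L_i)|_{b=1}$, followed by Iritani's large-$k$ expansion.
\end{enumerate}

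\textbf{Steps 1 and 3.} These conflict with the required geometry. Suppose two Calabi profiles meet at $(b,q)$ with vertical tangent. Then one has $f'\to+\infty$ and the other $f'\to-\infty$, so their lifted angles $(n-1)\arctan(q/b)\pm\frac{\pi}{2}$ differ by $\pi$. Hence two unshifted line bundles with equal $\arg Z$ can never intersect at infinity in the sense of Definition \ref{IntersectionDef}. You need a shift, as in the paper's choice $(\chL^s_1,\chL^s_2)=(\cL^{1-s}_2[1],\cL^{1-s}_1)$ for $p<0$. With the shift, $HF^1(\chL^s_1,\chL^s_2)\cong\Hom(L_2,L_1)\cong H^0(X,\olo(-kapH))\neq0$, so nontriviality comes from sections and not from an $H^1$. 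Indeed $H^1(X,\olo(\pm kapH))=0$ for this pair. Your plan to pick integers with $H^1(X,\cL_2\otimes\cL_1^{-1})\neq 0$, together with equal phases, therefore cannot produce the configuration the theorem requires.
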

\begin{rmk}\label{PhaseAnglesRmk} The sLags $\chL^s_i$ constructed in the proof of Theorem \ref{MainThmIntro} have a specific Lagrangian phase angle $\hat{\theta} \in (0, \pi) \setminus \{\frac{\pi}{2}
\}$, depending on the K\"ahler parameter $a$. All phase angles in $(0, \pi) \setminus \{\frac{\pi}{2}
\}$ are achieved for suitable values of $a$. We expect that a variant of the proof could show the existence of sLag sections of arbitrary Lagrangian phase angle, for fixed $a$, satisfying the conclusions of Theorem \ref{MainThmIntro}.
\end{rmk}
\begin{rmk} As explained in \cite{Joyce_ConicSLagSurvey}, Section 3.3, the case $n = 2$ is special in the analysis needed for gluing methods, leading to the restriction $n \geq 3$ in Theorem \ref{JoyceThm}. This plays no role in our case. 
\end{rmk}
The proof of Theorem \ref{MainThmIntro} is completed in Section \ref{SlopeInequSec}, relying on the construction of sLag (multi-)sections provided in Sections \ref{MultiSec} and \ref{sLagsSec}. This construction is based on two main tools, the Calabi ansatz (as in the work of Jacob and Sheu \cite{JacobSheu}) and the Leung-Yau-Zaslow transform, briefly recalled in Section \ref{CalabiSec}. See e.g. \cite{Lotay_GibbonsHawkingSLag} for global results on the Thomas-Yau conjectures in dimension $n = 2$, obtained by imposing a different (Gibbons-Hawking) ansatz.

\subsection{Relation to Bridgeland stability on $X =  \Bl_p \PP^n$ for $n = 2, 3$} Our main motivation for constructing the examples given by Theorem \ref{MainThmIntro} is that of establishing a precise relation between smoothing results for a union of sLags $\chL_1 \cup \chL_2$, as in Joyce's Theorem \ref{JoyceThm}, and Bridgeland stability of Lagrangians $\chL$ fitting into an exact triangle \eqref{FukTriangle}, as predicted by Joyce's version of the Thomas-Yau conjectures (see in particular \cite{Joyce_SLags}, Conjecture 3.2). 

Working on the mirror $M$ of $X = \Bl_p \PP^n$ with Calabi ansatz greatly simplifies the construction of sLag multi-sections, and of stability conditions on $\FS(T, W^s)$ when $n =2, 3$ (compare e.g. with the global results of \cite{Lotay_GibbonsHawkingSLag} using the Gibbons-Hawking ansatz, for which stability conditions are not known). However, imposing the Calabi ansatz forces one to consider non-transverse intersections at infinity, as in Theorem \ref{MainThmIntro}. This approach allows us to solve the Bridgeland stability problem for our examples in dimension $n = 2$ (Theorem \ref{BStabCorSurf} below), and to make progress in dimension $n =3$ (Theorem \ref{BStabCor}). 
\subsubsection{$n = 2$} Fix $X = \Bl_p \PP^2$. By the general theory of Bridgeland stability conditions on projective surfaces \cite{ArcaraBertram}, for any $[\omega] \in \Ka(X)$ there exist distinguished, geometric stability conditions $(\cA, Z)$ on the bounded derived category $D^b(X)$ of coherent sheaves on $X$, where $\cA \subset D^b(X)$ denotes the heart of a suitable bounded t-structure (obtained from the standard heart $\Coh(X)$ via tilting), while the central charge is given by 
\begin{equation*}
Z(E) = -\int_X e^{-\ii \omega}\ch(E).
\end{equation*} 

Toric mirror symmetry \cite{Zaslow_toricHMS, KuwagakiCohCon, SibillaCohCon, Shende_toricMirror, ZhouCohCon} gives an equivalence
\begin{equation*}
D^b(X) \cong \FS(T, W^s), 
\end{equation*}
so there are induced stability conditions on $\FS(T, W^s)$, which we denote by $(\check{\cA}^s, \check{Z}^{s})$.
\begin{cor}\label{BStabCorSurf} Suppose we are in the situation of Theorem \ref{MainThmIntro}, in dimension $n = 2$. Then
\begin{enumerate}
\item[$(i)$] the special Lagrangian sections $\chL^s_1$, $\chL^s_2 \subset M^s_k$ give well defined objects in the heart $\cA^s \subset \FS(T, W^s)$ of the stability condition $(\check{\cA}^s, \check{Z}^{s})$ corresponding  to the K\"ahler class $[k \omega_{a, s}]$, which are $(\check{\cA}^s, \check{Z}^{s})$-stable;
\item[$(ii)$] the locus of complex structures $(\cF^a_k)^+ \subset \cF^a_k$ for which the sLag smoothing $\chL^s$ exists coincides with the locus where all nontrivial extensions of $\chL^s_1$ by $\chL^s_2$, corresponding to nonzero elements in the nontrivial group $\Ext^1_{\check{\cA}^s}(\chL^s_1, \chL^s_2)$, are $(\check{\cA}^s, \check{Z}^{s})$-stable. 
\end{enumerate}
\end{cor}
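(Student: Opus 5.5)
We transport everything to the algebraic side via the equivalence $D^b(X) \cong \FS(T, W^s)$ and identify $\chL^s_1$, $\chL^s_2$ with explicit objects of $D^b(X)$ for $X = \Bl_p \PP^2$. The sLag sections constructed in Theorem \ref{MainThmIntro}, through the Leung-Yau-Zaslow transform with Calabi symmetry, are mirror to holomorphic line bundles carrying dHYM connections. Under the coherent-constructible correspondence \cite{Zaslow_toricHMS}, a Lagrangian section determines a line bundle $\olo(D)$ from its asymptotics at infinity, that is, from its winding data along the toric boundary. The first step is therefore to read off line bundles $\cL_1 = \olo(b_1 H + c_1 E)$ and $\cL_2 = \olo(b_2 H + c_2 E)$ mirror to $\chL^s_1$, $\chL^s_2$, and to check that the mirror central charge $\check Z^s$ matches the periods $\int_{[\chL^s_i]} e^{-W^s_k}\Omega_T$ up to a common normalisation. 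For the latter we invoke the Gamma-integral-structure results of \cite{Fang_charges, Iritani_survey} from Section \ref{IntegrationSec}. Since $\int_X e^{-\ii k\omega}\ch$ is the leading term of the Gamma-class central charge for $k \gg 1$, and arguments are compared only on small families, the slope inequality defining $(\cF^a_k)^\pm$ becomes the inequality $\arg Z(\cL_2) \lessgtr \arg Z(\cL_1)$.

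For $(i)$, recall that in the Arcara-Bertram construction $\cA = \langle \cF_\beta[1], \mathcal{T}_\beta\rangle$ is a tilt of $\Coh(X)$ at a twisted slope. We choose (or verify from Remark \ref{PhaseAnglesRmk}, with phases in $(0,\pi)$) the parameter so that both $\cL_i$, or both shifts $\cL_i[1]$, lie in the heart. Stability of a line bundle in the tilted heart on a surface would follow from Arcara-Miles type results: line bundles on surfaces without negative self-intersection curves are stable, and on $\Bl_p\PP^2$ the exceptional curve $E$ is the only obstruction. Since $k$ is large, we are in the large-volume region. There one can argue directly that any destabilising subobject would come from a torsion-free rank-one subsheaf $\cL_i(-C)$ with $C$ effective, or from $\olo_E(m)$, and compare slopes using $k \gg 1$ together with the explicit Calabi-ansatz class $kaH - k(1-s)E$.

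For $(ii)$, we compute $\Ext^1(\cL_1,\cL_2) = H^1(X, \cL_2 \otimes \cL_1^{-1})$. We expect $\cL_2\otimes\cL_1^{-1}$ to be of the form $\olo(-jE)$, or a twist with the same $H^1$, which matches the non-triviality of $HF^1$ in Theorem \ref{MainThmIntro}$(iv)$. Each nonzero extension $0\to \cL_2 \to \mathcal{G} \to \cL_1 \to 0$ in $\check\cA^s$ is then a rank-two object. If $\arg Z(\cL_2) \ge \arg Z(\cL_1)$, then $\cL_2$ destabilises it, so the stable locus is contained in $(\cF^a_k)^+$. For the converse, on $(\cF^a_k)^+$ we must exclude every other destabilising subobject of $\mathcal{G}$. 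Any subobject $A\subset\mathcal{G}$ maps to $\cL_1$ with kernel in $\cL_2$. Since both $\cL_i$ are stable with nearby phases and $s$ is small, $\cF^a_k$ lies in a single chamber for all walls except the one where $\arg Z(\cL_1) = \arg Z(\cL_2)$. By local finiteness of walls, after shrinking $\varepsilon$, the only possible destabilisers near $s=0$ are $\cL_2$ itself or objects whose phases at $s=0$ are strictly separated. Nontriviality of the extension rules out splitting-type subobjects.

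The main obstacle is this last chamber argument, namely bounding uniformly the set of potential walls for all extensions $\mathcal{G}$ near the point $s=0$ where $\cL_1$ and $\cL_2$ have equal phase. We would first try to use the support property to obtain finiteness of walls for the fixed class $\ch(\cL_1)+\ch(\cL_2)$. We would then show that any Jordan-Hölder factor of $\mathcal{G}$ at $s=0$ must be $\cL_1$ or $\cL_2$, using the stability of both together with $\Hom(\cL_2,\cL_1)=0$. This would force the only wall through $s=0$ to be the $\cL_1$/$\cL_2$ wall.
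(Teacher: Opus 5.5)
The gap is that you drop the grading of the sLags, and this breaks the concrete steps of both $(i)$ and $(ii)$.

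\textbf{The objects in the heart.} In the proof of Theorem \ref{MainThmIntro} one sets $(\chL^s_1,\chL^s_2)=(\cL^{1-s}_2[1],\cL^{1-s}_1)$ for $p<0$, and $(\cL^{1-s}_1[1],\cL^{1-s}_2)$ for $p>0$. So $\chL^s_1$ is mirror to a \emph{shifted} line bundle. Indeed $Z(L_1)$ and $Z(L_2)$ lie in opposite half planes (Lemma \ref{PhasesLem}, proof of Proposition \ref{SlopeInequProp}), so exactly one of the two line bundles must be shifted to lie in any heart for $Z$.

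There is also no parameter for you to choose. The heart is the Arcara--Bertram tilt with zero $B$-field for $[k\omega_{a,s}]$. Membership is decided by the signs of $L_1\cdot\omega_{a,b}=\cot(\hat{\theta})(2\sin^{-4}(\hat{\theta})-b)$ and $L_2\cdot\omega_{a,b}=-b\cot(\hat{\theta})$, computed from Example \ref{SurfExm}. These signs are opposite, giving $L_1,L_2[1]\in\cA$ for $\hat{\theta}<\frac{\pi}{2}$ and $L_1[1],L_2\in\cA$ otherwise. This contradicts your claim that both $\cL_i$, or both $\cL_i[1]$, lie in the heart.

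\textbf{The extension group.} The error propagates to $(ii)$. The two line bundles differ by $\olo(\pm kapH)$, which is pulled back from $\PP^2$. So your $H^1(X,\cL_2\otimes\cL_1^{-1})$ vanishes, and your computation would make the extension group zero. The nonzero group comes purely from the shift: $\Ext^1_{\cA}(L_2[1],L_1)=\Hom(L_2,L_1)=H^0(X,\olo(-kapH))$, as in Section \ref{ExactSec}. The extensions are $\cone(L_2\xrightarrow{\varphi}L_1)\cong L_1|_C$ with $C\in|{-kapH}|$. These are rank-zero torsion sheaves, not rank-two objects. Likewise the period inequality compares $\arg Z(L_1)$ with $\arg Z(L_2[1])$, not with $\arg Z(L_2)$.

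\textbf{Stability in $(i)$.} Your large-volume/Arcara--Miles sketch is not yet an argument. Scaling by $k$ rescales $\omega$ and $c_1(L_i)$ together, leaving arguments of $Z$ unchanged, so it is not a large-volume limit for fixed objects. The test against subobjects involving $E$ is also never carried out. The input you are missing is the one the paper uses. The line bundles $L_1$, $L_2$ admit dHYM solutions for $b$ near $1$, including $b=1$, by Jacob--Sheu (Section \ref{sLagsSec}). By \cite{J_toricThomasYau}, Theorem 1.10, a consequence of Collins--Shi \cite{Collins_stability}, dHYM solvability gives $(\cA,Z)$-stability of $L_1,L_2[1]$ (resp.\ $L_1[1],L_2$), so the stability is immediate.

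\textbf{What survives.} Once the objects are corrected, your wall-crossing argument for $(ii)$ does work, and it is more careful than the paper's ``follows straightforwardly''. At $s=0$ the two objects are stable of equal phase. The relevant Hom groups vanish: $\Hom(\chL^0_1,\chL^0_2)=\Ext^{-1}(L_2,L_1)=0$ and $\Hom(\chL^0_2,\chL^0_1)=\Ext^1(L_1,L_2)=H^1(X,\olo(kapH))=0$. Local finiteness of walls for the single class of all extensions then leaves $\chL^s_2\subset\mathcal{G}$ as the only possible destabiliser near $s=0$. It destabilises exactly off $(\cF^a_k)^+$.
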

Theorem \ref{BStabCorSurf} is proved in Section \ref{SlopeInequSec}.
\subsubsection{$n = 3$} Fix $X = \Bl_p \PP^3$. Let $(\cA, Z^{0, \Gamma})$ be the distinguished geometric stability condition on $D^b(X)$ constructed in \cite{BernardaraMacri_threefolds}, Section 4.B. Here $\cA \subset D^b(X)$ denotes the heart of a suitable bounded t-structure (obtained from the standard heart $\Coh(X)$ via double tilting), while the central charge 
\begin{equation*}
Z^{0, \Gamma} \in \Hom(\operatorname{Num}_{\Q}(X), \C)
\end{equation*}
is given by 
\begin{equation*}
Z^{0, \Gamma}(E) = -\int_X e^{-\ii \omega_0}\ch(E) + \Gamma \cdot \ch_1(E),
\end{equation*}
for suitable choices of a K\"ahler class $[\omega_0]$ and of a cycle $\Gamma \in A_1(X)_{\R}$. In the construction of \cite{BernardaraMacri_threefolds}, one has
\begin{equation*}
[\omega_0] = -\frac{1}{2} K_X = 2 H - E,\,\Gamma = \frac{H^2 + E^2}{6} + C_0 \omega^2,
\end{equation*}
for a suitable choice of $C_0 \in \R$. 

By the fundamental deformation property of Bridgeland stability, there exist nearby stability conditions $(\cA^s, Z^{s, \Gamma})$ with central charges
\begin{equation*}
Z^{s, \Gamma}(E) = -\int_X e^{-\ii \omega_{a, s}}\ch(E) + \Gamma \cdot \ch_1(E) 
\end{equation*}  
for $[\omega_{a, s}] = a H - (1-s)E$ sufficiently close to $[\omega_0]$.

By toric mirror symmetry $D^b(X) \cong \FS(T, W^s)$ there are induced stability conditions on $\FS(T, W^s)$, which we denote by $(\check{\cA}^s, \check{Z}^{s, \Gamma})$.
\begin{cor}\label{BStabCor} Suppose we are in the situation of Theorem \ref{MainThmIntro}, in dimension $n = 3$. Then
\begin{enumerate}
\item[$(i)$] the special Lagrangian sections $\chL^s_1$, $\chL^s_2 \subset M^s_k$ give well defined objects in $\FS(T, W^s)$, so in particular there are corresponding elements $k^{-1} \chL^s_i \in \operatorname{Num}_{\R}(X)$;
\item[$(ii)$] suppose that the Lagrangian phase angle $\hat{\theta}$ of $\chL^0_i$ is sufficiently close to $0$ or $\frac{\pi}{2}$ from above, and that the sLag smoothing $\chL^s$ of $\chL^0_1 \cup \chL^0_2$ exists. Then the central charges 
\begin{equation*}
\check{Z}^{s, \Gamma}(k^{-1}\chL^s_1), \check{Z}^{s, \Gamma}(k^{-1}\chL^s_2),
\end{equation*}
with respect to the stability conditions $(\check{\cA}^s, \check{Z}^{s, \Gamma})$ on $\FS(T, W^s)$, lie in the upper half plane and satisfy 
\begin{equation*}
\arg Z^{s, \Gamma}(k^{-1}\chL^s_2) < \arg Z^{s, \Gamma}(k^{-1}\chL^s_1).
\end{equation*}
\end{enumerate}
\end{cor}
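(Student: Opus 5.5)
The plan has three steps. First, identify the sLag sections with explicit line bundles under toric mirror symmetry. Second, compute the central charges of these line bundles in the Calabi ansatz coordinates. Third, translate the existence criterion for $\chL^s$, which is a slope inequality for periods, into the inequality for $Z^{s,\Gamma}$.

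\emph{Step 1: the objects $\chL^s_i$.} By Theorem \ref{MainThmIntro}$(iv)$ the sections $\chL^s_i$ define objects of $\FS(T,W^s)$. In the proof of Theorem \ref{MainThmIntro} they arise from the Leung-Yau-Zaslow transform of dHYM connections with Calabi symmetry on line bundles $L_i \to X = \Bl_p\PP^3$. Under the coherent-constructible correspondence they therefore go to the line bundles themselves, up to shift, so $\chL^s_i \mapsto \olo(p_i H - q_i E)$ with $p_i, q_i$ integers proportional to $k$. This proves $(i)$, with $k^{-1}\chL^s_i$ the class of the $\R$-divisor $D_i = k^{-1}(p_iH - q_iE)$, which lies in $\operatorname{Num}_\R(X)$. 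I would also check that the period $\int_{[\chL^s_i]} e^{-W^s_k}\Omega_T$ agrees, via the Iritani/Fang integral structure, with $-\int_X e^{-\ii k\omega_{a,s}}\ch(L_i)\,\hat\Gamma_X$ up to a common factor. Hence the stable locus $(\cF^a_k)^+$ is exactly where
\[
\arg \int_X e^{-\ii\omega_{a,s}}\ch(L_2)\hat\Gamma_X < \arg \int_X e^{-\ii\omega_{a,s}}\ch(L_1)\hat\Gamma_X .
\]

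\emph{Step 2: expand in $k$ and compare.} Rescaling by $k^{-1}$, the leading term of $-\int_X e^{-\ii\omega}\ch(kD_i)$ is $k^3$ times the cubic $-\int_X e^{-\ii\omega + D_i}$. This cubic is precisely the dHYM central charge, whose argument is the Lagrangian phase $\hat\theta$ of $\chL^0_i$. The lower order terms, namely the $\hat\Gamma$-class and the Todd-type corrections, are $O(k^{-1})$ relative to the leading term. The central charge $Z^{s,\Gamma}(k^{-1}\chL^s_i)$ equals this cubic plus the term $\Gamma\cdot D_i$. Explicitly, with $D_i = xH - yE$ and $\omega = aH - bE$, we have $H^3 = 1$, $E^3 = 1$ and $H\cdot E = 0$, so
\[
Z^{s,\Gamma} = -\tfrac{1}{6}\big((x-\ii a)^3 - (y-\ii b)^3\big) + \Gamma\cdot D_i .
\]
I would write down the real and imaginary parts explicitly as functions of $(a,b,x,y)$.

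\emph{Step 3: the phase hypothesis.} If $\hat\theta$ is close to $0^+$ or $(\pi/2)^+$, the imaginary parts of the cubic are strictly positive and dominate the bounded real correction $\Gamma\cdot D_i$. The correction is real, since $C_0$ and $\Gamma$ are real, so it only shifts the real part. The hope is that, in these regimes, adding a common real constant $c_0 = \Gamma\cdot D$ to both charges does not reverse the order of the arguments. This works when both imaginary parts are small positive with the real parts spread out, or when the arguments are near $\pi/2$ and $\arg$ is monotone in a controlled way. Given that the smoothing exists, so $s \in (\cF^a_k)^+$, Step 1 provides the inequality for the uncorrected charges. The perturbation then yields $\arg Z^{s,\Gamma}(k^{-1}\chL^s_2) < \arg Z^{s,\Gamma}(k^{-1}\chL^s_1)$ together with membership in the upper half plane.

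\emph{Main obstacle.} The hardest step is controlling the $\Gamma\cdot\ch_1$ shift, including the $C_0\omega^2$ term, against the difference of arguments near $s = 0$. At $s = 0$ the two phases coincide, so the inequality is infinitesimal. I would compute $\frac{d}{ds}$ of both arguments at $s = 0$ and show that the derivative of the $\Gamma$-corrected difference has the same sign as the uncorrected one when $\hat\theta$ is near $0$ or $\pi/2$. This uses that $\tan\hat\theta \to 0$ or $\infty$ and makes the real shift negligible to first order.
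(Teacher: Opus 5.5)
\textbf{There is a genuine gap in Step 3.} Your Steps 1 and 2 are fine as bookkeeping. Your cubic equals $-\frac{\ii}{6}Z(L)$ with $Z(L)=\int_X(\omega+\ii c_1(L))^3$, which is the correct normalisation of the Bernardara--Macr\`i charge of a line bundle. But Step 3, which carries all the content of $(ii)$, rests on two false premises, and your ``main obstacle'' paragraph targets the wrong quantity.

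First, the correction is not a common constant. Take $p<0$, the regime $\hat\theta\to 0^+$, where $q<0$. In the paper's notation $L_1=-apH+qE$ and $L_2=qE$, one has $\ch_1(k^{-1}\chL^s_1)=\ch_1(L_2[1])=-qE$ and $\ch_1(k^{-1}\chL^s_2)=\ch_1(L_1)=-apH+qE$. So the two real shifts are $-q\,\Gamma\cdot E$ and $-ap\,\Gamma\cdot H+q\,\Gamma\cdot E$, with $\Gamma\cdot E=\frac{1}{6}+C_0$ and $\Gamma\cdot H=\frac{1}{6}+C_0a^2$.

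Second, the correction is neither bounded nor negligible. As $\hat\theta\to 0^+$ one has $a\to\infty$ and $p\to-\sqrt{3}$, so $-ap\,\Gamma\cdot H$ grows like $a^3$, the same order as the cubic. Consequently, at $s=0$ (with $b=1-s$) the uncorrected arguments coincide by Proposition \ref{SlopeInequProp}, but the corrected arguments differ by a zeroth-order amount fixed by the sizes and signs of the two shifts. Comparing $\frac{d}{ds}$ at $s=0$ cannot decide the inequality. If that zeroth-order difference had the wrong sign, the claim would fail for all small $s>0$; if it has the right sign, the derivative is irrelevant.

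\textbf{The missing idea.} This is exactly the paper's argument: divide both charges by $-ap>0$ and use the hypothesis on $\hat\theta$ to separate the two shifts. Since $q/(ap)\to 0$ as $\hat\theta\to 0^+$, the normalised shift on the charge of $\chL^s_1$ is $o(1)$, while that on $\chL^s_2$ is $\Gamma\cdot H+o(1)$. Positivity of $\Gamma\cdot H$ uses the explicit value $C_0=\frac{10\sqrt{30}}{1323}-\frac{3}{98}>0$ from Bernardara--Macr\`i, an input your proposal never invokes.

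Adding a positive real number to a point of the upper half plane keeps it there and strictly lowers its argument. Here it is added precisely to the object that must end up with the smaller argument. The normalised charges and $\Gamma\cdot H$ are both of order $a^2$, so this gives a definite margin. Combined with $\arg Z(L_1)\le\arg Z(L_2[1])$ for $s\ge 0$, this yields the strict inequality. The same margin explains why the result is only one-directional: it persists slightly into the unstable side.

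\textbf{Caution on the other regime.} The paper treats $\hat\theta\to(\pi/2)^+$ symmetrically, normalising by $q>0$ and using $\Gamma\cdot E>0$, but that reduction needs $ap/q\to 0$. From $p^2=3p\tan\hat\theta+3$ one finds $p\approx\hat\theta-\frac{\pi}{2}$, $q\approx\frac{1}{2}(\hat\theta-\frac{\pi}{2})$ and $a\to 1$. So in fact $ap/q\to 2$, and both normalised shifts tend to $\frac{1}{6}+C_0$.

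Moreover, with your (correct) normalisation, the uncorrected charges in that regime have argument $\frac{\pi}{2}-\hat\theta<0$. So your assertion that their imaginary parts are positive near $(\pi/2)^+$ is false. That half of $(ii)$ is not delivered by this mechanism at all.
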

We regard Theorem \ref{BStabCor} as a nontrivial compatibility property with Bridgeland stability satisfied by the construction provided by Theorem \ref{MainThmIntro}. Indeed if the triangle \eqref{FukTriangle} is induced by an exact sequence in the heart $\cA'$ of \emph{any} stability condition $(\cA', Z')$, such that $\widetilde{\chL^s}$ is $(\cA', Z')$-stable, then $Z'(\chL^s_1), Z'(\chL^s_2)$ lie in the semi-closed upper half plane and we have $\arg Z'(\chL^s_2) < \arg Z'(\chL^s_1)$; our result checks these latter, weaker conditions, with respect to the known stability conditions $(\check{\cA}^s, \check{Z}^{s, \Gamma})$ (after scaling $\omega$ and $\chL^s_i$ by $k^{-1}$, which does not change the argument of central charges).  

Theorem \ref{BStabCor} is proved in Section \ref{SlopeInequSec}.
\begin{rmk} It is shown in \cite{J_sLagStability} that semistability with respect to suitable Bridgeland stability conditions implies the sLag condition for sections of mirrors of certain toric threefolds, of the form of $\chL_i$ above. However this result cannot be applied to $(\check{\cA}^s, \check{Z}^{s, \Gamma})$.     
\end{rmk}
\subsection{Momentum mean curvature flow} Building on the work of Thomas and Yau \cite{ThomasYau}, Joyce (\cite{Joyce_SLags}, Section 3.2) conjectured that the mean curvature flow with surgeries starting from a suitable class of Lagrangians, known as Lagrangian branes $\chL$, in a Calabi-Yau manifold, exists for all times and converges to a sum of special Lagrangian integral currents, which is isomorphic to the Harder-Narasimhan decomposition of $\chL$ with respect to a suitable Bridgeland stability condition. We will prove a simple result in our context, motivated by this general expectation.

Fix $X = \Bl_p \PP^n$ endowed with a Calabi-symmetric K\"ahler form $k \omega_{a, s}$ as above. As we recall in Section \ref{FlowSec}, by the work of Chan and Jacob \cite{ChanJacob} on the line bundle mean curvature flow on $X$, there exists a geometric flow in the mirror $(M^s_k, \omega_{M^s, k}, \Omega_{M^s, k})$, which gives a natural analogue of the mean curvature flow adapted to the Calabi ansatz, and is well defined on Lagrangian multi-sections (see \eqref{MCF}); here we call this the \emph{(Chan-Jacob) momentum mean curvature flow}. 
\begin{cor}\label{FlowCor} Suppose we are in the situation of Theorem \ref{MainThmIntro}, in the unstable region $s \in (\cF^a_k)^-$. Fix a Lagrangian $\chL \subset (M^s_k, \omega_{M^s, k}, \Omega_{M^s, k})$, asymptotic to $\chL^s_1 \cup \chL^s_2$ at infinity in $M^s_k$, and contained in the subset of $M^s_k$ bounded by the sections $\chL^s_1$, $\chL^s_2$. Suppose that $\chL$ is a multi-section with a single critical point (such $\chL$ always exist; morally, our assumptions mean that $\chL$ should fit in a triangle \eqref{FukTriangle} in $\FS(T, W^s)$). If the momentum mean curvature flow $\chL_t$ starting from $\chL$ exists for all times $t \in [0, \infty)$, preserves the single critical point condition, and converges smoothly, locally on $M^s_k$, then we must have
\begin{equation*}
\chL_t \to \chL^s_1 \cup \chL^s_2  
\end{equation*}  
as $t \to \infty$, smoothly, locally on $M^s_k$. In particular, in dimension $n = 2$, $\chL_t$ converges to the Harder-Narasimhan decomposition $\chL^s_1 \cup \chL^s_2$, with respect to the stability condition $(\check{\cA}^s, \check{Z}^{s})$, of any nontrivial extension of $\chL^s_1$ by $\chL^s_2$, corresponding to nonzero elements in the nontrivial group $\Ext^1_{\check{\cA}^s}(\chL^s_1, \chL^s_2)$.
\end{cor}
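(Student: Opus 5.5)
The plan is to pass the whole problem through the Leung--Yau--Zaslow transform and reduce it to the Calabi-symmetric line bundle mean curvature flow (LBMCF) of Chan--Jacob on $X = \Bl_p \PP^n$. Under the Calabi ansatz, a Lagrangian multi-section of $M^s_k \to \Delta^o$ with Calabi symmetry is described by a single real profile function $f(\rho)$ of the momentum coordinate $\rho$ on the interval determined by $[k\omega_{a,s}]$. Here $f$ is essentially the derivative of the potential of the mirror connection. Two facts make this useful. First, the sLag condition becomes the ODE for deformed Hermitian Yang--Mills (dHYM) connections with Calabi symmetry studied by Jacob--Sheu. Second, the momentum mean curvature flow \eqref{MCF} becomes the corresponding parabolic equation for $f_t$.

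The sLag sections $\chL^s_1$, $\chL^s_2$ correspond to two explicit solutions $f_1$, $f_2$ of the dHYM ODE, arising from the construction in Sections \ref{MultiSec}--\ref{sLagsSec}. The hypotheses of the statement translate as follows.
\begin{itemize}
\item $\chL$ lies between the sections: $f_2 \le f_0 \le f_1$, or the reverse ordering.
\item $\chL$ is asymptotic to $\chL^s_1 \cup \chL^s_2$: $f_0$ agrees with $f_1$ near one end of the interval and with $f_2$ near the other.
\item The single critical point condition: the multi-section has exactly one fold, i.e. one turning point of the graph.
\end{itemize}

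The key steps, in order, are as follows.

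\emph{Step 1: the limit is a sLag configuration.} Let $f_\infty$ be the smooth local limit. Using the monotonicity of the Lagrangian angle and the gradient-flow structure of \eqref{MCF}, I show that $f_\infty$ is stationary, i.e. special Lagrangian away from the boundary. This requires a standard energy argument: the volume-type functional decreases along the flow, so its time derivative, which is the $L^2$ norm of the angle oscillation, is integrable in $t$.

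\emph{Step 2: preserved bounds.} By the maximum principle, applied to the parabolic equation with $f_1$, $f_2$ as stationary barriers, the limit still lies in the region bounded by $\chL^s_1$ and $\chL^s_2$. The asymptotic conditions at infinity are preserved as well. Since the single critical point condition is preserved by assumption, $f_\infty$ is a Calabi-symmetric configuration with at most one fold.

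\emph{Step 3: rule out a genuine smoothing.} A sLag with a single fold, asymptotic to $\chL^s_1 \cup \chL^s_2$, would be a smoothing of the type constructed in Theorem \ref{MainThmIntro}(ii). Part (iii) of Theorem \ref{MainThmIntro} states that no such Calabi-symmetric sLag multi-section exists for $s \in (\cF^a_k)^-$. Hence the fold must degenerate. Its critical point escapes to the boundary of the momentum interval, or equivalently the neck collapses, and the limit is the union of the two branches.

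\emph{Step 4: identify the branches.} The dHYM ODE with the given asymptotics at each end has unique solutions, and these are exactly $f_1$ and $f_2$. This gives $\chL_t \to \chL^s_1 \cup \chL^s_2$ locally smoothly.

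For $n=2$, the final assertion is formal once Theorem \ref{BStabCorSurf} is available. On $(\cF^a_k)^-$ the nontrivial extensions are unstable. The objects $\chL^s_1$ and $\chL^s_2$ are stable, with $\arg \check Z^s(\chL^s_2) > \arg \check Z^s(\chL^s_1)$, because the central charges match the periods $\int e^{-W}\Omega_T$. Therefore the filtration $\chL^s_2 \subset \widetilde{\chL^s}$ with quotient $\chL^s_1$ is the Harder--Narasimhan filtration.

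The main obstacle is Step 3. The limit is only a local smooth limit, and it need not be an embedded multi-section of the same topological type. I have to exclude a limit that is a single-fold sLag whose fold sits at a boundary stratum, as opposed to genuinely splitting. My first approach is to compare the phases at the fold. A sLag multi-section with an interior fold must have branches of equal constant angle. Integrating the angle against $e^{-W}\Omega_T$ then forces the period inequality defining $(\cF^a_k)^+$, as in the proof of Theorem \ref{MainThmIntro}(iii). This contradicts $s \in (\cF^a_k)^-$.
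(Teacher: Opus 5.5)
Your outline follows the same route as the paper: barriers from the profiles of $\chL^s_1,\chL^s_2$ via the maximum principle, then exclusion of an interior fold, then identification of the limiting branches by uniqueness. The interior fold is excluded because the only Calabi-symmetric sLag multi-section with the prescribed boundary values over $x=a$ comes from the level set $\cC$, which does not fit in $(b,a)\times\R$ for $b>1$. However, Step 3 has a genuine gap.

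After excluding an interior fold you conclude that the critical point ``escapes to the boundary of the momentum interval'' and that the limit is the union of the two branches. The interval $[b,a]$ has two ends, and only one of them gives the claimed limit.
\begin{itemize}
\item If $x_t(s_t)\to b$, the barriers pin the fold at $(b,q)$, where the two barrier profiles meet, and you get $\chL^s_1\cup\chL^s_2$.
\item If instead $x_t(s_t)\to a$, the curve collapses onto the vertical segment over $x=a$ joining the two prescribed boundary values. This segment still lies between the barriers, and the local smooth limit on $M^s_k$ is then empty.
\end{itemize}
Nothing in your Steps 1--4 excludes the second scenario. Your stated main obstacle, a fold sitting at a boundary stratum, is in fact the desired outcome when that stratum is $x=b$. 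The proposed remedy, integrating the angle against $e^{-W}\Omega_T$, does not bear on the direction in which the fold moves. Interior folds are already handled by your Step 3.

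The paper's proof supplies exactly this missing ingredient. It evaluates \eqref{MCF} at the unique critical point, where $x_t'(s_t)=0$, $N_t(s_t)=(-1,0)$ and $u''(x_t(s_t))>0$. It asserts $\kappa_t(s_t)>0$ and $\xi_t(s_t)>0$, and concludes that $x_t(s_t)$ is strictly decreasing, so $x_\infty\in[b,a)$.

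Be careful if you import this step, because the signs do not hold as stated. With the orientation $y_t'(s_t)>0$ and arc-length parametrisation, $x_t$ minimal at $s_t$ gives $\kappa_t(s_t)=-x_t''(s_t)<0$, while $\xi_t(s_t)=x_t/(x_t^2+y_t^2)>0$. The horizontal velocity of the fold is therefore $u''(x_t(s_t))\bigl(x_t''-(n-1)x_t/(x_t^2+y_t^2)\bigr)$, whose sign depends on how sharp the fold is. Indeed, if both terms were positive at every interior fold, the stationary multi-sections $\cL^b$ with $b<1$ could not exist, since they satisfy $\kappa+(n-1)\xi=0$ at their interior fold. So closing Step 3 requires a genuine argument, or an additional hypothesis, preventing the fold from retreating to $x=a$.

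Two smaller points. A multi-section with a fold is not the graph of a single profile $f_0$, so the flow must be run on parametric curves $\gamma_t$ as in \eqref{MCF}. Also, the energy argument in Step 1 is unnecessary: smooth convergence already forces the limit to be a fixed point of \eqref{MCF}, i.e. to have locally constant phase over $(b,a)$.
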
  
Theorem \ref{FlowCor} is proved in Section \ref{FlowSec}. It is known that the momentum mean curvature flow develops finite time singularities (just as the Lagrangian mean curvature flow), see \cite{ChanJacob}, Theorem 1.1. Thus in general one should consider a version of the momentum mean curvature flow with surgeries. See \cite{LotayOliv_MCF} for global results on the Lagrangian mean curvature flow with surgeries in dimension $n = 2$ using the Gibbons-Hawking ansatz.
\subsection{Main result for split Fano bundles}
Fix $r \geq 0$, $m \geq 1$. Let 
\begin{equation*}
X_{r, m} = \PP(\olo_{\PP^m} \oplus (\olo_{\PP^m}(-1))^{\oplus r +1})
\end{equation*}
denote the projective completion of the split vector bundle $(\olo_{\PP^m}(-1))^{\oplus r +1}$ over $\PP^m$. This is Fano for $r < m$. Note that we have $X = \Bl_p \PP^n = X_{0, n-1}$.
 
Define divisor classes $[D_{\infty}]$, $[D_H]$ on $X_{r, m}$ given by $c_1(\olo_{X_{r, m}}(1))$ and by the pullback of the hyperplane class in $\PP^m$. The K\"ahler cone of $X_{r, m}$ is parametrised by
\begin{equation*}
[\omega_{\xi_1, b}] \in \xi_1 [D_H] + b [D_{\infty}],\,\xi_1,\,b > 0.
\end{equation*}

Fix a K\"ahler parameter $\xi_1 > 0$. We define subsets of the K\"ahler cone by
\begin{equation*}
\cF^{\xi_1, b}_k := \{k (\xi_1 +s) [D_H] + k b [D_{\infty}]\!: s \in ( - \varepsilon,  \varepsilon)\} \subset \Ka(X_{r, m}),
\end{equation*}
where $\varepsilon > 0$ is sufficiently small and $k > 0$. 

For each $s \in \cF^{\xi_1, b}_k$ we fix a representative $k \omega_{s, b}$ of the corresponding K\"ahler class $k (\xi_1 +s) [D_H] + k b [D_{\infty}]$, where $\omega_{s, b} := \omega_{\xi_1 + s, b}$ has Calabi symmetry. There is a family of almost Calabi-Yau manifolds $(M^s_{k}, \omega_{M^s, k}, \Omega_{M^s, k})$ mirror to $(X_{r, m}, k\omega_{s, b})$ parametrised by $\cF^b_k$, as well as the family of algebro-geometric mirrors $(T, W^s_k, \Omega_T)$.
\begin{thm}\label{BundlesThmIntro} Suppose the rank $r + 2$ of the split Fano bundle $X_{r, m}$ is even. There is a dense subset of K\"ahler parameters $K \subset (0, \infty)$ such that, for $\xi_1 \in K$, for $b > 0$ sufficiently small and for some sufficiently large $k > 0$, depending on $\xi_1$, we have 
\begin{enumerate}
\item[$(i)$] there exist smooth families of special Lagrangian sections $\chL^s_1$, $\chL^s_2 \subset M^s_k$, parametrised by $s \in \cF^{\xi_1, b}_k$, intersecting (non-transversely) only at infinity in $M^s_k$, such that $\chL^0_1$, $\chL^0_2$ have the same phase $e^{\ii \hat{\theta}}$;
\item[$(ii)$] there exists a smooth family of embedded, graded special Lagrangian submanifolds $\chL^s \subset M^s_k$, parametrised by the well defined, non-empty stable locus 
\begin{equation*}
(\cF^{\xi_1, b}_k)^+ = \{\arg \int_{[\chL^s_2]} e^{-W^s_k} \Omega_T < \arg \int_{[\chL^s_1]} e^{-W^s_k} \Omega_T\} \subset \cF^{\xi_1, b}_k,
\end{equation*}   
which are asymptotic to $\chL^s_1 \cup \chL^s_2$ at infinity in $M^s_k$, and which satisfy
\begin{equation*}
\chL^s \to \chL^0_1 \cup \chL^0_2 
\end{equation*}
smoothly as $s \to 0$, locally on $M^s_k$;
\item[$(iii)$] the special Lagrangians $\chL^s$ are obtained as multi-sections with Calabi symmetry, and with the latter assumptions, there is no extension of the family $\chL^s$ over the well defined, non-empty unstable locus   
\begin{equation*}
(\cF^{\xi_1, b}_k)^- = \{\arg \int_{[\chL^s_2]} e^{-W^s_k} \Omega^s > \arg \int_{[\chL^s_1]} e^{-W^s_k} \Omega^s\} \subset \cF^{\xi_1, b}_k.
\end{equation*}   
\end{enumerate}
\end{thm}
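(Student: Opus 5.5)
We mirror the strategy for Theorem \ref{MainThmIntro}, now on $X_{r,m}$ with Calabi ansatz over the base $\PP^m$ and fibre momentum coordinate. First we reduce everything to a one-variable ODE. A Calabi-symmetric $\omega_{s,b}$ is determined by a convex function of the fibre momentum $\tau \in [\xi_1+s, \xi_1+s+b]$, or after normalisation an interval of length $b$. Via the Leung-Yau-Zaslow transform, Calabi-symmetric Lagrangian (multi-)sections of $M^s_k \to \Delta^o$ correspond to Calabi-symmetric connections on line bundles over $X_{r,m}$. The sLag equation becomes the dHYM equation, which, following Jacob--Sheu, reduces to a first-order ODE for a profile function $f(\tau)$. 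The eigenvalues of the curvature relative to $k\omega_{s,b}$ are $f'(\tau)$ with multiplicity one, $f(\tau)/\tau$ with multiplicity $r$ (from the $\olo(-1)^{\oplus r+1}$ directions), and $(f(\tau)+\lambda)/(\tau)$-type terms with multiplicity $m$ from the base. The special Lagrangian condition $\sum \arctan \lambda_i = \hat\theta$ is then integrated explicitly as $\Imm\big(e^{-\ii\hat\theta}\,\tau^{r}(\tau+\cdot)^m (1+\ii f/\tau)^{r}\cdots\big)$ being constant. This gives a level set of a polynomial in $(\tau, f)$.

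Next we construct $\chL^s_1,\chL^s_2$ as two sections, i.e. two solutions $f_1,f_2$ corresponding to line bundles $L_1,L_2$ of degrees fixed by the boundary conditions at $\tau=\xi_1+s$ and $\tau=\xi_1+s+b$. We choose $\xi_1$ in a dense set $K$ so that at $s=0$ the two relevant phases coincide, which is a rationality/density condition on the degree data after scaling by $k$. Here the parity assumption enters: with $r+2$ even, the leading term in $k$ of the difference of the two phase functions has a nonvanishing $s$-derivative, making the stable and unstable loci non-empty open half-intervals. Smallness of $b$ lets us expand to first order in $b$, where the level curves are close to explicit ones. The periods $\int_{[\chL^s_i]} e^{-W^s_k}\Omega_T$ are identified, as in Section \ref{IntegrationSec}, with the central charges $-\int e^{-\ii k\omega_{s,b}}\ch(L_i)$ up to $\Gamma$-class corrections negligible for $k\gg1$. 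Thus the stability inequality becomes an inequality of the arguments of these integrals, i.e. of $\int_{\tau}$ of the ODE integrand.

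Then we build the smoothing. Over $(\cF)^+$ we look for a multi-section given by a level curve of the same polynomial at the phase $\hat\theta^s$ of $\int_{[\chL^s_1]+[\chL^s_2]}$. This curve should pass through both boundary conditions, be two-valued over the interior, and have a single critical point (fold) where $f'$ blows up. The existence of the fold point with phase $\hat\theta^s$ strictly between $\hat\theta^s_2$ and $\hat\theta^s_1$ is an intermediate value argument on the conserved quantity. Convergence to $\chL^0_1\cup\chL^0_2$ as $s\to0$ follows because the fold point escapes to the boundary of the interval as the phases coalesce. For (iii), any Calabi-symmetric sLag multi-section asymptotic to $\chL^s_1\cup\chL^s_2$ must lie on such a level curve. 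Evaluating the conserved quantity at the two ends and at the fold forces the sign of $\hat\theta^s_1-\hat\theta^s_2$, so none exists on $(\cF)^-$.

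The main obstacle is the second step: controlling the polynomial level set globally, so that it is genuinely a connected embedded two-sheeted curve with exactly one fold and no extra branches, and verifying the sign of the $s$-derivative of the phase difference. I would first try to settle this in the regime $b\to0$, $k\to\infty$, where the curve is a perturbation of an explicit conic-like model. The parity condition would appear as the sign of $(-1)^{(r+2)/2}$-type coefficients.
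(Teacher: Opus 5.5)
There are genuine gaps, beginning with $(i)$, which your outline never actually constructs. In the paper (following Jacob), the first integral is that $\Imm e^{-\ii\hat\theta}P_\xi(x+\ii f(x))$ is constant along the profile. Here $P_\xi(z)=\int_0^z(\xi+s)^m s^r\,ds$, $\xi=\xi_1+\ii\xi_2=\rho e^{\ii\psi}$, and $x\in[0,b]$ is the shifted fibre momentum with $f(0)=0$. In this coordinate the eigenvalues are $f'$, then $f/x$ with multiplicity $r$, and $(\xi_2+f)/(\xi_1+x)$ with multiplicity $m$; your fibre eigenvalue $f/\tau$ is off.

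The decisive choice is the analytic branch $\cB$ of $\{\Imm e^{-\ii\hat\theta}P_\xi=0\}$ through $z=0$ with \emph{vertical tangent} there. This forces $\hat\theta\equiv m\psi+\frac{r+1}{2}\pi$ modulo $\pi$. The line bundles $L_1,L_2$ are read off from $\cB\cap\{x=b\}=\{b+\ii q,\,b+\ii q'\}$, and contact at infinity with vertical tangent is then built in. The dense set $K$ comes only from the commensurability $q/\xi_2,\,q/q'\in\Q$ needed for $kL_i$ to be genuine line bundles; it is not a condition making two phases coincide.

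Nor does parity enter as a sign like $(-1)^{(r+2)/2}$. Jacob's lifted angles are $\vartheta_1=m\psi+\frac{r+1}{2}\pi$ and $\vartheta_2=\vartheta_1-(r+1)\pi$. So for $r$ even, $Z(L_1)$ and $Z(L_2)$ are antiparallel at $s=0$, and $L_1$ must be compared with $L_2[1]$ (or $L_2$ with $L_1[1]$). This is what fixes $(\chL^s_1,\chL^s_2)$ and gives $\chL^0_1,\chL^0_2$ the same phase.

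The slope inequality is then made explicit by Jacob's identity $Z(L)=\frac12\vol(S_{m,r})\,P_\xi(b+\ii q_L)$, which your proposal only gestures at. Your passage from periods to central charges via Iritani's asymptotics for $k\gg1$ does match the paper. When you differentiate in $s$, note that with $P_\xi(z)\approx\xi^m z^{r+1}/(r+1)$ the leading term of $\del_{\xi_1}\arg P_\xi$ is $\Imm(m/\xi)$ at \emph{both} $b+\ii q$ and $b+\ii q'$. It therefore cancels in the phase difference, and the sign is decided only at the next order.

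For $(ii)$--$(iii)$ your route differs from the paper's, which asserts that the fixed branch $\cB$, viewed in the rescaled variable $z=\xi_1\tilde z$, yields the multi-section for $\xi'_1<\xi_1$. With Jacob's identity your bookkeeping becomes precise. A Calabi-symmetric multi-section ending at $b+\ii q$ and $b+\ii q'$ must satisfy $\Imm e^{-\ii\hat\theta^s}\bigl(P_\xi(b+\ii q)-P_\xi(b+\ii q')\bigr)=0$. So its phase is that of $Z(L_1)+Z(L_2[1])$, and its level $c^s=\Imm e^{-\ii\hat\theta^s}P_\xi(b+\ii q)$ has the sign of the slope inequality. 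For $r=0$, $z=0$ is a regular point of $P_\xi$ and $x|_{\cB}$ has a nondegenerate minimum at $0$. There that sign decides whether the nearby level curve folds inside $\{x>0\}$ or exits through $x=0$, which is a clean mechanism for $(ii)$--$(iii)$.

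But existence is exactly the step you leave open; the ``intermediate value argument at the fold'' is not an argument. Moreover, for even $r\geq2$ your planned perturbative model does not exist. The point $z=0$ is a zero of order $r+1$ of $P_\xi$, so near it $\{\Imm e^{-\ii\hat\theta}P_\xi=0\}$ consists of $r+1$ smooth arcs crossing at $0$. After rescaling $z=|c|^{1/(r+1)}\zeta$, one sees that level curves with small $c\neq0$ only join \emph{adjacent} rays. The two halves of $\cB$ are separated in $\{x>0\}$ by $r$ further rays. Hence no connected level curve converging to $\chL^0_1\cup\chL^0_2$ on compact subsets can run from $b+\ii q$ to $b+\ii q'$.

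The paper's argument does not analyse this local structure at $z=0$ either, so the gap cannot be closed by following it. As written, your proposal does not establish the existence part of $(ii)$.
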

Theorem \ref{BundlesThmIntro} is proved in Section \ref{BundlesSec}.
\begin{rmk} $X = \Bl_p \PP^n = X_{0, n-1}$ is obtained as a particular split Fano bundle, but one can check that Theorem \ref{BundlesThmIntro} implies Theorem \ref{MainThmIntro} only for very large values of the K\"ahler parameter $a > 1$ appearing in the latter.    
\end{rmk}

\noindent{\textbf{Acknowledgements.}} I am grateful to Soheyla Feyzbakhsh, Joel Fine, Sohaib Khalid, Jason Lotay, Helge Ruddat, Saverio Secci, Nicol\`o Sibilla and Richard Thomas for conversations related to the present work.

\section{Lagrangians with Calabi symmetry}\label{CalabiSec}
Let $(M, \omega_M, \Omega_M)$ be the almost Calabi-Yau manifold obtained from a toric Fano manifold $X$, endowed with a torus-invariant K\"ahler form $\omega$, by the construction recalled above in Section \ref{MirrorIntroSec}. 

There is a fundamental correspondence between special Lagrangian sections $\cL \subset M$ of $M \to \Delta^o$, with phase $e^{\ii \hat{\theta}}$, and Hermitian holomorphic line bundles $(L, h) \to X$, such that $h$ satisfies the \emph{deformed Hermitian-Yang Mills (dHYM)} equation
\begin{equation*}
\Imm e^{-\ii\hat{\theta}} (\omega + F(h))^n = 0,
\end{equation*}
where $F(h)$ denotes the curvature of the Chern connection. This is due to Leung-Yau-Zaslow \cite{LeungYauZaslow} (see also \cite{CollinsYau_momentmaps_preprint}, Section 9). Using the notation introduced in Section \ref{MirrorIntroSec}, the correspondence is given by
\begin{equation*}
\tilde{\theta}_i(y) = u^{ij} \del_{y_j} \log(h).
\end{equation*}
We refer to this correspondence as \emph{Leung-Yau-Zaslow (LYZ) transform}.

Jacob and Sheu \cite{JacobSheu} studied the general dHYM equation 
\begin{equation*}
\Imm e^{-\ii \hat{\theta}}(\omega + \ii \alpha)^n = 0
\end{equation*}
on $X = \Bl_p \PP^n$, to be solved for a closed $(1, 1)$-form $\alpha$, using the well known Calabi ansatz
\begin{equation*}
\omega = \ii \del \delbar u(\log |z|^2),\,\alpha = \ii \del\delbar v(\log|z|^2)\textrm{ on } \C^n \setminus \{ 0 \} \cong X\setminus (H \cup E).
\end{equation*}
\begin{rmk}
This turns out to be closely related to classical examples of Harvey and Lawson, see \cite{HarveyLawsonCalibrated}, Section III.3.B and our Lemma \ref{HarLawLem}.
\end{rmk}
Suppose the cohomology class of the K\"ahler form is fixed as
\begin{equation*}
[\omega] = a H - b E,\, a > b >0.
\end{equation*}
Regarding $X\setminus (H \cup E)$ as the complement of the zero section in the total space of $\olo(-1) \to \C^{n-1}$, the moment map with respect to $\omega$ of the $U(1)$-action rotating the fibres is given by
\begin{equation*}
u'\!: X\setminus (H \cup E) \to (b, a).
\end{equation*}   
In the following we will sometimes write $\omega_{a, b}$ to emphasise the K\"ahler parameters. We let $M_{a, b} \subset (\C^*)^n$ denote the almost Calabi-Yau bounded open domain mirror to $(X, \omega) = (\PP^n, \omega_{a,b})$. It is endowed with a sLag fibration $M_{a, b} \to \Delta^o_{a, b}$ over the momentum polytope of $(X, \omega_{a, b})$.

We can express the quantity $v'(\log|z|^2)$ in the corresponding symplectic coordinates 
\begin{equation*}
x = u'(\log|z|^2)
\end{equation*}
as
\begin{equation}\label{LegendreVtransform}
f(x) = v'(\log|z|^2).
\end{equation} 
Then the boundary conditions on $f(x)$ are given by
\begin{equation}\label{BdryCond}
\lim_{x \to b^+} f(x) = q,\,\lim_{x \to a^-} f(x) = p,
\end{equation}
where the cohomology class of $\alpha$ is fixed as
\begin{equation*}
[\alpha] = p H - q E.
\end{equation*}

As explained in \cite{JacobSheu}, Section 2, the dHYM equation is equivalent to the ODE in symplectic coordinates
\begin{equation*}
\Imm e^{-\ii \hat{\theta}} \left(1 + \ii x^{-1} f(x)\right)^{n-1} (1+ \ii f'(x)) = 0,
\end{equation*}
where $f \in C^{\infty}(b, a)$ satisfies the boundary conditions \eqref{BdryCond}. This ODE is \emph{exact}, namely, equivalent to the condition
\begin{equation}\label{dHYM}
\Imm e^{-\ii \hat{\theta}} \left(x + \ii f(x)\right)^{n} = c \in \R.
\end{equation} 
Thus, solutions of the dHYM equation with $[\alpha] = p H - q E$ correspond precisely to graphical portions of the level sets of a harmonic polynomial,  
\begin{equation*}
\Imm e^{-\ii \hat{\theta}} \left(x + \ii y\right)^{n} = c  
\end{equation*} 
lying over $[b, a]$, with  
\begin{equation*}
\Imm e^{-\ii \hat{\theta}} \left(a + \ii p\right)^{n} = \Imm e^{-\ii \hat{\theta}} \left(b + \ii q\right)^{n} = c.  
\end{equation*}

If $p, q \in \Z$, then we have $[\alpha] = - c_1(L)$, $\alpha = -\ii F(h)$ for some Hermitian holomorphic line bundle $(L, h) \to X$, so composing the (inverse) Legendre transform of $f$ with the Leung-Yau-Zaslow transform of $(L, h)$ shows that a function  $f \in C^{\infty}(b, a)$ (the \emph{momentum profile}) satisfying the boundary conditions \eqref{BdryCond} corresponds to a Lagrangian section $\cL$ of  $M_{a,b} \to \Delta^o_{a, b}$.

Similarly, for $b < b' < a$, a function $f \in C^{\infty}(b', a)$ satisfying the boundary condition
\begin{equation*} 
\lim_{x \to a^-} f(x) = p
\end{equation*}
corresponds to a Lagrangian section $\cL'$ of $M_{a,b} \to \Delta^o_{a,b}$ with boundary on a special Lagrangian fibre.

We refer to $\cL$, $\cL'$ as \emph{Lagrangian sections} (with boundary) \emph{with Calabi symmetry}. If \eqref{dHYM} holds (for some $c \in \R$) these are \emph{special Lagrangian sections} (with boundary) \emph{with Calabi symmetry}.
\begin{definition}\label{IntersectionDef} Let $\cL_1, \cL_2 \subset M_{b, a}$ denote Lagrangian sections with Calabi symmetry. We say that $\cL_1, \cL_2$ intersect non-transversely at infinity in $M_{a, b}$ if the corresponding momentum profiles $f_1, f_2$ intersect at $(b, q)$ with vertical tangent.   
\end{definition}
More generally, if $p, q \in \Q$, then $f$ as above still corresponds to a sLag $\cL$ (or $\cL'$) \emph{after rescaling} 
\begin{align*}
\omega_{a, b} \mapsto k \omega_{a, b} = \omega_{k a, k b},\, [\alpha] \mapsto k [\alpha] = k p H - k q E 
\end{align*}
for some $k > 0$, since the dHYM equation is invariant under this common rescaling.  

\begin{lemma}\label{HarLawLem}
A level set $\{\Imm e^{-\ii \hat{\theta}} \left(x + \ii y\right)^{n} = c\}$ contained in $\{ x > b \}$ corresponds to a special Lagrangian multi-section $\hat{\cL} \subset M_{a, b}$ which can be decomposed as
\begin{equation*}
\hat{\cL} = \overline{\cL'_{1}} \cup \overline{\cL'_{2}} 
\end{equation*} 
where $\cL'_{i}$ are Calabi Lagrangians with boundary, with Calabi symmetry, given by momentum profiles $f_i \in C^{\infty}(b', a)$ for some $b' \in (b, a)$. 
\end{lemma}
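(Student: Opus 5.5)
Write the level set in polar form. With $x + \ii y = r e^{\ii\phi}$, the curve $\{\Imm e^{-\ii\hat\theta}(x+\ii y)^n = c\}$ becomes $r^n \sin(n\phi - \hat\theta) = c$. For $c \neq 0$, each connected component is a smooth curve lying in a sector $\{\phi : n\phi - \hat\theta \in (k\pi, (k+1)\pi)\}$ of opening $\pi/n$. It is asymptotic to the two boundary rays of that sector and is parametrised by $r = (c/\sin(n\phi-\hat\theta))^{1/n}$. (For $c = 0$ the components are rays, and I will treat this degenerate case separately or exclude it, since such a ray can only lie in $\{x>b\}$ if it is horizontal.)

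The plan has three steps.

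\textbf{Step 1: shape of the component.} The component is contained in $\{x > b\}$, so its sector lies in the right half plane $\{x>0\}$ up to boundary, and its asymptotic rays point into $\{x > 0\}$. Hence $x \to +\infty$ along both ends. Restricted to the curve, $x = r(\phi)\cos\phi$ is a proper function bounded below. I will show it has exactly one critical point, a nondegenerate minimum at some $b' > b$ where the tangent is vertical.
\begin{itemize}
\item A vertical tangent means $\partial_y \Imm e^{-\ii\hat\theta}(x+\ii y)^n = \Rea\, n e^{-\ii\hat\theta}(x+\ii y)^{n-1} = 0$.
\item On the curve this says $\cos((n-1)\phi - \hat\theta) = 0$. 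For $\phi$ ranging over an interval of length $\pi/n < \pi/(n-1)$, this has at most one solution.
\item Existence of the solution follows because $x$ attains a minimum on the curve.
\end{itemize}
Consequently the curve is the union of two graphs $y = f_1(x)$ and $y = f_2(x)$ over $[b', \infty)$, with $f_1 < f_2$ on $(b',\infty)$ and $f_1(b') = f_2(b')$. Both are smooth on $(b',\infty)$ by the implicit function theorem, since $\partial_y \neq 0$ away from the critical point.

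\textbf{Step 2: each branch is a sLag with boundary.} Assume $b' < a$; otherwise the portion over $(b,a)$ is empty and the statement is vacuous or must be excluded. Restrict $f_i$ to $(b', a)$. Each $f_i$ satisfies \eqref{dHYM} with the given $c$, and it has a finite limit $p_i = f_i(a)$ at $x = a$. By the discussion preceding Definition \ref{IntersectionDef}, such a profile on $(b',a)$ corresponds to a special Lagrangian section with boundary $\cL'_i$ with Calabi symmetry. This uses the LYZ transform composed with the inverse Legendre transform, after a rescaling by $k$ if $p_i \in \Q$; in general the correspondence $\tilde\theta = $ (derivative data of $v$) is local and does not need integrality, provided we allow the section to close up over the facet $x = a$ appropriately. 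The boundary of $\cL'_i$ lies on the sLag torus fibre over the hypersurface $x = b'$ in $\Delta^o_{a,b}$, or on the corresponding $U(1)$-orbit family.

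\textbf{Step 3: gluing along the common boundary.} Since $f_1(b') = f_2(b')$, the closures $\overline{\cL'_1}$ and $\overline{\cL'_2}$ share the same boundary. Their union is the image under the (Legendre+LYZ) correspondence of the full curve. Near the vertical tangency, parametrise by $y$ instead of $x$: the curve is a smooth graph $x = g(y)$ with $g$ having a nondegenerate minimum. Since $\theta$ is expressed through $f$, and the sLag condition is the exact ODE \eqref{dHYM}, this local description shows that $\hat\cL$ is a smooth embedded submanifold across the seam, hence a special Lagrangian multi-section (a two-sheeted multi-section over $x > b'$ folding at $x = b'$).

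\textbf{Main obstacle.} The hardest part is this last smoothness across the fold. The LYZ map involves $u^{ij}\partial_{y_j}$ of the potential, so I must check that in the Calabi coordinates the fibre coordinate $\tilde\theta$ depends smoothly on the curve's arclength parameter even where $f'$ blows up. I would first try working directly with the Harvey--Lawson type description, writing the Lagrangian as a $U(1)$-invariant graph over the curve in the $(x, f)$-plane, which is manifestly smooth in a curve parameter.
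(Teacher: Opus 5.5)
Your Steps 1 and 2 are sound. They are more explicit than the paper, which simply takes for granted that the component splits into two graphs over $(b',a)$. Your observation that $\cos((n-1)\phi-\hat\theta)=0$ has at most one solution on a sector of width $\pi/n<\pi/(n-1)$ is a clean way to get the single fold. (The case $c=0$ is just vacuous: that level set is a union of lines through the origin, so not even the horizontal ray lies in $\{x>b\}$.)

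At the fold your approach differs from the paper's. The paper does no computation there. It invokes Harvey--Lawson's $SO(n)$-invariant special Lagrangian in $\C^n$ built on the same planar curve. It then asserts that, in holomorphic normal coordinates at a fold point, $\hat{\cL}$ is asymptotically close to that model near a critical point of the projection $\C^n\to\R^n$.

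The real gap is your Step 3. Parametrising the curve by $y$ and appealing to the exactness of the ODE does not by itself show that $\hat{\cL}$ is smooth across the seam, and you leave the actual check open. What is missing is the explicit LYZ transform under the Calabi ansatz.

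\begin{itemize}
\item In logarithmic coordinates $\rho_i=\log|z_i|^2$ one has $u^{ij}\del_{y_j}=\del_{\rho_i}$. So the section is the $\rho$-gradient of the connection potential $v(\rho)$, where $\rho=\log\sum_j e^{\rho_j}$.
\item Since $y_i=u'(\rho)e^{\rho_i-\rho}$, one gets $\sum_i y_i=x$ and $\tilde\theta_i=v'(\rho)e^{\rho_i-\rho}=f(x)\,y_i/x$, up to sign and normalisation conventions.
\end{itemize}

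This involves only the value of $f$, never $f'$, so your worry about $f'$ blowing up goes away. If $s\mapsto(x(s),t(s))$ is a regular parametrisation of the curve, then $\hat{\cL}$ is the image of
\begin{equation*}
(s,\sigma)\mapsto(\tilde\theta,y)=\big(t(s)\,\sigma,\;x(s)\,\sigma\big),\qquad \sigma\in\Sigma=\{\sigma_i>0,\ \textstyle\sum_i\sigma_i=1\}.
\end{equation*}

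This map is an immersion even where $x'(s)=0$. Take a vanishing combination $\lambda\,\del_s+\del_{\delta\sigma}$ with $\sum_i\delta\sigma_i=0$:
\begin{itemize}
\item summing the $y$-components gives $\lambda x'=0$;
\item then $x>0$ forces $\delta\sigma=0$;
\item finally the $\tilde\theta$-components give $\lambda t'=0$, and $(x',t')\neq 0$ forces $\lambda=0$.
\end{itemize}

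The closed conditions $\omega_M|_{\hat{\cL}}=0$ and $\Imm e^{-\ii\hat\theta}\Omega_M|_{\hat{\cL}}=0$ hold off the fold by Step 2, hence everywhere by continuity.

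This is the exact mirror-side analogue of Harvey--Lawson's $\{\zeta\,\omega:\zeta\in\gamma,\ \omega\in S^{n-1}\}$, with the simplex replacing the sphere. In particular, the fibre over a point of the curve is an $(n-1)$-simplex, not a $U(1)$-orbit, so your ``$U(1)$-invariant graph'' is not the right structure.

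Completed this way, your route gives a self-contained proof. It replaces the paper's comparison with the flat Harvey--Lawson model by an exact computation.
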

We refer to such $\hat{\cL}$ as a \emph{sLag multi-section with Calabi symmetry}.
\begin{proof} According to Harvey and Lawson \cite{HarveyLawsonCalibrated}, Section III.3.B, the locus $\{\Imm e^{-\ii \hat{\theta}} \left(x + \ii y\right)^{n} = c\} \subset (b, a) \times \R$ is contained in a unique special Lagrangian submanifold $\widetilde{\cL} \subset \C^n \cong \R^n \times \R^n$, invariant under the diagonal action of $SO(n)$, where we regard $(b, a) \times \R$ as contained in the first complex axis plane $\times\{0\}\subset \C \times \C^{n-1}$.     

Clearly $\hat{\cL} := \overline{\cL'_{1}} \cup \overline{\cL'_{2}} \subset M_{a, b}$ is a Lagrangian submanifold except perhaps along the critical locus $\overline{\cL'_{1}} \cap \overline{\cL'_{2}}$, and  by choosing holomorphic normal coordinates for $\omega_{M_{a, b}}$ at a point $p \in \overline{\cL'_{1}} \cap \overline{\cL'_{2}}$ we find that $\hat{\cL}$ is asymptotically close, near $p$, to Harvey-Lawson's sLag $\widetilde{\cL}\subset \C^n$ nearby a critical point of the projection $\C^n \to \R^n$. Thus $\hat{\cL}$ is in fact a special Lagrangian submanifold at all points.    
\end{proof}
\begin{rmk} The Lagrangian phase angle $\hat{\theta} \in \R$ of $\cL'_i$, $\hat{\cL}$ is only determined by the level set up to integer multiples of $\pi$. Recall that in general if $\cL$ has Lagrangian phase angle $\hat{\theta} \in \R$ then $\cL[1]$ has Lagrangian phase angle $\hat{\theta} + \pi$. 
\end{rmk}

\section{Construction of distinguished sLag multi-sections $\cL^b$}\label{MultiSec}
\subsection{Construction of $\cL^b$} We consider the case when $X = \Bl_p \PP^n$ for all $n$. We endow $X$ with a smooth family of K\"ahler forms $\omega_{a, b}$, with Calabi symmetry, such that $[\omega_{a, b}] = a H - b E$. Let us denote by $M_{a, b} \subset (\C^*)^n$ its almost Calabi-Yau mirror. 

In the present Section, for a fixed K\"ahler parameter $a \in (1, \infty)$, we construct a distinguished smooth family of sLag multi-sections $\cL^b \subset M_{a, b}$ parametrised by $b \in (0, 1)$. The key insight is that $\cL^b$ is \emph{constant} in momentum coordinates.  

SLag multi-sections of Lagrangian phase angle $\hat{\theta}\in \R$, with Calabi symmetry with respect to $\omega_{a, b}$, correspond, in the momentum coordinate $x$, to the connected components $\cC$ of the level sets 
\begin{equation*}
\{\Imm e^{-\ii \hat{\theta}} (x + \ii y)^n = c\} \subset (b, a) \times \R. 
\end{equation*}

This holds at least if the boundary conditions satisfy a suitable rationality assumption; we will check that this can be achieved in our construction in Section \ref{ScalingSec} below. We will restrict to phase angles $\hat{\theta} \in (-\pi, \pi)$. 
\begin{exm} For $n =3$, $\cC$ is a connected component of the real cubic
\begin{equation*}
\{\-x\sin (\hat{\theta}) \left(x^2-3 y^2\right)- y \cos (\hat{\theta}) \left(y^2-3 x^2\right) = c\} \subset \R^2.
\end{equation*}
\begin{figure}[h]
  \includegraphics[width=5cm]{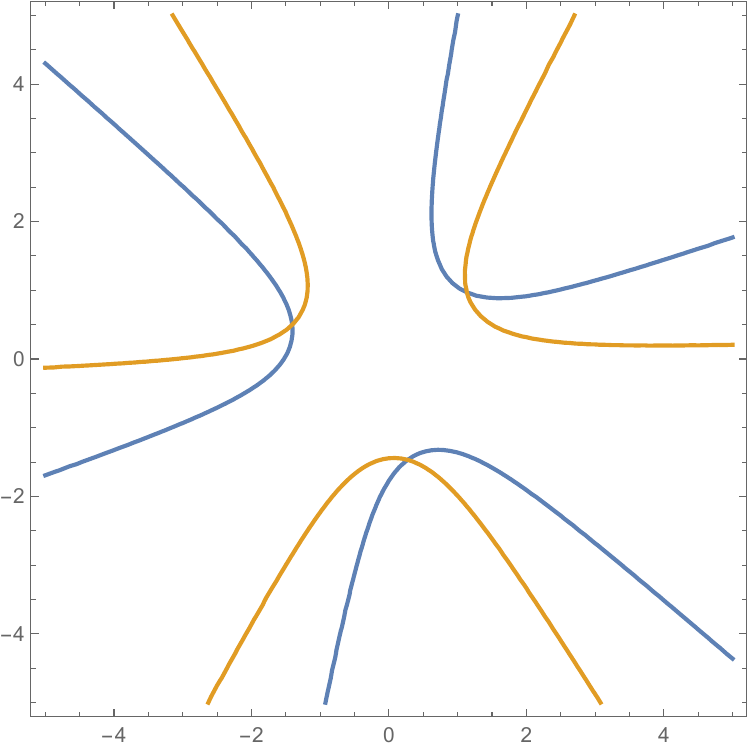}
  \caption{Some level sets $\cC$.}
  \label{fig:LevelSets}
\end{figure}
Figure \ref{fig:LevelSets} shows the cubic for $c = 3$ and $\hat{\theta} = 1$, $\hat{\theta} = 0.1$.
\end{exm}
\begin{lemma}\label{ConstructionLem1} There are explicit choices of $c \in \R$ such that the critical value of the corresponding sLag sections $\cL$ with Calabi symmetry, with phase angle $\hat{\theta} \in (-\pi, \pi)$, in momentum coordinates, is given by $x = 1$.  
\end{lemma}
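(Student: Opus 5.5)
The plan is to locate the critical points directly. Set $F(x,y) = \Imm e^{-\ii\hat{\theta}}(x+\ii y)^n$ and write $z = x + \ii y$. A critical point of the multi-section $\cL$ with respect to the momentum projection is a point of a level set $\{F = c\}$ where the curve has vertical tangent. This is the situation in Lemma \ref{HarLawLem} and Definition \ref{IntersectionDef}, where the two branches $f_1, f_2$ meet. Such a point is characterised by $\del_y F = 0$ and $\del_x F \neq 0$. Since $F$ is the imaginary part of a holomorphic function, I will compute both partial derivatives through $z$:
\begin{equation*}
\del_y F = n\,\Rea\big(e^{-\ii\hat{\theta}} z^{n-1}\big),\qquad \del_x F = n\,\Imm\big(e^{-\ii\hat{\theta}} z^{n-1}\big).
\end{equation*}

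\textbf{Step 1 (the rays).} Writing $z = r e^{\ii\psi}$ with $r > 0$, the condition $\del_y F = 0$ becomes $\cos((n-1)\psi - \hat{\theta}) = 0$. The vertical-tangent locus is therefore a finite union of rays through the origin,
\begin{equation*}
\psi_k = \frac{\hat{\theta} + \frac{\pi}{2} + k\pi}{n-1},\qquad k \in \Z.
\end{equation*}
On these rays $\sin((n-1)\psi_k - \hat{\theta}) = (-1)^k$, so $\del_x F = \pm n r^{n-1} \neq 0$. Hence each such point is a genuine smooth point of the level curve with vertical tangent, and not a singular point of the level set.

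\textbf{Step 2 (choosing $c$).} To place the critical point at $x = 1$, I will pick $k$ with $\cos\psi_k > 0$. This is always possible for $\hat{\theta} \in (-\pi, \pi)$ and $n \ge 2$: the angles $\psi_k$ are spaced by $\pi/(n-1) \le \pi$, so at least one of them lies in $(-\pi/2, \pi/2)$. The ray then meets $\{x = 1\}$ at $(1, \tan\psi_k)$. Using $n\psi_k - \hat{\theta} = \psi_k + \frac{\pi}{2} + k\pi$, I get the explicit value
\begin{equation*}
c_k = F(1, \tan\psi_k) = \sec^n\psi_k \,\sin\big(n\psi_k - \hat{\theta}\big) = (-1)^k \sec^{n-1}\psi_k .
\end{equation*}
This is the explicit choice of $c$ in the statement.

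\textbf{Step 3 (local shape and the main check).} I will verify that the connected component $\cC$ through $(1, \tan\psi_k)$ really is of the form required in Lemma \ref{HarLawLem}. Concretely, near the critical point it should be the union of two graphs $f_1, f_2$ over an interval $(1, 1+\delta)$, turning back on one side only.

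For this I will compute the curvature of the level curve at the critical point, i.e. the sign of $x''(y)$ along $\{F = c_k\}$. By implicit differentiation,
\begin{equation*}
x''(y) = -\frac{\del_y^2 F}{\del_x F}.
\end{equation*}
Here $\del_y^2 F = -n(n-1)\Imm\big(e^{-\ii\hat{\theta}} z^{n-2}\big)$, which on the ray equals $\mp n(n-1) r^{n-2}\cos\psi_k$. I expect this to give $x''(y) > 0$, so the curve opens towards $x > 1$ and the multi-section lives over $\{x > 1\}$ near its critical point.

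Ruling out a degenerate (inflectional) vertical tangent is the only delicate point, and it amounts exactly to $\cos\psi_k \neq 0$, which the choice in Step 2 guarantees. The remaining requirement of Lemma \ref{HarLawLem}, that the whole component stays in $\{x > b\}$ for $b < 1$, should follow from the fact that $x = 1$ is the minimum of $x$ along $\cC$ by this convexity together with the ray analysis. The rationality of boundary values is deferred to Section \ref{ScalingSec}, as stated.
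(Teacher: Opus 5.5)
Your Steps 1--2 are exactly the paper's argument: it also sets $x=1$, solves $\del_y F = 0$, $F = c$ in polar form $1+\ii y = \rho e^{\ii\psi}$, and obtains the same rays (your $k$ is its $m-1$), the same value $c = (-1)^k\sec^{n-1}\psi_k$ and the same critical point $\tan\psi_k$. Your Step 3 is extra verification that the paper omits, and your expectation is correct: it gives $x''(y) = (n-1)\cos\psi_k/r > 0$. One small correction: your claim that some $k$ with $\cos\psi_k>0$ always exists fails for $n=2$, $\hat{\theta}=0$. There the rays are at $\pm\frac{\pi}{2}$, and indeed $F=2xy$ has no vertical tangent at $x=1$; the paper's formula, which is undefined in that case, silently excludes it as well.
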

\begin{proof}
The critical values $x$ and critical points $y$ are solutions of
\begin{equation*}
\del_y \Imm e^{-\ii \hat{\theta}} (x + \ii y)^n = 0,\,\Imm e^{-\ii \hat{\theta}} (x + \ii y)^n = c.
\end{equation*}

Solving for $y$, $c$ we find
\begin{equation*}
\Imm e^{-\ii \hat{\theta}} \ii n (1 + \ii y)^{n-1} = 0,\,\Imm e^{-\ii \hat{\theta}} (1 + \ii y)^n = c.
\end{equation*}
Writing $1 + \ii y = \rho e^{\ii \psi}$, we have
\begin{equation*}
\rho = (1 + y^2)^{\frac{1}{2}},\,\cos(\psi) = (1 + y^2)^{-\frac{1}{2}},\,\sin(\psi) = y(1 + y^2)^{-\frac{1}{2}} 
\end{equation*}
and we need to solve
\begin{equation*}
\Imm e^{\ii ((n-1)\psi + \frac{\pi}{2} - \hat{\theta})} = 0,\,\Imm \rho^n e^{\ii ( n \psi - \hat{\theta})} = c 
\end{equation*}
for $\psi$, $c$. This gives
\begin{align*}
& \psi = \frac{\hat{\theta} - \frac{\pi}{2} + m \pi}{n-1},\,m = 1 - n,\, \ldots,\,n-2,
\end{align*}
as well as
\begin{align*}
& c = c_m = \rho^n \sin\left(n \psi - \hat{\theta}\right) = (\cos(\psi))^{-n} \sin\left(n \psi - \hat{\theta}\right)\\
&= \frac{\sin\left(\frac{n}{n-1}(\hat{\theta} - \frac{\pi}{2} + m \pi) -\hat{\theta}\right)}{\left(\cos\left( \frac{\hat{\theta} - \frac{\pi}{2} + m \pi}{n-1} \right)\right)^{n}} = \frac{\sin\left(\frac{\hat{\theta} - \frac{\pi}{2} + m \pi}{n-1}-\frac{\pi}{2} + m\pi\right)}{\left(\cos\left( \frac{\hat{\theta} - \frac{\pi}{2} + m \pi}{n-1} \right)\right)^{n}}\\
%\end{align*}
%\begin{align*}
& = \frac{(-1)^m \sin\left(\frac{\hat{\theta} - \frac{\pi}{2} + m \pi}{n-1}-\frac{\pi}{2}\right)}{\left(\cos\left( \frac{\hat{\theta} - \frac{\pi}{2} + m \pi}{n-1} \right)\right)^{n}} = (-1)^{m+1} \left(\cos\left( \frac{\hat{\theta} - \frac{\pi}{2} + m \pi}{n-1} \right)\right)^{1- n},\\
&m = 1 - n,\, \ldots,\,n-2. 
\end{align*}
These solutions are not all distinct, but we will fix the values of $m$ appropriately in our construction. 

The corresponding critical point is given by
\begin{equation*}
q = q_m = \rho \sin(\psi) = \tan(\psi) = \tan\left( \frac{\hat{\theta} - \frac{\pi}{2} + m \pi}{n-1} \right).
\end{equation*}
\end{proof}
\begin{exm} When $n = 3$, we can impose equivalently that the discriminant $\Delta$ of the cubic $\cC$ with respect to $y$ vanishes. We compute
\begin{equation*}
\Delta = -\frac{27}{2} \left(c^2 \cos (2 \hat{\theta})+c^2-8 c \sin (\hat{\theta})-8\right).
\end{equation*}
Solving $\Delta = 0$ with respect to $c$ for fixed Lagrangian phase $\hat{\theta}$ gives
\begin{equation*}
 c = c_{\pm} = \frac{2}{\pm 1 -\sin (\hat{\theta}) }.  
\end{equation*}
We denote a corresponding connected component by $\cC_{\pm}$. The intersection points $\cC_{\pm} \cap \{1\} \times \R$ are solutions of 
\begin{equation*}
\left(3 y^2-1\right) \sin (\hat{\theta})-y \left(y^2-3\right) \cos(\hat{\theta}) = c_{\pm},
\end{equation*}
and the solution $q$ of multiplicity $2$ is given explicitly by
\begin{align}\label{qpm}
q_{\pm}(\hat{\theta}) = \frac{\sin \left(\frac{\hat{\theta}}{2}\right)\pm\cos \left(\frac{\hat{\theta} }{2}\right)}{\cos
   \left(\frac{\hat{\theta} }{2}\right)\mp\sin \left(\frac{\hat{\theta} }{2}\right)}.
\end{align}
Figure \ref{fig:TangentLine} shows the tangent line to $\cC_+$ for $\hat{\theta} = 3$.
\begin{figure}[h]
  \includegraphics[width=5cm]{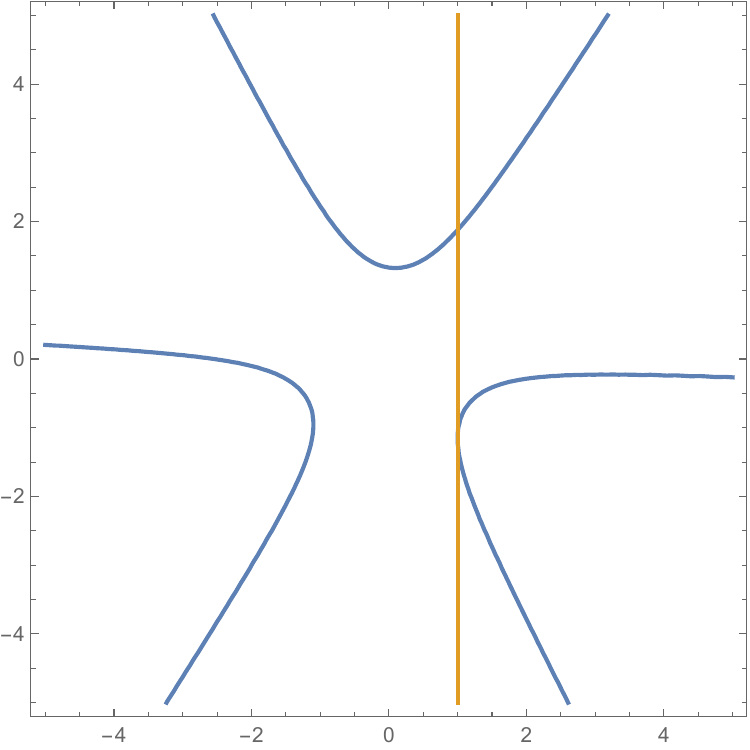}
  \caption{Splitting $\cC_+$ into graphical branches.}
  \label{fig:TangentLine}
\end{figure}
\end{exm}
\begin{lemma}\label{ConstructionLem2} Fix $c \in \R$ as in Lemma \ref{ConstructionLem1}. There are explicit choices of the K\"ahler parameter $a > 0$ and Lagrangian phase angle $\hat{\theta} \in (-\pi, \pi)$ for which the polynomial 
\begin{equation*}
\Imm e^{-\ii \hat{\theta}} (a + \ii y)^n - c 
\end{equation*}
has vanishing constant term. 
\end{lemma}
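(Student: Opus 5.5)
The condition in the statement is a single scalar equation. The constant term of the polynomial $P(y) = \Imm e^{-\ii\hat{\theta}}(a+\ii y)^n - c$ is its value at $y=0$, namely
\begin{equation*}
P(0) = \Imm e^{-\ii \hat{\theta}} a^n - c = -a^n \sin(\hat{\theta}) - c .
\end{equation*}
So I need to choose $\hat{\theta}\in(-\pi,\pi)$, with $\sin(\hat{\theta}) \neq 0$, together with an admissible index $m$, such that
\begin{equation*}
a^n = -\frac{c_m}{\sin(\hat{\theta})} > 0,
\end{equation*}
where $c = c_m = (-1)^{m+1}\left(\cos\left(\frac{\hat{\theta}-\frac{\pi}{2}+m\pi}{n-1}\right)\right)^{1-n}$ is as in Lemma \ref{ConstructionLem1}. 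Geometrically, this says the level set $\{\Imm e^{-\ii\hat{\theta}}(x+\ii y)^n = c\}$ passes through $(a,0)$. That is exactly the boundary condition $p=0$ at $x=a$, and it is what I expect the construction of $\cL^b$ to use.

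The plan has three steps. First, fix $m$ in the range $1-n,\ldots,n-2$ and record the sign of $c_m$. This sign is $(-1)^{m+1}$ times the sign of $\cos(\psi_m)^{1-n}$. If I restrict $\hat{\theta}$ so that $\psi_m = \frac{\hat{\theta}-\frac{\pi}{2}+m\pi}{n-1}\in(-\frac{\pi}{2},\frac{\pi}{2})$, then the cosine is positive and the sign is simply $(-1)^{m+1}$. For example, $m=0$ or $m=1$ works for $\hat{\theta}$ in a suitable subinterval. Second, choose the sign of $\sin(\hat{\theta})$ opposite to that of $c_m$, and define $a := \left(-c_m/\sin(\hat{\theta})\right)^{1/n}$. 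This is explicit and positive.

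Third, I will note that $|\cos\psi_m|\le 1$ and $n\ge 2$ give $|c_m| \geq 1$. Hence $a^n \geq 1/|\sin(\hat{\theta})| \geq 1$, so $a \geq 1$, with $a>1$ whenever $\sin(\hat{\theta}) \neq \pm 1$. This matches the paper's requirement $a>1$. Moreover, by continuity in $\hat{\theta}$ and the limit $a\to\infty$ as $\sin(\hat{\theta})\to 0$, the achievable values of $a$ fill an interval reaching to infinity; this is consistent with Remark \ref{PhaseAnglesRmk}.

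As a sanity check for $n=3$, I would use $c_+ = 2/(1-\sin(\hat{\theta}))>0$. Positivity of $a^3$ forces $\sin(\hat{\theta})<0$, and then
\begin{equation*}
a^3 = \frac{2}{\sin(\hat{\theta})\,(\sin(\hat{\theta})-1)},
\end{equation*}
where the denominator lies in $(0,2)$ for $\sin(\hat{\theta})\in(-1,0)$. So $a>1$, and every $a>1$ is attained.

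The only real obstacle is bookkeeping rather than analysis. One must choose $m$ and the $\hat{\theta}$-interval compatibly, so that $\cos\psi_m \neq 0$ and the sign condition holds. One must also check that the chosen critical point $(1,q_m)$ and the point $(a,0)$ lie on the same connected component $\cC$ over $[1,a]$. I would handle the latter by a monotonicity check of the branch between $x=1$ and $x=a$, first in the explicit case $n=3$ via \eqref{qpm}, and then in general using the same parametrisation by $\psi$.
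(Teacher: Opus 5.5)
Your proposal is correct and follows the paper's argument: impose $-\sin(\hat{\theta})\,a^n = c_m$, set $a = (-c_m/\sin(\hat{\theta}))^{1/n}$, and choose the branch $m$ and the range of $\hat{\theta}$ so that $a>1$. The paper uses the principal branch $m=0$ with $\hat{\theta}\in(0,\pi)\setminus\{\frac{\pi}{2}\}$, while your $n=3$ check with $c_+$ is the $m=1$ branch with $\sin(\hat{\theta})<0$. Your bound $|c_m| = |\cos\psi_m|^{1-n}\geq 1$ is a clean way of doing the ``elementary study'' that the paper leaves to a figure. The connected-component check you mention is not needed for this lemma, since the paper afterwards simply defines $\cC$ as the component through $(a,0)$.
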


This step simplifies radically several computations in our proof of Theorem \ref{MainThmIntro}, and, as we will see shortly, it has a natural cohomological interpretation.
\begin{proof}
Using the notation introduced in the proof of Lemma \ref{ConstructionLem1}, we impose
\begin{equation*}
- \sin(\hat{\theta}) a^n_m = c_m, 
\end{equation*}
giving
\begin{equation*}
a_m = \left(-\frac{c_m}{\sin(\hat{\theta})}\right)^{\frac{1}{n}} = \left((-1)^{m}\sin(\hat{\theta})\left(\cos\left( \frac{\hat{\theta} - \frac{\pi}{2} + m \pi}{n-1} \right)\right)^{n-1}\right)^{-\frac{1}{n}}.
\end{equation*}

For each $\hat{\theta}$, we fix a value of $m$ such that the necessary and sufficient condition 
\begin{equation*}
a_m > 1
\end{equation*}
on the K\"ahler parameter $a$ is satisfied (i.e. $\omega_{a, 1}$ is K\"ahler). This choice is locally constant away from some singular values. 

In particular we can single out the ``principal branch" $m = 0$, for which
\begin{equation*}
a = \left(\sin(\hat{\theta})\left(\cos\left( \frac{\hat{\theta} - \frac{\pi}{2}}{n-1} \right)\right)^{n-1}\right)^{-\frac{1}{n}}.
\end{equation*}

An elementary study then shows that for $n \geq 2$ we have
\begin{equation*}
\hat{\theta} \in (0, \pi) \setminus \{\frac{\pi}{2}\}\,\Rightarrow\, 0 < \sin(\hat{\theta})\left(\cos\left( \frac{\hat{\theta} - \frac{\pi}{2}}{n-1} \right)\right)^{n-1} < 1  
\end{equation*}
(see Figure \ref{fig:nDimKahlerParam} for the case $n = 4$).
\begin{figure}[h]
  \includegraphics[width=5cm]{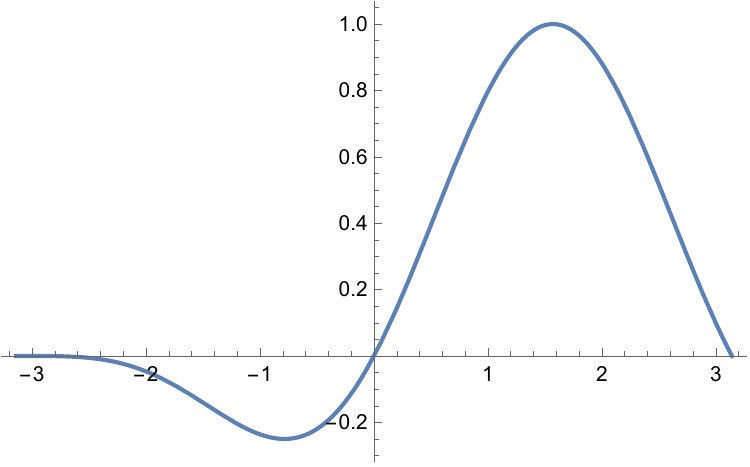}
  \caption{}
  \label{fig:nDimKahlerParam}
\end{figure} 
\end{proof}
\begin{definition} Fix $c$ as in Lemma \ref{ConstructionLem1} and $a, \, \hat{\theta}$ as in Lemma \ref{ConstructionLem2}. We define $\cL^b \subset M_{a, b}$ as the family of sLag multi-sections, parametrised by $b \in (0, 1)$, corresponding to the connected component $\cC$ of the level set $\Imm e^{-\ii \hat{\theta}}(x + \ii y) = c$ containing the point $(a, 0)$. The Lagrangian phase angle of $\cL^b$ is $\hat{\theta} \in (0, \pi)$.
\end{definition}  

\subsection{Boundary conditions} We need to understand the boundary conditions satisfied by $\cL^b$. These are determined by the solution $y = y(a, c, \hat{\theta})$ to
\begin{equation*}
\Imm e^{-\ii \hat{\theta}} (a + \ii y)^n = c,\, (a, y) \in \cC. 
\end{equation*}
Recalling that by construction we have $\Imm e^{-\ii \hat{\theta}} a^n = c$, we write this in the form
\begin{equation*}
\Imm e^{-\ii \hat{\theta}} a^n\left(1 + \ii p\right)^n = c,\,p := \frac{y}{a}.
\end{equation*}
In general, given $z \in \C^*$, the condition 
\begin{equation*}
\Imm (z\lambda) = \Imm z
\end{equation*}
is equivalent to
\begin{equation*}
\frac{\bar{\lambda} - 1}{\lambda - 1} = \frac{z}{\bar{z}},
\end{equation*}
and so 
\begin{equation*}
\lambda = 1 - r e^{- \ii \arg(z)},\,r > 0.
\end{equation*}
So our auxiliary parameter $p$, which determines the boundary conditions satisfied by $\cL^b$, solves
\begin{equation*}
\left(1 + \ii p \right)^n = 1 - r e^{-\ii \arg\left(e^{-\ii \hat{\theta}} a \right)} = 1 - r e^{\ii \hat{\theta}},
\end{equation*}
or equivalently
\begin{equation*}
\hat{\theta} = \arg \left(1 - \left(1 + \ii p \right)^n\right) \in (-\pi, \pi).
\end{equation*}
We note that by an elementary computation the interval 
\begin{equation*}
p \in \left(-\tan\left(\frac{\pi}{n}\right), 0\right)
\end{equation*}
corresponds to 
\begin{equation*}
\hat{\theta} \in \left(0, \frac{\pi}{2}\right),\,\hat{\theta} = \arctan\left(\frac{ \Imm \left(1 - \left(1 + \ii p \right)^n\right)}{\Rea \left(1 - \left(1 + \ii p\right)^n\right)}\right) 
\end{equation*} 
while
\begin{equation*}
p \in \left(0, \tan\left(\frac{\pi}{n}\right)\right)
\end{equation*}
corresponds to 
\begin{equation*}
\hat{\theta} \in \left(\frac{\pi}{2}, \pi \right),\,\hat{\theta} = \arctan\left(\frac{ \Imm \left(1 - \left(1 + \ii p \right)^n\right)}{\Rea \left(1 - \left(1 + \ii p\right)^n\right)}\right) + \pi.
\end{equation*} 

We claim that for $p \in \left(-\tan\left(\frac{\pi}{n}\right), \tan\left(\frac{\pi}{n}\right)\right)$ we have
\begin{equation*}
a + \ii a p \in \cC,
\end{equation*}
that is, the points $a$, $a + \ii a p$ lie in the same connected component of the locus $\Imm e^{-\ii \hat{\theta}}(x + \ii y)^n = c$. 

Indeed, as explained by Jacob and Sheu \cite{JacobSheu}, the connected components of 
\begin{equation*}
\{\Imm e^{-\ii \hat{\theta}}(x + \ii y)^n = c\} \subset \R^2
\end{equation*}
are analytic curves asymptotic to the union of rays
\begin{equation*}
\{\Imm e^{-\ii \hat{\theta}}(x + \ii y)^n = 0\} \subset \R^2,
\end{equation*}
which moreover lie in \emph{alternating} wedges. See Figure \ref{fig:Wedges} for the case $n = 4$, $\hat{\theta} = 1$, $c = 1$.
\begin{figure}[h]
  \includegraphics[width=5cm]{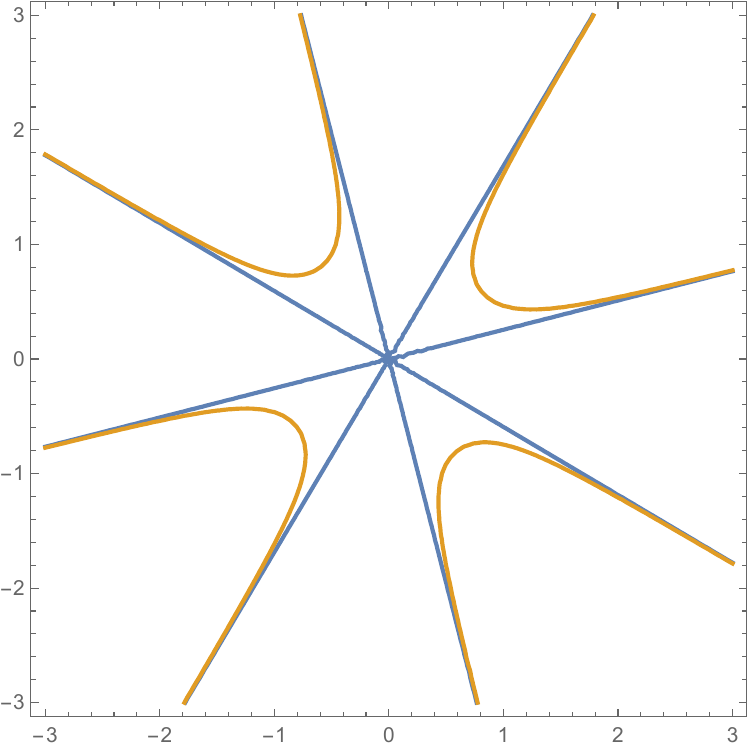}
  \caption{Structure of the level set for $n = 4$.}
  \label{fig:Wedges}
\end{figure} 
It follows that, if we have 
\begin{equation*}
|\arg(a + \ii a p)| < \frac{\pi}{n},  
\end{equation*}
then $a + \ii a p$ necessarily lies in the connected component $\cC$. Since $a$ is real and positive, the condition on the argument is equivalent to 
\begin{equation*}
|\arg(1 + \ii p)| < \frac{\pi}{n}, 
\end{equation*}
which is satisfied tautologically for $p \in \left(-\tan\left(\frac{\pi}{n}\right), \tan\left(\frac{\pi}{n}\right)\right)$.

The upshot of our computations is that the boundary conditions satisfied by $\cL^b$ are given in momentum coordinates by
\begin{equation*}
\cC \cap \{a\} \times \R = \{a,\, a (1 + \ii p)\},\,p \in \left(-\tan\left(\frac{\pi}{n}\right), \tan\left(\frac{\pi}{n}\right)\right).
\end{equation*}
\begin{exm} When $n = 3$, by our construction, $y = a p$ is a nonzero solution of the cubic equation $\Imm e^{-\ii \hat{\theta}}(a + \ii y)^3 = c$, where $c$ is chosen precisely so that the cubic has vanishing constant term. So $p, \hat{\theta}$ solve 
\begin{equation*} 
p^2 = 3 \tan (\hat{\theta})p + 3.
\end{equation*} 
For $\hat{\theta} \in (0, \frac{\pi}{2})$, the unique solution $p \in \left(-\tan\left(\frac{\pi}{3}\right), \tan\left(\frac{\pi}{3}\right)\right)$ is given by  
\begin{equation*}
p = \frac{1}{2} \left(3 \tan (\hat{\theta}) - \left(9 \tan ^2(\hat{\theta})+12\right)^{\frac{1}{2}}\right).
\end{equation*}
Figure \ref{fig:DirectSum1} shows $\cC$ together with the lines $x = 1$, $x = a$ for $\hat{\theta} = 0.1$.
\begin{figure}[h]
  \includegraphics[width=5cm]{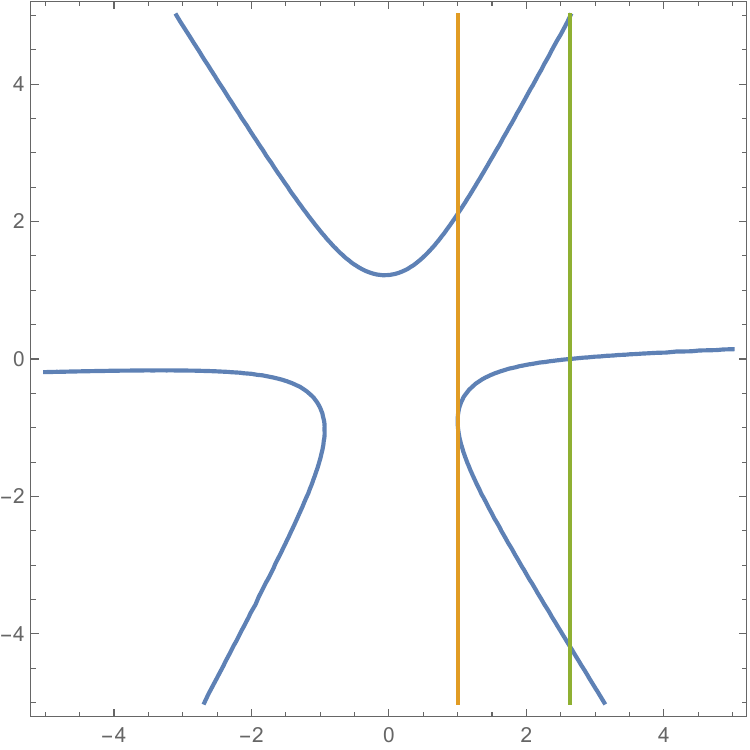}
  \caption{Imposing boundary conditions.}
  \label{fig:DirectSum1}
\end{figure}
\end{exm}
\begin{exm}\label{SurfExm} Similarly, in the simplest case $n = 2$, $y = a p$ solves the quadratic equation
\begin{equation*}
\Imm e^{-\ii \hat{\theta}}(a + \ii y)^2 = c
\end{equation*}
for $c$ killing the constant term, from which we compute directly
\begin{equation*}
p = -2 \cot(\hat{\theta}),
\end{equation*} 
which, together with the relations
\begin{align*}
& a = \left(\sin(\hat{\theta}) \cos\left( \hat{\theta} - \frac{\pi}{2}\right) \right)^{-1} = \frac{1}{\sin^2
(\hat{\theta})},\\
& q = \tan\left( \hat{\theta} - \frac{\pi}{2} \right),
\end{align*}
determines all our parameters in terms of the angle $\hat{\theta}$.  
\end{exm}
\subsection{Scaling action and rationality}\label{ScalingSec} There is a natural scaling action on our whole construction, which simply replaces $\omega_{a, b}$ with $\omega_{k a, k b}$ for all $b \in (0, 1)$ for any $k > 0$. This leaves the Lagrangian phase angle $\hat{\theta}$ invariant. The critical point of our multi-section, now lying over the critical momentum value $x = k$, is given by
\begin{equation*}
k q = k \tan\left( \frac{\hat{\theta}(p) - \frac{\pi}{2}}{n-1} \right),\,\hat{\theta} = \arg \left(1 - \left(1 + \ii p \right)^n\right) \in (0, \pi) \setminus \{\frac{\pi}{2}\}.
\end{equation*} 
Similarly the boundary values over the momentum K\"ahler parameter
\begin{equation*}
k a = k \left(\sin(\hat{\theta})\left(\cos\left( \frac{\hat{\theta}(p) - \frac{\pi}{2}}{n-1} \right)\right)^{n-1}\right)^{-\frac{1}{n}} 
\end{equation*}
are given by
\begin{equation*}
\{0, k a p\},\,\,p \in \left(-\tan\left(\frac{\pi}{n}\right), \tan\left(\frac{\pi}{n}\right)\right).
\end{equation*}

Thus, the sLag multi-section $\cL^b$ is well defined provided that the critical point and the boundary conditions satisfy 
\begin{equation*}
k q,\,k a p \in \Z.
\end{equation*}
We call such values of $k > 0$ \emph{admissible}. Clearly the latter condition can be achieved by scaling with a suitable $k > 0$ iff 
\begin{equation*}
\frac{a(p) p}{q(p)} \in \Q.
\end{equation*}
The function $\frac{a(p) p}{q(p)}$ is continuous and satisfies 
\begin{equation*}
\lim_{p \to \mp \tan\left(\frac{\pi}{n}\right)^{\pm}} \frac{a(p) p}{q(p)} = +\infty,\,\lim_{p \to 0} \frac{a(p) p}{q(p)} = 0,
\end{equation*}  
so $\frac{a(p) p}{q(p)} \in \Q$ holds on a dense subset of $\left(-\tan\left(\frac{\pi}{n}\right), \tan\left(\frac{\pi}{n}\right)\right)$. Moreover, we have
\begin{equation*}
\lim_{p \to \mp \tan\left(\frac{\pi}{n}\right)^{\pm}} a(p) = +\infty,\,\lim_{p \to 0} a(p) = 1,
\end{equation*}  
so the required rationality condition holds for a dense subset of K\"ahler parameters $a \in (1, \infty)$.
\section{Construction of sLag sections $\cL^b_i$}\label{sLagsSec}
In the previous Section we constructed a family of sLag multi-sections $\cL^b \subset M_{a, b}$ parametrised by $b \in (0, 1)$ for fixed $a > 1$. This family is constant in momentum coordinates and corresponds to a distinguished connected component $\cC \subset (b, a) \times \R$ of the locus $\Imm e^{-\ii \hat{\theta}}(x + \ii y)^n = c$. 

On compact subsets of $M_{a, b}$, the smooth limit of the family $\cL^b$ as $b \to 1$ is given by the \emph{disconnected} sLag corresponding to the union of level sets 
\begin{equation*}
\cC \cap (1, a) \times \R = \cC_1 \cup \cC_2, 
\end{equation*}
where we write $\cC_1$ for the connected component containing $a(1 + \ii p)$. This is the disjoint union of sLag sections $\cL_1 \cup \cL_2$ with $\cL_i \subset M_{a, 1}$. 
\begin{rmk} In momentum coordinates, $\cL_1$, $\cL_2$ correspond to sections over the momentum interval $[1, a)$ which intersect at $(1, q)$ with vertical tangent. Thus $\cL_1$, $\cL_2$ intersect non-transversely at infinity in the sense of Definition \ref{IntersectionDef}.
\end{rmk}
We consider the holomorphic line bundles on $X$ given by
\begin{equation*}
L_1 = \olo(- k a p H + k q E),\, L_2 = \olo( k q E)
\end{equation*}
where $k > 0$ is such that $k a p, k q \in \Z$. Recall this $k$ exists for $p$ lying in a dense subset of $\left(-\tan\left(\frac{\pi}{n}\right), \tan\left(\frac{\pi}{n}\right)\right)$, corresponding to a dense subset of K\"ahler parameters $a \in (1, \infty)$.

The line bundles $L_i$, $i =1, 2$ are chosen precisely so that the boundary conditions for the dHYM equation on the \emph{duals} $L^{\vee}_i$ agree with those for the sLag $\cL_i$ constructed above for $b$ close to $1$.  
\begin{lemma} The sLag sections $\cL_i \subset M_{a, 1}$ extend to families of sLag sections parametrised by all $b$ sufficiently close to $1$, satisfying the boundary conditions given in momentum coordinates by
\begin{equation*}
\cC_1 \cap \{b\} \times \R = b + \ii q,\,\cC_1 \cap \{a\} \times \R = a + \ii a p,\,
\end{equation*}  
respectively
\begin{equation*}
\cC_2 \cap \{b\} \times \R = b + \ii q,\,\cC_2 \cap \{a\} \times \R = 0.
\end{equation*}
\end{lemma}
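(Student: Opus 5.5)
The plan is to read both families directly off the fixed level set $\cC$ in momentum coordinates, since the whole construction of Section \ref{MultiSec} is constant in $b$. For $b$ close to $1$ (on either side), the vertical line $\{b\}\times\R$ meets $\cC$ near the double point $(1,q)$. The key observation is that we may keep the same harmonic polynomial but let the level vary. We should not use the fixed component $\cC$ for $b>1$, because it does not reach $x=b<1$ as a section. Instead, for each branch $i=1,2$, we look for a solution of \eqref{dHYM} on $(b,a)$ of the form
\begin{equation*}
\Imm e^{-\ii\hat{\theta}_i(b)}(x+\ii f_i(x))^n = c_i(b),
\end{equation*}
with the two boundary conditions prescribed in the statement. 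These are $f_1(b)=q$, $f_1(a)=ap$ for $i=1$, and $f_2(b)=q$, $f_2(a)=0$ for $i=2$.

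Concretely, I would run an implicit function theorem argument in the two parameters $(\hat{\theta}_i, c_i)$. The two endpoint conditions are the equations $\Imm e^{-\ii\hat{\theta}_i}(b+\ii q)^n=c_i$ and $\Imm e^{-\ii\hat{\theta}_i}(a+\ii y_a)^n=c_i$, with $y_a\in\{ap,0\}$. At $b=1$ these are solved by $(\hat{\theta},c)$ as in Lemmas \ref{ConstructionLem1} and \ref{ConstructionLem2}. Subtracting the two equations eliminates $c_i$, leaving a single equation
\begin{equation*}
\Imm e^{-\ii\hat{\theta}_i}\big((a+\ii y_a)^n-(b+\ii q)^n\big)=0.
\end{equation*}
This equation determines $\hat{\theta}_i(b)=\arg\big((a+\ii y_a)^n-(b+\ii q)^n\big)$ mod $\pi$ explicitly and smoothly in $b$, provided $(a+\ii y_a)^n\neq(b+\ii q)^n$. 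That holds at $b=1$ because the two points lie on different branches; by continuity it also holds nearby. Then $c_i(b)$ is recovered from either equation. For $i=1,2$ the two branches generally get different phases $\hat{\theta}_1(b)\neq\hat{\theta}_2(b)$, and this is exactly what feeds the slope inequality later on.

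It remains to check that the relevant connected component of the new level set joining $(b,q)$ to $(a,y_a)$ is a graph over $(b,a)$, so that it defines a section. Equivalently, we need $\del_y\Imm e^{-\ii\hat{\theta}_i}(x+\ii y)^n\neq 0$ along the arc for $x\in(b,a)$, with a vertical tangent allowed only at $x=b$. This is the main obstacle, because at $b=1$ the two branches $\cC_1,\cC_2$ meet at $(1,q)$ with vertical tangent, which is a degenerate critical point.

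I would handle it by perturbation of the explicit picture. On compact subsets of $(1,a]\times\R$, each $\cC_i$ is a graph with $\del_y\neq0$, and this persists for small changes of $(\hat{\theta},c)$. Near $(1,q)$ I would use the local normal form of a nondegenerate fold. The critical point from Lemma \ref{ConstructionLem1} is a fold of the projection $\cC\to x$, since $\del_x\Imm\neq0$ there by harmonicity. Perturbing $(\hat{\theta},c)$ therefore moves the turning point $x_*$ smoothly. The endpoint condition at $(b,q)$ forces the turning point to lie at or to the left of $b$, so the branch through $(b,q)$ is graphical on $(b,a)$; I would verify this sign by a first-order computation in $b-1$. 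Finally, the pair (point on $x=b$, endpoint value) is rational after rescaling by the same admissible $k$ as in Section \ref{ScalingSec}. By the LYZ correspondence recalled in Section \ref{CalabiSec}, the graphs therefore give smooth families of sLag sections $\cL^b_i\subset M_{a,b}$ extending $\cL_i$.
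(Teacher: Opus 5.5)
Your elimination of $c_i$ is correct, and it is a more explicit route than the paper's, which simply invokes openness of the Jacob--Sheu criterion. The phase is forced, $\hat{\theta}_i(b)=\arg\big((a+\ii y_a)^n-(b+\ii q)^n\big)$ mod $\pi$, and the unique level curve through $(a,y_a)$ and $(b,q)$ is a small perturbation of $\cC$ with a nondegenerate fold near $(1,q)$.

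\textbf{The gap is the graphicality step.} That the turning point lies at or left of $x=b$ is automatic, and it does not imply graphicality. What matters is which half of the fold $(b,q)$ lands on, i.e.\ whether the arc from $(a,y_a)$ to $(b,q)$ avoids the turning point. The first-order sign you defer comes out wrong on one side of $b=1$. By the lifted-angle formula in the proof of Lemma \ref{PhasesLem}, the slope angle of the level curve at $(b,q)$ is (continuous lifts)
\begin{equation*}
\arctan f_i'(b^+)=\vartheta_i(b)-(n-1)\arctan\frac{q}{b}.
\end{equation*}
Take $p<0$. The branch through $(a,ap)$ is graphical iff this angle is $>-\frac{\pi}{2}$ for $i=1$. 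The branch through $(a,0)$ is graphical iff it is $<\frac{\pi}{2}$ for $i=2$. Both are equalities at $b=1$. Subtracting, the two sections can coexist only if $\vartheta_2(b)-\vartheta_1(b)<\pi$. Since $\arg Z(L_i)=-\vartheta_i$, this is exactly the inequality $\arg Z(L_1)<\arg Z(L_2[1])$ of Proposition \ref{SlopeInequProp}, which fails for $b>1$. In fact $\cL^b_1$ alone fails for every $b>1$ near $1$: $\vartheta_1'(1)<0$ (equivalent to $\del_b\lambda(L_1)|_{b=1}>0$), while $\frac{d}{db}\arctan\frac{q}{b}>0$ for $q<0$.

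\textbf{A concrete counterexample.} Take $n=2$, $\hat{\theta}=\frac{\pi}{4}$. Lemmas \ref{ConstructionLem1}, \ref{ConstructionLem2} give $a=\sqrt{2}$, $q=-1$, $ap=-2\sqrt{2}$. With $t=\cot\theta$, the level sets are the hyperbolas $(1+t^2)x^2-(y+tx)^2=\mathrm{const}$, whose right branch has its vertex on $y=-tx$. One finds $\tan\hat{\theta}_1(b)=\frac{8-2b}{5+b^2}$ and $\tan\hat{\theta}_2(b)=\frac{2b}{3-b^2}$. For both branches, $(a,y_a)$ and $(b,q)$ lie on the same half iff $b<1$. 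For instance, at $b=1.1$ the curve through $(\sqrt{2},-2\sqrt{2})$ and $(1.1,-1)$ has its vertex at $x_*\approx 1.093<1.1$, yet the two points lie on opposite halves. This is a direct counterexample to your inference. So the lemma as stated fails for $b>1$, and no sign check can rescue your last step there.

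\textbf{The paper's route does not avoid this either.} At $b=1$ one has $f_i'(1^+)=\mp\infty$, so $L_i^{\vee}$ carry no smooth dHYM metric for $\omega_{a,1}$. The vertical tangent is precisely the equality case $\Imm\frac{\int_E e^{-\ii\omega+\alpha}}{\int_X e^{-\ii\omega+\alpha}}=0$ of the Jacob--Sheu condition for $V=E$. So $b=1$ is a boundary point of the open region where that condition holds, not an interior one, and openness says nothing near it.

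Carried out correctly, your method yields only one-sided extensions. For $n=2$, both sections extend exactly for $b<1$.
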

This follows from a result of Jacob and Sheu.
\begin{thm}[\cite{JacobSheu}, Theorem 1] On $X = \Bl_p \PP^n$, the existence of a solution of the general dHYM equation
\begin{equation*}
\Imm e^{-\ii \hat{\theta}}(\omega + \ii \alpha)^n = 0,
\end{equation*}
with Calabi symmetry, is equivalent to the following numerical condition:  for all irreducible analytic subvarieties $V \subset X$, we have
\begin{equation*}
\varepsilon_V \Imm \frac{\int_V e^{-\ii \omega + \alpha}}{\int_X e^{-\ii \omega + \alpha}} > 0, 
\end{equation*}
where $\varepsilon_V = \pm 1$ depends only on $\dim V$, $[\omega]$, $[\alpha]$, in a locally constant way. 
\end{thm}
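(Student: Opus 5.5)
The plan is to reduce both directions of the equivalence to a statement about the planar level set $\{\Imm e^{-\ii \hat{\theta}}(x+\ii y)^n = c\}$, using the exact form \eqref{dHYM} of the ODE from Section \ref{CalabiSec}. With Calabi symmetry the dHYM equation for $[\omega] = aH - bE$, $[\alpha] = pH - qE$ is equivalent to finding $f \in C^{\infty}(b,a)$ with $f(b^+) = q$, $f(a^-) = p$ and $\Imm e^{-\ii\hat\theta}(x+\ii f(x))^n = c$. First I would fix $\hat\theta$ and $c$ cohomologically. Integrating $(\omega+\ii\alpha)^n$ over $X$ in momentum coordinates gives, up to a positive normalisation, $\int_X(\omega+\ii\alpha)^n \propto (a+\ii p)^n - (b+\ii q)^n$. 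This is because $(\omega+\ii\alpha)^n$ pushes forward to $\frac{d}{dx}(x+\ii f)^n\,dx$. So $\hat\theta$ is determined as the argument of this difference, and then $c = \Imm e^{-\ii\hat\theta}(a+\ii p)^n = \Imm e^{-\ii\hat\theta}(b+\ii q)^n$ automatically. Existence of a solution is then equivalent to the two endpoints $(b,q)$ and $(a,p)$ lying on the same connected component $\cC$ of the level set, with $\cC$ graphical over $(b,a)$. Graphicality means $\del_y \Imm e^{-\ii\hat\theta}(x+\ii y)^n = n\Rea e^{-\ii\hat\theta}(x+\ii y)^{n-1} \neq 0$ along the arc.

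Next I would use the Jacob--Sheu description of the level sets recalled in Section \ref{MultiSec}: components are analytic curves asymptotic to the rays $\{\Imm e^{-\ii\hat\theta}(x+\ii y)^n = 0\}$, lying in alternating wedges. Using it, I would rewrite graphicality of the arc joining the endpoints as a finite list of angle conditions on $\arg(b+\ii q)$ and $\arg(a+\ii p)$ relative to $\hat\theta$. Concretely, $\arg e^{-\ii\hat\theta}(x+\ii y)^{n-1}$ must stay in a fixed interval of length $\pi$ along the arc. Since $\arg(x+\ii f)$ is monotone along a graphical solution (this follows from the ODE), it suffices to check this at the two endpoints, together with a sign condition selecting the correct branch.

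The remaining step is to match these endpoint conditions with the subvariety inequalities. By torus invariance plus Calabi symmetry, it suffices to test $V$ among the linear subvarieties: the $V_k \cong \PP^k \subset E$, the $V'_k$ given by $\PP^k \subset H$ (away from $E$), and their proper transforms. For these I would compute $\int_V e^{-\ii\omega+\alpha}$ explicitly as multiples of $(b+\ii q)^k$ or $(a+\ii p)^k$, up to powers of $\ii$ and signs; for instance $\int_{V_k} e^{-\ii\omega+\alpha} \propto (-\ii)^k(b+\ii q)^k/k!$. The condition $\varepsilon_V\Imm(\int_V/\int_X) > 0$ then becomes a statement that $k\arg(b+\ii q)$, respectively $k\arg(a+\ii p)$, lies on the correct side of $\hat\theta$. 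The sign $\varepsilon_V$ comes from tracking the branch of lifted angle, which depends only on $\dim V$ and is locally constant in the classes.

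I expect the main obstacle to be this bookkeeping. One must show that the family of inequalities over all $k$ and both ends is exactly equivalent to the single requirement that $\arg e^{-\ii\hat\theta}(x+\ii y)^{n-1}$ stays in one $\pi$-interval along the arc. To handle it, I would first treat the supercritical range, where all lifted angles lie in a window of length $\pi$ and the conditions collapse to the top-dimensional ones. I would then reduce the general case to this via the alternating-wedge structure, checking that each wedge crossing corresponds to a failing lower-dimensional inequality.
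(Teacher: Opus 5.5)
The paper does not prove this statement. It is quoted from \cite{JacobSheu} and used only to conclude that solvability is open in $[\omega]$. The first half of your plan is correct and is the real mechanism. First, $\int_X(\omega+\ii\alpha)^n$ is a positive multiple of $(a+\ii p)^n-(b+\ii q)^n$, so both endpoints lie on one level set. Second, for $c\neq 0$ every component of $\{\Imm e^{-\ii\hat\theta}(x+\ii y)^n=c\}$ is a polar graph $r^n\sin(n\phi-\hat\theta)=c$ over an interval of length $\pi/n$ (rays if $c=0$). Hence $\phi=\arg(x+\ii y)$ is monotone along every arc. This polar-graph structure, not the ODE along a solution you do not yet have, is what the converse direction needs. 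Third, a solution exists if and only if $(n-1)\arg(a+\ii p)-\hat\theta$ and $(n-1)\arg(b+\ii q)-\hat\theta$ lie in a common interval $(-\frac{\pi}{2}+m\pi,\frac{\pi}{2}+m\pi)$; the same-component condition is then automatic. This already gives the openness the paper uses.

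The gap is the final step. It carries all the content of the theorem, you leave it as bookkeeping, and the mechanism you sketch does not work.

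\textbf{Test subvarieties.} The reduction to finitely many $V$ should rest on two facts. The integral $\int_V e^{-\ii\omega+\alpha}$ depends only on $[V]$. The effective cone of $k$-cycles is spanned by a linear $\PP^k\subset E$ and the proper transform $\tilde P_k$ of a linear $\PP^k$ through the blown-up point. For $\tilde P_k$ one gets $\frac{(-\ii)^k}{k!}\big((a+\ii p)^k-(b+\ii q)^k\big)$, which is not an angle condition at either endpoint.

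\textbf{Necessity fails in low dimension.} For $1\le\dim V\le n-2$ the inequalities do not follow from existence, for any sign depending only on $\dim V$. Take $n=3$ and $\alpha=0$, which is a solution with $\hat\theta=0$. Then the ratio equals $-6\int_V\omega/\int_X\omega^3\in\R$ for every curve $V$, so the strict inequality fails. Now take $[\alpha]=-qE$ with $0<q\ll1$, where solutions still exist. To first order in $q$, the imaginary parts for a line in $E$ and for the proper transform of a line through the point are positive multiples of $-q(a^3+2b^3)$ and $q(a-b)^2(a+2b)$, which have opposite signs.

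\textbf{Divisors do not suffice.} The divisor inequalities alone do not force the two endpoint angles into a common interval. Take $n=3$, $a+\ii p=e^{0.3\pi\ii}$ and $b+\ii q=\epsilon e^{-0.35\pi\ii}$ with $\epsilon$ small. The inequalities for $E$, $H$ and the proper transform of a hyperplane through the point all hold with a common sign. But with $\hat\theta\approx 0.9\pi$ the two angles are $\approx-0.3\pi$ and $\approx-1.6\pi$, in intervals whose $m$ differ by $2$, so no solution exists.

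Hence the statement cannot be proved in the form quoted here, with $\varepsilon_V$ depending only on $\dim V$ and the classes. You must first fix a correct formulation, for instance via lifted phases. What your set-up does prove cleanly is necessity for divisors. Set $\lambda=e^{-\ii\hat\theta}(x+\ii f)^{n-1}(1+\ii f')\in\R\setminus\{0\}$; then $\frac{d}{dx}\Rea e^{-\ii\hat\theta}(x+\ii f)^{n-1}=(n-1)\lambda x/(x^2+f^2)$. This gives the inequalities for $E$, $H$ and the proper transform of a hyperplane through the point, all with the sign of $\lambda$.
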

Thus, the existence of dHYM solutions with Calabi symmetry is an open condition with respect to varying $[\omega]$ for fixed $[\alpha]$. 

Applying this in our case to the line bundles $L_i$ we find that, for all $b$ sufficiently close to $1$, the dHYM equation with respect to $\omega_{k a, k b}$ is solvable on $L^{\vee}_i$ and $L_i$. Thus the sLag sections $\cL_i$ extend to sLag sections $\cL^b_i$ for $b$ sufficiently close to $1$, as claimed.

Figures \ref{fig:Defo1}, \ref{fig:Defo2} show $\cC_i$ for $n = 3$, $b = 1.1$, $b = 0.9$  respectively, for a suitable K\"ahler parameter $a \approx 1.2$, with the constant background of $\cC$; in each Figure, $\cC$ and $\cC_i$ are the rightmost branches, and the relevant momentum interval is given by the projection to the positive real axis of the intersection points $\cC_1 \cap \cC_2$, $\cC_i \cap \cC$. 
\begin{figure}
\centering
\begin{minipage}{.5\textwidth}
  \centering
  \includegraphics[width=5cm]{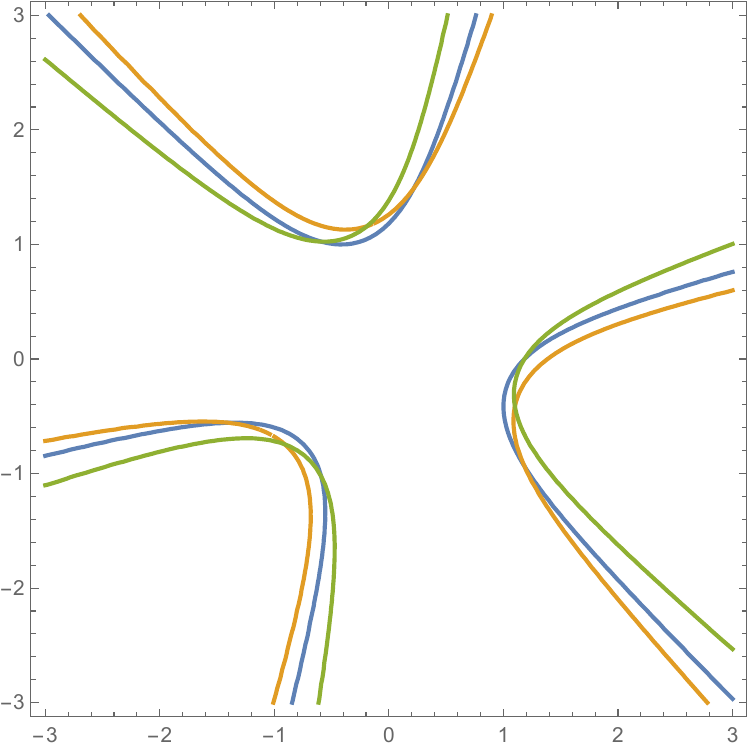}
  \caption{}
  \label{fig:Defo1}
\end{minipage}%
\begin{minipage}{.5\textwidth}
  \centering
  \includegraphics[width=5cm]{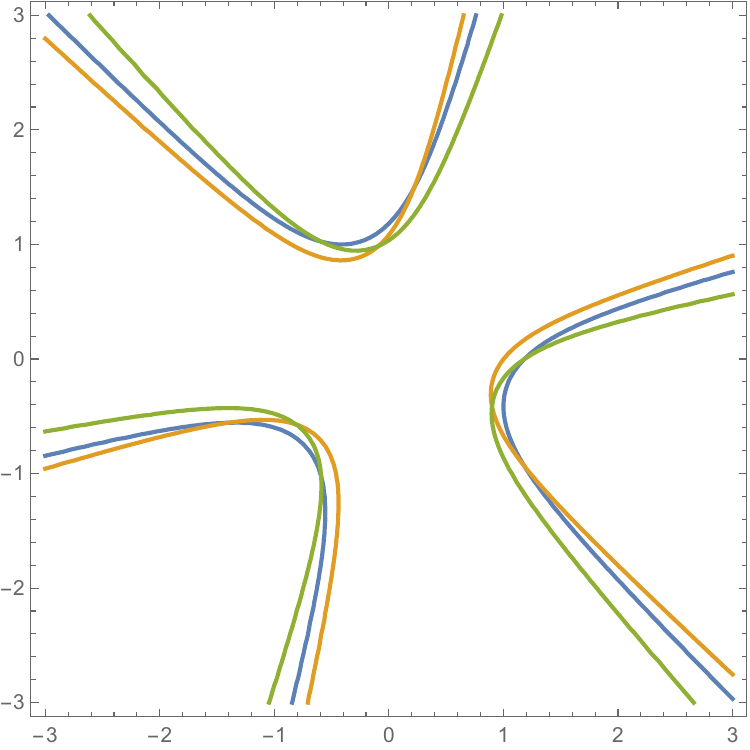}
  \caption{}
  \label{fig:Defo2}
\end{minipage}
\end{figure}

The scaling action described in Section \ref{ScalingSec} also applies to these sections, yielding sLags $\cL_{i, k} \subset M_{k a, k b}$; we suppress $k$ from the notation since this should cause no confusion.

\subsection{Lagrangian phase angle} We compute the Lagrangian phase angle of the sLag sections $\cL_i$, i.e. of $\cL^b_i$ for $b = 0$. 
\begin{lemma}\label{PhasesLem} Suppose $p \in \left(-\tan\left(\frac{\pi}{n}\right), 0\right)$, corresponding to $\hat{\theta} \in (0, \frac{\pi}{2})$. Then the Lagrangian phase angles of the sLag sections $\cL_1$, $\cL_2$ are  $-\hat{\theta} + \pi$, $-\hat{\theta}$ respectively. Similarly, for $p \in \left(0, \tan\left(\frac{\pi}{n}\right)\right)$, the Lagrangian phase angles of $\cL_1$, $\cL_2$ are  $-\hat{\theta}$, $-\hat{\theta} + \pi$.
\end{lemma}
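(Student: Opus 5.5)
The plan is to compute the lifted Lagrangian phase of each section directly from its momentum profile $f_i$. I will pin down its branch at one convenient point, and then compare the two sections at their common vertical-tangent point $(1,q)$. Under the Calabi ansatz, the graph of the dHYM form has fibrewise eigenvalues $x^{-1}f(x)$, with multiplicity $n-1$, and $f'(x)$. The continuous lift of the phase of $f_i$ is therefore
\begin{equation*}
\Theta_i(x) = (n-1)\arctan\left(\frac{f_i(x)}{x}\right) + \arctan f'_i(x), \qquad \arctan \in \left(-\tfrac{\pi}{2}, \tfrac{\pi}{2}\right).
\end{equation*}
The dHYM equation \eqref{dHYM} in its differentiated form, $\Imm e^{-\ii \hat{\theta}}(1+\ii x^{-1}f)^{n-1}(1+\ii f')=0$, shows that $\Theta_i \equiv \hat{\theta} \bmod \pi$ and that $\Theta_i$ is constant on $(1,a)$. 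The Lagrangian phase angle of $\cL_i$ is $-\Theta_i$. The minus sign appears because the sections are LYZ transforms of the duals $L_i^{\vee}$, equivalently because of the orientation convention in $\tilde{\theta}_i = u^{ij}\del_{y_j}\log h$. I will check this sign once against the convention of Section \ref{CalabiSec}; it is the only convention-dependent input.

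\emph{Step 1: the section $\cL_2$.} The profile $f_2$ satisfies $f_2(a)=0$, so $\Theta_2 = \arctan f'_2(a) \in (-\frac{\pi}{2},\frac{\pi}{2})$ and $\Theta_2 \equiv \hat{\theta} \bmod \pi$. For $\hat{\theta}\in(0,\frac{\pi}{2})$ this forces $\Theta_2=\hat{\theta}$. For $\hat{\theta}\in(\frac{\pi}{2},\pi)$ it forces $\Theta_2=\hat{\theta}-\pi$. Note that $f'_2(a)$ is finite, because $(a,0)$ is not a critical point of $\cC$.

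\emph{Step 2: comparing $\cL_1$ with $\cL_2$ at $(1,q)$.} Both profiles pass through $(1,q)$ with vertical tangent, so the terms $(n-1)\arctan(f_i/x)$ coincide in the limit $x\to 1^+$. Meanwhile $\arctan f'_i \to \pm\frac{\pi}{2}$, with the sign determined by whether the branch moves up or down as it leaves the turning point. Since each $\Theta_i$ is constant, $|\Theta_1-\Theta_2| = \pi$. The upper branch has $f'\to+\infty$ and the lower branch has $f'\to-\infty$, so the upper branch carries the larger lift.

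For $p<0$, $\cC_1$ ends at $a(1+\ii p)$, below $\cC_2$, which ends at $0$. Hence $\Theta_1=\Theta_2-\pi=\hat{\theta}-\pi$, and the phases are $-\hat{\theta}+\pi$ and $-\hat{\theta}$, as claimed. For $p>0$, $\cC_1$ is the upper branch, so $\Theta_1=\Theta_2+\pi=\hat{\theta}$. The phases are then $-\hat{\theta}$ for $\cL_1$ and $-(\hat{\theta}-\pi)=-\hat{\theta}+\pi$ for $\cL_2$.

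The main obstacle is making Step 2 rigorous, and with it the claim that $\cC_1$ lies entirely below (respectively above) $\cC_2$ on $(1,a)$. This follows because $\cC_1$ and $\cC_2$ are the two graphical branches into which the vertical line through $(1,q)$ splits $\cC$, and they cannot cross over $(1,a)$, since the $\cL_i$ are disjoint there. Their order can therefore be read off from the endpoint values $ap$ and $0$ at $x=a$. The secondary point is the global sign convention in the LYZ transform, which I will fix by comparing with the model case $f\equiv 0$.
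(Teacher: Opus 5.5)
Your proof is correct and gives the same lifted angles as the paper: $\Theta_1=\hat{\theta}-\pi$, $\Theta_2=\hat{\theta}$ for $p<0$, and $\Theta_1=\hat{\theta}$, $\Theta_2=\hat{\theta}-\pi$ for $p>0$. The difference is where you normalise the lift.

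The paper evaluates both angles at the turning point $(1,q)$. It plugs in the explicit value $q=\tan\left(\frac{\hat{\theta}-\frac{\pi}{2}}{n-1}\right)$ from Lemma \ref{ConstructionLem1}, and uses $\frac{\hat{\theta}-\frac{\pi}{2}}{n-1}\in(-\frac{\pi}{2},\frac{\pi}{2})$ to get $(n-1)\arctan q=\hat{\theta}-\frac{\pi}{2}$. The limits $\arctan f_i'\to\pm\frac{\pi}{2}$ then finish it, with the sign read off from the order of the endpoints $ap$ and $0$ over $x=a$.

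You instead fix $\Theta_2$ at $(a,0)$, using only $f_2(a)=0$, finiteness of $f_2'(a)$, and $\Theta_2\equiv\hat{\theta}\bmod\pi$. The turning point then serves only to give the relative shift $\Theta_1-\Theta_2=\pm\pi$. Your route never needs the formula for $q$ or the branch choice behind it; combining your value of $\Theta_2$ with the turning-point limit actually recovers $q=\tan\left(\frac{\hat{\theta}-\frac{\pi}{2}}{n-1}\right)$. The paper's route is a one-line evaluation once $q$ is known.

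Both arguments use that $\cC_1$ and $\cC_2$ do not cross over $(1,a)$. The cleanest justification is that $\cC$ is a smooth embedded curve: $c\neq 0$, and the only critical point of $\Imm e^{-\ii\hat{\theta}}z^n$ is $z=0$.

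One minor point on the sign: checking against the model case $f\equiv 0$ cannot fix it. There both the lifted angle and the Lagrangian phase vanish, so the check cannot distinguish $\Theta_i$ from $-\Theta_i$. The minus sign has to come, as in the paper, from the fact that the Jacob--Sheu angle is computed for the dHYM problem on $L_i^{\vee}$, whereas $\cL_i$ is the LYZ mirror of $L_i$. The curvature changes sign and $\arctan$ is odd. The paper asserts this relation without further check, so your argument is at the same level of rigour here.
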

\begin{proof}
According to Jacob and Sheu \cite{JacobSheu}, the unique lifted angle $\vartheta_i$ for the dHYM equation on $L^{\vee}_i$ can be computed in terms of the momentum profile $y_i = y_i(x)$ by
\begin{equation*}
\vartheta_i = (n-1) \arctan \frac{y_i(x)}{x} + \arctan \frac{d y_i}{dx}.
\end{equation*} 
Suppose $p \in \left(-\tan\left(\frac{\pi}{n}\right), 0\right)$. Then, in our construction, the point $a + \ii a p \in \cC_1$ lies \emph{below} the point $a \in \cC_2$, and so, moving along the branch $\cC_1$ towards the critical point $b + \ii q$ over $x = b$, we have  
\begin{equation*}
\lim_{b \to 1} \left(\lim_{x \to b^+} \frac{d y_1}{dx}\right) = - \infty,\,\lim_{b \to 1}\left(\lim_{x \to b^+} \frac{d y_2}{dx}\right) = + \infty.
\end{equation*}
So we can compute
\begin{align*}
& \vartheta_1|_{b = 1} = \lim_{x \to b^+} \left((n-1) \arctan y_i(x) + \arctan \frac{d y_i}{dx}\right)\\
& = (n-1) \arctan q + \arctan(- \infty)\\
%\end{align*}
%\begin{align*}
& = (n-1)\arctan \tan\left( \frac{\hat{\theta}(p) - \frac{\pi}{2}}{n-1} \right) - \frac{\pi}{2}\\
& = \hat{\theta} - \pi,  
\end{align*}
using $\frac{\hat{\theta}(p) - \frac{\pi}{2}}{n-1} \in (-\frac{\pi}{2}, \frac{\pi}{2})$ for $\hat{\theta} \in (0, \pi)$. Similarly,
\begin{align*}
\vartheta_2|_{b=1} = (n-1)\arctan \tan\left( \frac{\hat{\theta}(p) - \frac{\pi}{2}}{n-1} \right) + \frac{\pi}{2} = \hat{\theta}.
\end{align*} 

It follows that the LYZ mirrors of $L_1$, $L_2$, the sLag sections $\cL_1$, $\cL_2$, have Lagrangian phase angles $-\hat{\theta} + \pi$, $-\hat{\theta}$ respectively. A similar argument applies to the case $p \in \left(0, \tan\left(\frac{\pi}{n}\right)\right)$.
\end{proof}
\subsection{Properties of the family $\cL^b$} We can now summarise the main properties of the family of sLag multi-sections $\cL^b$ constructed in Section \ref{MultiSec}.
\begin{lemma}\label{PropertiesLem} The family of sLags $\cL^b$ constructed in Section \ref{MultiSec} satisfies the following properties:
\begin{enumerate}
\item[$(i)$] it is defined for all $b < 1$ sufficiently close to $1$, and each $\cL^b$ has Calabi symmetry;
\item[$(ii)$] its phase $e^{-\ii \hat{\theta}}$ equals the phases of $\cL^b_1$, $\cL^b_2[1]$, or $\cL^b_1[1]$, $\cL^b_2$ (according to the sign of $p$), at $b = 0$;
\item[$(iii)$] for fixed $b$, it is asymptotic to $\cL^b_1 \cup \cL^b_2$ at infinity in $M_{a, b}$;
\item[$(iv)$] as $b \to 1^{-}$, we have $\cL^b \to \cL_1 \cup \cL_2$ smoothly on compact subsets of $M_{a, b}$;
\item[$(v)$] it cannot be extended, with the same properties, to any value $b > 1$.
\end{enumerate}
\end{lemma}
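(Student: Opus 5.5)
The plan is to read off each property from the fact that $\cL^b$ is, in momentum coordinates, the single fixed connected component $\cC$ of $\{\Imm e^{-\ii\hat\theta}(x+\ii y)^n = c\}$ through $(a,0)$, restricted to the strip $(b,a)\times\R$. We use the structure of $\cC$ established in Section \ref{MultiSec} and the construction of $\cL^b_i$ in Section \ref{sLagsSec}.

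For $(i)$ we argue as follows. By Lemma \ref{ConstructionLem1}, $\cC$ has a unique vertical tangency, at $(1,q)$. Away from it, $\cC$ is graphical over $x$. Hence for $b<1$ close to $1$, $\cC\cap\{x>b\}$ is contained in $\{x>b\}$, and splitting at the critical point gives two graphical branches over $(1,a)$. These branches meet at $(1,q)$ and have boundary values $0$ and $ap$ over $x=a$. Lemma \ref{HarLawLem} then gives a sLag multi-section with Calabi symmetry. Rationality of the boundary values and of the critical point is arranged by an admissible $k$ (Section \ref{ScalingSec}), so $\cL^b$ is well defined.

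For $(ii)$ we compare phase angles. The phase angle of $\cL^b$ is $\hat\theta$, up to the sign convention of the LYZ transform. By Lemma \ref{PhasesLem}, $\cL_1,\cL_2$ have phase angles $-\hat\theta+\pi,\,-\hat\theta$ (or the reverse, according to the sign of $p$). Applying the shift rule from the remark after Lemma \ref{HarLawLem}, we check that these agree modulo $2\pi$ with the phase of $\cL^b$ after the appropriate shift $[1]$, matching the stated pairing.

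For $(iii)$, at infinity in $M_{a,b}$ means near the boundary fibres $x\to a$ and $x\to b$. Near $x=a$, the two branches of $\cC$ end at $a$ and $a+\ii ap$. These are exactly the boundary values of $\cC_2$ and $\cC_1$, which are the profiles of $\cL^b_2,\cL^b_1$, and both profiles are analytic solutions of the same ODE \eqref{dHYM} (with different constants in general), so the corresponding sections converge as $x\to a$. Near $x=b$, we use the fact that $\cL^b_i$ meet at $(b,q)$ with vertical tangent. This is where I would have to make the notion of asymptotic precise; I would state it as agreement of the boundary data on the momentum profiles at $x=a$.

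For $(iv)$, $\cC$ does not depend on $b$. So on compact subsets of $M$, i.e. over compact subintervals of $(1,a)$ and $(b,a)$, the profiles of $\cL^b$ are literally the branches $\cC_1,\cC_2$ of $\cC\cap(1,a)\times\R$. These define $\cL_1\cup\cL_2$, and smooth convergence follows because, in momentum coordinates, the profiles are fixed. The only subtlety is the lifted angles, which are continuous on compact sets.

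For $(v)$, which is the main obstacle, we argue by contradiction. Suppose a Calabi-symmetric multi-section with the same phase, asymptotic to $\cL^b_1\cup\cL^b_2$, existed for some $b>1$. Its profile would be a connected component of a level set of the same harmonic polynomial, with the same $\hat\theta$, passing through both boundary points $(a,0)$ and $(a,ap)$ over $x=a$. By the alternating wedge structure of Jacob and Sheu, and the uniqueness of $c$ fixed by $(a,0)$, this component must be $\cC$ itself. But $\cC$ has its only vertical tangency at $x=1<b$. Hence $\cC\cap\{x>b\}$ consists of two disjoint graphs, so it is not a connected multi-section with a critical point inside $(b,a)$. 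Moreover, the boundary of the branch would have to lie on the fibre $x=b$, which is incompatible with being a multi-section over $\Delta^o_{a,b}$. The delicate point is the claim that the constant $c$ and the component are forced; I would justify it by noting that $(a,0)$ determines $c=-\sin(\hat\theta)a^n$ and lies in a unique component.
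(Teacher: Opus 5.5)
Your proposal is correct and follows the paper's own proof item by item: (i) and (iv) come from the $b$-independence of $\cC$ in momentum coordinates, (iii) from the matching boundary values over $x=a$, (ii) from Lemma \ref{PhasesLem} and the shift rule, and (v) from the uniqueness of the component $\cC$ through $(a,0)$ together with $\cC\not\subset (b,a)\times\R$ for $b>1$ (your explicit justification of that uniqueness is a welcome addition). The one slip is your aside about the end $x\to b$ in (iii): for $b<1$ the multi-section $\cL^b$ lies over $[1,a)$ and has no end near $x=b$, so, as you ultimately conclude and as the paper does, only the non-compact end $x\to a$ is relevant.
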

\begin{proof} The properties $(i), (iv)$ follow directly from the construction in Section \ref{MultiSec}. For fixed $b <1$, the sLags $\cL^b \subset M_{a, b}, \cL^b_1 \cup \cL^b_2 \subset M_{a, b}$ are obtained as the composition of the Leung-Yau-Zaslow and Legendre transforms of functions on $C^{\infty}(b, a)$ which satisfy the same boundary conditions over $a$, corresponding to the non-compact end of $\cL^b$. This implies property $(iii)$. Property $(ii)$ follows from Lemma \ref{PhasesLem} and the action of the shift functor on phase angles. Finally, property $(v)$ follows from the fact the the only possible sLag multi-section with Calabi symmetry asymptotic to $\cL^b_1 \cup \cL^b_2$ would correspond, under LYZ and Legendre transforms, to the distinguished connected component $\cC$ of the locus $\Imm e^{-\ii \hat{\theta}}(x + \ii y)^n = c$ constructed in Section \ref{MultiSec}. However for $b > 1$ the connected component $\cC$ is \emph{not} contained in the locus $(b, a) \times \R$, and so does not yield a Lagrangian multi-section in $M_{a, b}$.  
\end{proof}
Figures \ref{fig:Defo1}, \ref{fig:Defo2} above illustrate the transition between the regions where $\cL^b$ exists (Figure \ref{fig:Defo2}, $b < 1$), respectively does not exist (Figure \ref{fig:Defo2}, $b > 1$).
\subsection{Integration}\label{IntegrationSec} Let $W_{a, b}$ denote the Landau-Ginzburg potential corresponding to the K\"ahler class $\omega_{a, b}$, as recalled in Section \ref{MirrorIntroSec}. Toric homological mirror symmetry provides an equivalence
\begin{equation*}
D^b(X) \cong \FS(T, W_{a, b}).  
\end{equation*}
For all $b$ sufficiently close to $1$, we have the sLag sections $\cL^b_i$, with LYZ transforms given by fixed line bundles $L_i$. Thus, through the equivalence above, we can associate to the sLag sections $\cL^b_i \subset M_{a, b}$ well defined objects in the Fukaya-Seidel category $\FS(T, W_{a, b})$, which we still denote by $\cL^b_i$, abusing notation.

We introduce the holomorphic volume form on $T$ given by 
\begin{equation*}
\Omega^b_{T, k} = e^{-W_{ka, kb}} \Omega_{T}. 
\end{equation*}
By the general results of Iritani (discussed e.g. in \cite{Iritani_survey}) and Fang (see \cite{Fang_charges}) there is a homomorphism
\begin{equation*}
\Gamma\!: K^0(X) \to H_n(T, \{\Rea W(k \omega_{a,b}) \gg 0\}; \Z)
\end{equation*}
to a suitable rapid decay homology group, depending on $[\omega_{a,b}]$ and $k$, such that, for $i =1, 2$, the integrals 
\begin{equation*}
\int_{[\cL^b_i]} \Omega^b_{T, k},\,\int_{[\cL^b_i[1]]} \Omega^b_{T, k}
\end{equation*} 
are well defined, where we set 
\begin{equation*}
[\cL^b_i] = \Gamma(L_i),\,[\cL^b_i[1]] = \Gamma(L_i[1]).
\end{equation*}

\subsection{Exact triangle}\label{ExactSec} Suppose $p \in \left(-\tan\left(\frac{\pi}{n}\right), 0\right)$. Then there is a nontrivial morphism space
\begin{equation*}
\Hom(\cL^b_2, \cL^b_1) \cong \Hom(L_2, L_1) \cong H^0(X, \olo(- k a p H)).
\end{equation*}

By the geometric interpretation of the cone construction in terms of connected sums, it is natural to expect that for $b < 1$ the sLag multi-section $\cL^b$ should fit into an exact triangle 
\begin{equation*}
\cL^b_2 \xrightarrow{\varphi} \cL^b_1  \to \cL^b \cong \cone(\varphi) \to \cL_2[1]
\end{equation*}
for a suitable morphism $\varphi \in \Hom(\cL^b_2, \cL^b_1)$. This fits well with the fact that, by Lemma \ref{PhasesLem}, $\cL^b_1$, $\cL^b_2[1]$ have the same Lagrangian phase angle. 

Then the correct slope inequality predicted by the Thomas-Yau conjectures would be
\begin{equation*}
\arg \int_{[\cL^b_1]} \Omega^b_{T, k}  < \arg \int_{[\cL^b_2[1]]} \Omega^b_{T, k}, 
\end{equation*}
which also fits well with Theorem \ref{JoyceThm} and with our Theorem \ref{MainThmIntro}. Similarly, when $p \in \left(0, \tan\left(\frac{\pi}{n}\right)\right)$, there is a nontrivial morphism space
\begin{equation*}
\Hom(\cL^b_1, \cL^b_2) \cong \Hom(L_1, L_2) \cong H^0(X, \olo(k a p H)),
\end{equation*}
and for $b < 1$ the sLag $\cL^b$ should fit into an exact triangle 
\begin{equation*}
\cL^b_1 \xrightarrow{\varphi} \cL^b_2  \to \cL^b \cong \cone(\varphi) \to \cL_1[1]
\end{equation*}
for some $\varphi \in \Hom(\cL^b_2, \cL^b_1)$, so the correct Thomas-Yau slope inequality should be
\begin{equation*}
\arg \int_{[\cL^b_2]} \Omega^b_{T, k}  < \arg \int_{[\cL^b_1[1]]} \Omega^b_{T, k}. 
\end{equation*}

However, we note that a priori $\cL^b \subset M_{a, b}$ is only a multi-section in the differential-geometric model $M_{a, b}$, and so it is not obvious how to associate to it an object of $\FS(T, W_{a, b})$, or in what sense it represents the (well defined) object $\cone(\varphi)$ for some suitable $\varphi \in \Hom(\cL^b_2, \cL^b_1)$.
\section{Proofs of slope inequalities}\label{SlopeInequSec}
In the present Section we will complete the proofs of Theorems \ref{MainThmIntro}, \ref{BStabCorSurf} and \ref{BStabCor}. 

\begin{prop}\label{EquivProp}
Suppose $p \in \left(-\tan\left(\frac{\pi}{n}\right), 0\right)$. Then for admissible, sufficiently large $k > 0$ all $b$ sufficiently close to $1$, the condition  
\begin{equation*}
\arg \int_{[\cL^b_1]} \Omega^b_{T, k}  < \arg \int_{[\cL^b_2[1]]} \Omega^b_{T, k} 
\end{equation*}
is well defined and equivalent to the condition $b < 1$. Similarly, for $p \in \left(0, \tan\left(\frac{\pi}{n}\right), 0\right)$, the condition
\begin{equation*}
\arg \int_{[\cL^b_2]} \Omega^b_{T, k}  < \arg \int_{[\cL^b_1[1]]} \Omega^b_{T, k} 
\end{equation*}
is well defined and equivalent to the condition $b < 1$.
\end{prop}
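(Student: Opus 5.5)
We will reduce the period integrals to the central charges of the corresponding line bundles, using the Iritani--Fang gamma integral structure of Section \ref{IntegrationSec}. Concretely, for a line bundle $L$ on $X$, Iritani's theorem gives
\begin{equation*}
\int_{\Gamma(L)} e^{-W_{ka,kb}}\Omega_T = \int_X e^{-k\omega_{a,b}}\,\widehat{\Gamma}_X\,(2\pi \ii)^{\deg/2}\ch(L) + (\text{quantum corrections}),
\end{equation*}
where the quantum corrections are of order $O(e^{-ck})$ as $k\to\infty$ (large radius limit). After the standard normalisation, the leading term is a fixed multiple of $k^n\bigl(Z_0(L/k) + O(k^{-1})\bigr)$, with $Z_0(E) = -\int_X e^{-\ii\omega_{a,b}}\ch(E)$ evaluated on $\ch(L_i)=e^{c_1(L_i)}$. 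Thus
\begin{equation*}
\arg\int_{[\cL^b_i]}\Omega^b_{T,k} = \arg \int_X e^{-\ii\omega_{a,b} + \alpha_i} + O(k^{-1}),
\end{equation*}
with $[\alpha_i] = -k^{-1}c_1(L_i)$. For $L_1$ this class is $apH - qE$, and for $L_2$ it is $-qE$. These are exactly the boundary data of the momentum profiles $\cC_1,\cC_2$.

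The next step is to compute the leading term via the Calabi ansatz. Because $\alpha_i$ and $\omega_{a,b}$ are Calabi symmetric, we have $\int_X(\omega+\ii\alpha)^n = c_n\int_b^a \frac{d}{dx}(x+\ii f(x))^n\,dx$. Hence
\begin{equation*}
\int_X e^{-\ii\omega_{a,b}+\alpha_i}\ \propto\ (a+\ii y_i(a))^n - (b+\ii q)^n,
\end{equation*}
where $y_1(a)=ap$ and $y_2(a)=0$. Write $P_i(b) := (a + \ii y_i(a))^n - (b+\ii q)^n$. By construction $\Imm e^{-\ii\hat\theta}(a+\ii y_i(a))^n = c$, while $\Imm e^{-\ii\hat\theta}(b+\ii q)^n = c$ holds exactly at $b=1$. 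Therefore $\arg P_1(1) \equiv \arg P_2(1) \equiv \hat\theta \pmod \pi$. Lemma \ref{PhasesLem} fixes the branches, so that $P_1$ and $P_2[1]$ (that is, $-P_2$) have the same argument at $b=1$, and the integrals are nonzero, which makes the arguments well defined. A first-order expansion in $b-1$ then gives
\begin{equation*}
\tfrac{d}{db}\arg P_i = \Imm\bigl(-n(b+\ii q)^{n-1}/P_i\bigr).
\end{equation*}
Evaluating at $b=1$, the difference $\frac{d}{db}\bigl(\arg P_1 - \arg(-P_2)\bigr)$ reduces to $\Imm\bigl(e^{-\ii\hat\theta}(1+\ii q)^{n-1}\bigr)$ times $\bigl(|P_2|^{-1}-|P_1|^{-1}\bigr)$, up to sign. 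The criticality condition of Lemma \ref{ConstructionLem1} forces $\Rea\, e^{-\ii\hat\theta}(1+\ii q)^{n-1}=0$, so this imaginary part is nonzero. I will determine the sign using $p<0$, which places $a+\ii ap$ below $a$ and gives $|P_1|\neq|P_2|$; this sign check is the main concrete computation. The result is that the derivative of the argument difference is nonzero with the correct sign, so $\arg P_1<\arg(-P_2)$ holds exactly for $b<1$ near $1$. The case $p>0$ is symmetric, with the roles of $1$ and $2$ swapped.

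The main obstacle is making the $O(k^{-1})$ and $e^{-ck}$ corrections uniform in $b$ near $1$. The leading-order difference vanishes to first order at $b=1$, so a correction of size $O(k^{-1})$ could shift the zero of the difference away from $b=1$. I would handle this by noting that the correction terms (Todd/Gamma-class corrections and instantons) are smooth in $b$. The wall in the corrected problem then sits at $b = 1 + O(k^{-1})$, and the statement is read with ``$b$ sufficiently close to $1$'' together with admissible large $k$. Alternatively, I would check whether the $\widehat\Gamma$ corrections cancel in the argument comparison, since $\ch$ and $\widehat\Gamma$ enter multiplicatively and the relevant terms are of lower degree.
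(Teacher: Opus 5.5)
Your route is the paper's. You transfer the inequality from periods to central charges via the Iritani--Fang expansion, then expand to first order in $b$ at $b=1$, where the charges of $L_1$ and $L_2[1]$ are aligned by construction. (Your $\frac{d}{db}\arg P_i=\Imm(P_i'/P_i)$ is the paper's computation of $\partial_b\lambda(L_i)$ in polar form, with $\partial_b Z(L_i)=-n\int_E(\omega+\ii c_1(L_i))^{n-1}$ replaced by differentiating $(b+\ii q)^n$.) However, the step you postpone, the sign, is the whole content of the statement, and you set it up incorrectly. We have $P_1'=P_2'=-n(b+\ii q)^{n-1}$, while $P_1(1)$ and $P_2(1)$ lie on \emph{opposite} rays, so $(-P_2)'=-P_1'$. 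Writing $e^{\ii\phi}$ for the common direction of $P_1(1)$ and $-P_2(1)$, one gets
\[
\frac{d}{db}\bigl(\arg P_1-\arg(-P_2)\bigr)\Big|_{b=1}=\Imm\bigl(-n(1+\ii q)^{n-1}e^{-\ii\phi}\bigr)\,\bigl(|P_1(1)|^{-1}+|P_2(1)|^{-1}\bigr).
\]
This is a sum, not a difference, so comparing $|P_1|$ with $|P_2|$ plays no role. The sign is fixed as follows. The hypothesis $p<0$ enters only through Lemma \ref{PhasesLem}, which gives $e^{\ii\phi}=-e^{\ii\hat\theta}$. The branch $m=0$ of Lemma \ref{ConstructionLem1} gives $1+\ii q=\rho e^{\ii\psi}$ with $\psi=(\hat\theta-\frac{\pi}{2})/(n-1)$. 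Hence $e^{-\ii\hat\theta}(1+\ii q)^{n-1}=-\ii\rho^{n-1}$; this is the vanishing real part you noticed, now with the sign of the imaginary part. The derivative therefore equals $-n\rho^{n-1}\bigl(|P_1(1)|^{-1}+|P_2(1)|^{-1}\bigr)<0$.

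Consequently $\arg P_1>\arg(-P_2)$ for $b<1$ near $1$, which is the \emph{opposite} of what you assert. The proposition survives only because of a compensating slip earlier. The leading term of the period of $\Gamma(L_i)$ is $\int_X e^{-\ii\omega}\ch(L_i)=\int_X e^{-\ii\omega-\alpha_i}$, not $\int_X e^{-\ii\omega+\alpha_i}$. It is thus a fixed common multiple of $\overline{P_i}$, which is exactly the paper's $Z(L_i)=\int_X(\omega+\ii c_1(L_i))^n$. Conjugation reverses the inequality and yields $\arg Z(L_1)<\arg Z(L_2[1])$ exactly for $b<1$, as in Proposition \ref{SlopeInequProp}.

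On the $O(k^{-1})$ corrections, your concern is legitimate, but the paper does no more than you here. It only uses $\arg Z(L)=\arg\bigl(\int_{\Gamma(L^{\otimes k})}e^{-W(k\omega)}\Omega_0(1+O(k^{-1}))\bigr)$ together with invariance of $\arg Z$ under common rescaling, so the wall is pinned at $b=1$ only at leading order in $k$. Your first remedy (a wall at $b=1+O(k^{-1})$) proves exactly that weaker version. Your second, an exact cancellation of the $\widehat{\Gamma}$-terms, should not be expected: the order-$k^{-1}$ term involves $\int_X(\cdots)^{n-1}c_1(X)$, whose ratio to the leading term is in general different for $L_1$ and $L_2[1]$.
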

We will first prove a preliminary result. Let $L$ denote a shifted $\R$-line bundle on $X$. We introduce the \emph{central charge}
\begin{align*}
Z(L) = \int_X (\omega_{a, b} + \ii c_1(L))^n
\end{align*}
and the corresponding \emph{$Z$-slope} 
\begin{equation*}
\lambda(L) = \frac{\Imm Z(L)}{\Rea Z(L)}.
\end{equation*}
We work with the $\R$-divisors
\begin{equation*}
L_1 = - a p H + q E,\, L_2 = q E.
\end{equation*}
\begin{prop}\label{SlopeInequProp} Suppose $p \in \left(-\tan\left(\frac{\pi}{n}\right), 0\right)$. For all $b$ sufficiently close to $1$, the condition $b < 1$ is equivalent to the slope inequality
\begin{equation*}
\lambda(L_1) < \lambda(L_2),
\end{equation*}
as well as to the phase inequality 
\begin{equation*}
\arg Z(L_1) < \arg Z(L_2[1]),
\end{equation*}
where $Z(L_1)$, $Z(L_2[1])$ lie in the upper half plane. Similarly, for $p \in \left(0, \tan\left(\frac{\pi}{n}\right)\right)$, the condition $b < 1$ is equivalent to the slope inequality
\begin{equation*}
\lambda(L_2) < \lambda(L_1),
\end{equation*}
as well as to the phase inequality 
\begin{equation*}
\arg Z(L_2) < \arg Z(L_1[1]) 
\end{equation*}
where $Z(L_2)$, $ Z(L_1[1])$ lie in the upper half plane.
\end{prop}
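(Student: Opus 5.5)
The plan is to compute $Z(L_i)$ explicitly using the intersection ring of $X=\Bl_p\PP^n$, to show that $Z(L_1)$ and $Z(L_2)$ are real-collinear at $b=1$, and then to do a first-order expansion in $b$ around $b=1$.

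\textbf{Step 1: explicit central charges.} In $H^*(X)$ we have $H^n=1$, $E^n=(-1)^{n-1}$ and $H\cdot E=0$, so every mixed term vanishes. Hence for a class $xH-yE$ we get $(xH-yE)^n=x^n-y^n$. With $[\omega_{a,b}]=aH-bE$, $c_1(L_1)=-apH+qE$ and $c_1(L_2)=qE$, this gives
\begin{equation*}
Z(L_1)=(a-\ii ap)^n-(b-\ii q)^n,\qquad Z(L_2)=a^n-(b-\ii q)^n .
\end{equation*}
These are complex conjugates of the expressions $(a+\ii ap)^n-(b+\ii q)^n$ and $a^n-(b+\ii q)^n$ attached to the momentum boundary data of $\cC_1,\cC_2$. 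The conjugation only reflects the sign convention $[\alpha]=-c_1$ relating $L_i$ to $L_i^\vee$, and I will keep track of it throughout.

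\textbf{Step 2: collinearity at $b=1$.} By Lemma \ref{ConstructionLem1} and Lemma \ref{ConstructionLem2}, together with the boundary analysis of Section \ref{MultiSec}, we have
\begin{equation*}
\Imm e^{-\ii\hat\theta}a^n=\Imm e^{-\ii\hat\theta}(a+\ii ap)^n=\Imm e^{-\ii\hat\theta}(1+\ii q)^n=c .
\end{equation*}
Therefore $\Imm e^{-\ii\hat\theta}\,\overline{Z(L_i)}|_{b=1}=0$ for $i=1,2$, so both $Z(L_i)|_{b=1}$ lie on the real line $e^{-\ii\hat\theta}\R$.

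Consistently with Lemma \ref{PhasesLem}, in the case $p<0$ one should have $Z(L_1)=r_1e^{\ii\varphi}$ and $-Z(L_2)=Z(L_2[1])=r_2e^{\ii\varphi}$, with $r_1,r_2>0$ and $\varphi\in(0,\pi)$ (namely $\varphi=\pi-\hat\theta$). I will verify the signs directly from the explicit formulas, using $p\in(-\tan\frac{\pi}{n},0)$ and $\hat\theta\in(0,\frac{\pi}{2})$; note also that $Z\neq0$. Because $\hat\theta\neq\frac{\pi}{2}$, the common argument $\varphi$ differs from $\frac{\pi}{2}$. By continuity, both charges stay in the upper half plane for $b$ near $1$.

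\textbf{Step 3: first-order variation.} Only the $E$-term depends on $b$, and it is the same for both bundles. So
\begin{equation*}
\partial_bZ(L_1)=\partial_bZ(L_2)=v:=-n(b-\ii q)^{n-1}.
\end{equation*}
Therefore, at $b=1$,
\begin{equation*}
\partial_b\big(\arg Z(L_1)-\arg Z(L_2[1])\big)=\Imm\big(ve^{-\ii\varphi}\big)\Big(\frac1{r_1}+\frac1{r_2}\Big).
\end{equation*}
Now $\arg(1-\ii q)=-\psi$ with $(n-1)\psi=\hat\theta-\frac{\pi}{2}$, so $\arg v=\pi-(\hat\theta-\frac{\pi}{2})$. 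Combined with $\varphi=\pi-\hat\theta$, this gives $\arg(ve^{-\ii\varphi})=\frac{\pi}{2}$. Thus $\Imm(ve^{-\ii\varphi})=n|1-\ii q|^{n-1}>0$, up to the conjugation/sign bookkeeping from Step 1.

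The main obstacle is exactly this bookkeeping: getting the sign right so that the derivative is \emph{positive}, which makes the phase inequality hold precisely for $b<1$. I will check it against Example \ref{SurfExm} for $n=2$, where every quantity is explicit in $\hat\theta$.

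\textbf{Step 4: conclusion.} Since the derivative is nonzero and the difference of arguments vanishes at $b=1$, the phase inequality $\arg Z(L_1)<\arg Z(L_2[1])$ holds exactly for $b<1$ when $b$ is close to $1$.

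For the slope inequality, note that $\lambda(L)=\tan\arg Z(L)$ is unchanged by the shift $L_2\mapsto L_2[1]$. Near the argument $\varphi\neq\frac{\pi}{2}$, $\tan$ is a strictly increasing local diffeomorphism, so the phase inequality is equivalent to $\lambda(L_1)<\lambda(L_2)$.

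The case $p\in(0,\tan\frac{\pi}{n})$ is identical with the roles of $L_1$ and $L_2$ exchanged, consistently with the second half of Lemma \ref{PhasesLem}.
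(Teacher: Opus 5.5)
Your proposal is correct and follows essentially the paper's argument. Both proofs expand to first order in $b$ at $b=1$, using that $Z(L_1)$ and $Z(L_2[1])$ share the argument $\pi-\hat{\theta}$ there and that $\partial_b Z(L_i)=-n(1-\ii q)^{n-1}$ is the same for both bundles. The only cosmetic difference is that the paper shows $\partial_b\lambda(L_1)>0>\partial_b\lambda(L_2)$ by sign analysis of real and imaginary parts, whereas you differentiate the difference of arguments and get its sign exactly from $\arg(ve^{-\ii\varphi})=\frac{\pi}{2}$. The one input you defer is $r_1,r_2>0$, i.e.\ $Z(L_1)\in\R_{>0}e^{\ii(\pi-\hat{\theta})}$ and $Z(L_2)\in\R_{>0}e^{-\ii\hat{\theta}}$; this is exactly what the paper takes from Lemma \ref{PhasesLem}, and your hedge about conjugation bookkeeping is unnecessary since you compute $Z(L_i)$ directly.
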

\begin{proof} It is enough to show the result for $p \in \left(-\tan\left(\frac{\pi}{n}\right), 0\right)$, the case when $p$ is positive being entirely analogous. 

Note that, by construction, we have
\begin{equation*}
\lambda(L_1)|_{b = 1} = \lambda(L_2)|_{b = 1},
\end{equation*}
so Proposition \ref{SlopeInequProp} follows if we can show 
\begin{equation*}
\del_b \lambda(L_1)|_{b = 1} > 0,\,\del_b \lambda(L_2)|_{b = 1} < 0.
\end{equation*}
We compute for $i =1, 2$
\begin{align*}
& \del_b \lambda(L_i)|_{b = 1} = \left(\frac{\del_b \Imm Z(L_i)}{\Rea Z(L_i)} - \frac{(\del_b \Rea Z(L_i)) \Imm Z(L_i)}{(\Rea Z(L_i))^2}\right)|_{b = 1}\\
& = \left(\frac{\Imm \del_b  Z(L_i)}{\Rea Z(L_i)} - \frac{(\Rea \del_b  Z(L_i)) \Imm Z(L_i)}{(\Rea Z(L_i))^2}\right)|_{b=1}.
\end{align*}
We have
\begin{align*}
& \del_b Z(L_i)|_{b = 1} = \int_X n (\omega_{a, b} + \ii c_1(L_i))^{n-1} \cdot ( - [E])\\
& = - n \int_E (\omega_{a, 1} + \ii c_1(L_i))^{n-1} = - n\int_E ( - [E] + \ii q [E])^{n-1} \\
& = - n (-1)^{n-1} (-1 + \ii q)^{n-1}, 
\end{align*}
so 
\begin{align*}
& \del_b \lambda(L_i)|_{b = 1}\\
& = \left(\frac{n (-1)^{n} \Imm(-1 + \ii q)^{n-1}}{\Rea Z(L_i)} - \frac{(n (-1)^{n} \Rea(-1 + \ii q)^{n-1}) \Imm Z(L_i)}{(\Rea Z(L_i))^2}\right)\big|_{b=1}.
\end{align*}
Recall 
\begin{align*}
q = \tan\left( \frac{\hat{\theta} - \frac{\pi}{2}}{n-1} \right).
\end{align*}
Then an elementary study shows that, in our current range
\begin{equation*}
p \in \left(-\tan\left(\frac{\pi}{n}\right), 0\right),\,\hat{\theta} \in \left(0, \frac{\pi}{2}\right),
\end{equation*}
we have
\begin{align*}
& (-1)^{n} \Imm(-1 + \ii q)^{n-1} < 0,\,(-1)^{n} \Rea(-1 + \ii q)^{n-1} < 0. 
\end{align*}
We study the signs of $\Imm Z(L_i)|_{b=1}$, $\Rea Z(L_i)|_{b=1}$ for $i = 1, 2$. According to our computation in the previous Section, we have
\begin{equation*} 
Z(L_1)|_{b = 1} \in \R_{> 0 } e^{\ii (-\hat{\theta} + \pi)},\,Z(L_2)|_{b = 1}\in \R_{> 0 } e^{- \ii \hat{\theta}}.
\end{equation*}
Thus 
\begin{equation*}
\Rea Z(L_1)|_{b = 1} \in - \R_{> 0 } \cos(\hat{\theta}) < 0,\,\Imm Z(L_1)|_{b = 1} \in \R_{> 0 } \sin(\hat{\theta}) > 0, 
\end{equation*}
and similarly
\begin{equation*}
\Rea Z(L_2)|_{b = 1} \in \R_{> 0 } \cos(\hat{\theta}) > 0,\,\Imm Z(L_2)|_{b = 1} \in - \R_{> 0 } \sin(\hat{\theta}) < 0. 
\end{equation*}
It follows that
\begin{align*}
& \frac{n (-1)^{n} \Imm(-1 + \ii q)^{n-1}}{\Rea Z(L_1)} \big|_{b=1}> 0,\\
& \frac{(n (-1)^{n} \Rea(-1 + \ii q)^{n-1}) \Imm Z(L_1)}{(\Rea Z(L_1))^2}\big|_{b = 1} < 0  
\end{align*}
so 
\begin{align*}
& \del_b \lambda(L_1)|_{b = 1}\\
& = \left(\frac{n (-1)^{n} \Imm(-1 + \ii q)^{n-1}}{\Rea Z(L_1)} - \frac{(n (-1)^{n} \Rea(-1 + \ii q)^{n-1}) \Imm Z(L_1)}{(\Rea Z(L_1))^2}\right)\big|_{b=1}\\
& > 0.
\end{align*}
Similarly we have
\begin{align*}
& \frac{n (-1)^{n} \Imm(-1 + \ii q)^{n-1}}{\Rea Z(L_2)} \big|_{b=1} < 0,\\
& \frac{(n (-1)^{n} \Rea(-1 + \ii q)^{n-1}) \Imm Z(L_2)}{(\Rea Z(L_2))^2}\big|_{b = 1}  > 0  
\end{align*}
so 
\begin{align*}
& \del_b \lambda(L_2)|_{b = 1}\\
& = \left(\frac{n (-1)^{n} \Imm(-1 + \ii q)^{n-1}}{\Rea Z(L_2)} - \frac{(n (-1)^{n} \Rea(-1 + \ii q)^{n-1}) \Imm Z(L_2)}{(\Rea Z(L_2))^2}\right)\big|_{b=1}\\
& < 0. 
\end{align*}
Note that our computations show that in fact $Z(L_1), Z(L_2[1]) = -Z(L_2)$ lie in the upper half plane, so the condition $\lambda(Z_1) < \lambda(Z_2)$ is equivalent to 
\begin{equation*}
\arg Z(L_1) < \arg Z(L_2[1]).
\end{equation*}

This completes the proof of Proposition \ref{SlopeInequProp}.
\end{proof}
We can now prove Proposition \ref{EquivProp}.
\begin{proof}[Proof of Proposition \ref{EquivProp}] Suppose $L$ is any shifted line bundle. A direct computation shows that  
\begin{equation*}
Z(L) = - e^{\ii (n-2)\frac{\pi}{2}} \int_{X} e^{-\ii \omega} \ch(L).
\end{equation*}
As observed, in a more general context, in \cite{J_toricThomasYau}, Section 5.2 (see in particular $(5.4)$ there), there is an expansion 
\begin{equation*}
\int_{X} e^{-\ii k \omega} \ch(L^{\otimes k}) = \left(\frac{1}{2\pi \ii}\right)^n k^{-n} \int_{\Gamma(L^{\otimes k})} e^{-W(k \omega)} \Omega_0\left(1 + O(k^{-1})\right), 
\end{equation*}
where
\begin{equation*}
\Gamma\!: K^0(X) \to H_n(T, \{\Rea W(k \omega) \gg 0\}; \Z)
\end{equation*}
denotes a homomorphism to a suitable rapid decay homology group (depending on $[\omega]$), as in Section \ref{IntegrationSec}. This follows from results of Iritani, see e.g. \cite{Iritani_survey}, Section 7 for an exposition. As a consequence we have an expansion
\begin{equation*}
\arg Z(L) = \arg \int_{\Gamma(L^{\otimes k})} e^{-W(k \omega)} \Omega_0\left(1 + O(k^{-1})\right).
\end{equation*}
We may replace $\omega$ and $L_1$, $L_2$ by a suitable common rescaling, and applying the identity above to these scalings gives the required result (since the left hand side is invariant), where the natural integration cycles are defined, as in Section \ref{IntegrationSec}, by $[\cL^b_i] = \Gamma(L_i)$, respectively $[\cL^b_i[1]] = \Gamma(L_i[1])$ (see \cite{Fang_charges} for naturality, i.e. compatibility with the LYZ transform of line bundles). 
\end{proof}
\begin{proof}[Proof of Theorem \ref{MainThmIntro}]
Assume that $p \in \left(-\tan\left(\frac{\pi}{n}\right), 0\right)$. Then we define the pair of graded sLags $\chL^s_i$ appearing in   Theorem \ref{MainThmIntro} as
\begin{equation*}
(\chL^s_1, \chL^s_2) := (\cL^{1-s}_2[1], \cL^{1-s}_1).
\end{equation*}
Similarly, for $p \in \left(0, \tan\left(\frac{\pi}{n}\right), 0\right)$, we set
\begin{equation*}
(\chL^s_1, \chL^s_2) := (\cL^{1-s}_1[1], \cL^{1-s}_2).
\end{equation*}

By Section \ref{ExactSec}, the group $HF^1(\chL^s_1, \chL^s_2)$ is nontrivial. Thus, by Lemma \ref{PropertiesLem}, the claims of Theorem \ref{MainThmIntro} follow immediately from Proposition \ref{EquivProp}. 
\end{proof}
We can also complete the proofs Theorems \ref{MainThmIntro}, \ref{BStabCorSurf} and \ref{BStabCor}.
\begin{proof}[Proof of Theorem \ref{BStabCorSurf}] By construction, our sLag sections $\chL^s_1$, $\chL^s_2$ are LYZ transforms of Hermitian structures on genuine line bundles (scalings of the $\R$-line bundles $L_1$, $L_2$), and so give well-defined objects of $\FS(T, W^s) \cong D^b(X)$. The result then follows straightforwardly if we can show that suitable shifts of $\chL^s_1$, $\chL^s_2$ lie in the heart $\check{\cA}^s$ of the stability condition $(\check{\cA}^s, \check{Z}^{s})$ on $\FS(T, W^s)$ induced from $(\cA, Z)$ on $D^b(X)$, and that they are $(\check{\cA}^s, \check{Z}^{s})$-stable. 

We first consider the signs of the degrees of the $\R$-line bundles $L_1$, $L_2$ with respect to $\omega_{a, b}$, for $b$ sufficiently close to $1$. We have
\begin{align*}
& L_1 \cdot \omega_{a, b} = (-a p H + q E)\cdot (a H - b E) = -a^2 p + b q,\\
& L_2 \cdot \omega_{a, b} = (q E)\cdot (a H - b E) = b q. 
\end{align*}
According to Example \ref{SurfExm}, we have
\begin{equation*}
p = -2 \cot(\hat{\theta}),\,a = \frac{1}{\sin^2(\hat{\theta})},\,q = \tan\left( \hat{\theta} - \frac{\pi}{2} \right) = -\cot(\hat{\theta}),
\end{equation*} 
so
\begin{equation*}
L_1 \cdot \omega_{a, b} = \cot(\hat{\theta})\left(\frac{2}{\sin^4(\hat{\theta})}-b\right),\,L_2 \cdot \omega_{a, b} = - b\cot(\hat{\theta}). 
\end{equation*} 

Suppose $p \in (-\tan(\frac{\pi}{2}), 0 ) = (-\infty, 0)$, so $\hat{\theta} = (0, \frac{\pi}{2})$. Then the above expressions show that, for $b$ sufficiently close to $1$, we have
\begin{align*}
L_1 \cdot \omega_{a, b} > 0,\,L_2 \cdot \omega_{a, b} < 0. 
\end{align*}
Since the heart $\cA$ is defined in \cite{ArcaraBertram} as the tilting of $\Coh(X)$ at the torsion pair $(\Coh^{>0}(X), \Coh^{\leq}(X))$, it follows that in this case $L_1,\,L_2[1] \in \cA$, and since in this range we defined $(\chL^s_1, \chL^s_2) := (\cL^{1-s}_2[1], \cL^{1-s}_1)$, it follows that $\chL^s_1,\,\chL^s_2 \in \check{\cA}^s$.

Similarly, for $p \in (0, \tan(\frac{\pi}{2}) ) = (0, \infty)$, we have $\hat{\theta} = (\frac{\pi}{2}, \pi)$, we have $\cot(\hat{\theta}) < 0$ and so
\begin{align*}
L_1 \cdot \omega_{a, b} < 0,\,L_2 \cdot \omega_{a, b} > 0, 
\end{align*}  
from which we get $L_1[1],\,L_2 \in \cA$, and, since $(\chL^s_1, \chL^s_2) := (\cL^{1-s}_1[1], \cL^{1-s}_2)$ in this range, we still conclude $\chL^s_1,\,\chL^s_2 \in \check{\cA}^s$.

Given these inclusions, the claim that $\chL^s_1,\,\chL^s_2$ are $(\check{\cA}^s, \check{Z}^{s})$-stable follows from the fact that $L_1,\,L_2[1] \in \cA$, respectively $L_1[1],\,L_2 \in \cA$ are $(\cA, Z)$-stable, since the dHYM equation is solvable on $L_1$, $L_2$ and by \cite{J_toricThomasYau}, Theorem 1.10 (which in turn follows from a result of Collins-Shi \cite{Collins_stability}).
\end{proof}
\begin{proof}[Proof of Theorem \ref{BStabCor}] As above, we may replace $L_1$, $L_2$ by suitable scalings, such that the corresponding $(\cL^{1-s}_2[1], \cL^{1-s}_1)$, respectively $(\cL^{1-s}_1[1], \cL^{1-s}_2)$ are well-defined pairs of objects of $\FS(T, W^s) \cong D^b(X)$, showing claim $(i)$.  

Suppose $p \in \left(-\tan\left(\frac{\pi}{n}\right), 0\right)$, so that the corresponding phase angle satisfies $\hat{\theta} \in \left(0, \frac{\pi}{2}\right)$ and we have $q < 0$. We compute 
\begin{align*}
& \check{Z}^{s, \Gamma}(k^{-1}\chL^s_1) = Z(L_2[1]) + \Gamma \cdot \ch_1(L_2[1])\\
& = Z(L_2[1]) - \left(\frac{H^2 + E^2}{6} + C_0 \omega^2_{a,1}\right)\cdot(q E),
\end{align*}
and similarly
\begin{align*}
& \check{Z}^{s, \Gamma}(k^{-1}\chL^s_2) = Z(L_1) + \Gamma \cdot \ch_1(L_1)\\
& = Z(L_1) + \left(\frac{H^2 + E^2}{6} + C_0 \omega^2_{a,1}\right)\cdot(- a p H + q E).
\end{align*}
Thus
\begin{align*}
& \arg \check{Z}^{s, \Gamma}(k^{-1}\chL^s_1) = \arg\left( \frac{Z(L_2[1])}{- a p} + \left(\frac{H^2 + E^2}{6} + C_0 \omega^2_{a,1}\right)\cdot(\frac{q}{a p} E)\right),\\
& \arg \check{Z}^{s, \Gamma}(k^{-1}\chL^s_2) = \arg\left( \frac{Z(L_1)}{-a p} + \left(\frac{H^2 + E^2}{6} + C_0 \omega^2_{a,1}\right)\cdot(H - \frac{q}{a p} E)\right).
\end{align*}
By our computations in Section \ref{ScalingSec}, we have
\begin{equation*}
\lim_{p \to - \tan\left(\frac{\pi}{n}\right)^+} \frac{q }{a p} = 0, 
\end{equation*}
so, for $\hat{\theta} \in \left(0, \frac{\pi}{2}\right)$ sufficiently close to $0$, it is enough to show that 
\begin{equation*}
\frac{Z(L_2[1])}{- a p},\,\frac{Z(L_1)}{-a p} + \left(\frac{H^2 + E^2}{6} + C_0 \omega^2_{a,1}\right)\cdot H 
\end{equation*}
lie in the upper half plane and satisfy
\begin{equation*}
\arg \frac{Z(L_2[1])}{- a p} > \arg\left(\frac{Z(L_1)}{-a p} + \left(\frac{H^2 + E^2}{6} + C_0 \omega^2_{a,1}\right)\cdot H\right). 
\end{equation*}
By Proposition \ref{SlopeInequProp}, we know that  
\begin{equation*}
\frac{Z(L_2[1])}{- a p},\,\frac{Z(L_1)}{-a p} 
\end{equation*}
lie in the upper half plane and satisfy
\begin{equation*}
\arg \frac{Z(L_2[1])}{- a p} > \arg \frac{Z(L_1)}{-a p}, 
\end{equation*}
so it is enough to show that we have
\begin{equation*}
\left(\frac{H^2 + E^2}{6} + C_0 \omega^2_{a,1}\right)\cdot H  > 0. 
\end{equation*}
According to \cite{BernardaraMacri_threefolds}, Section 4.B, the constant $C_0$ is given explicitly by 
\begin{equation*}
C_0 = \frac{10 \sqrt{30}}{1323}-\frac{3}{98} \approx 0.01 > 0, 
\end{equation*} 
so
\begin{equation*}
\left(\frac{H^2 + E^2}{6} + C_0 \omega^2_{a,1}\right)\cdot H = \frac{1}{6} + C_0 a^2 > 0. 
\end{equation*}

On the other hand, for $p \in \left(0, \tan\left(\frac{\pi}{n}\right)\right)$, the corresponding phase angle satisfies $\hat{\theta} \in \left(\frac{\pi}{2}, \pi \right)$, and we have $q > 0$. We compute
\begin{align*}
& \check{Z}^{s, \Gamma}(k^{-1}\chL^s_1) = Z(L_1[1]) - \left(\frac{H^2 + E^2}{6} + C_0 \omega^2_{a,1}\right)\cdot(- a p H + q E),\\
& \check{Z}^{s, \Gamma}(k^{-1}\chL^s_2) = Z(L_2) + \left(\frac{H^2 + E^2}{6} + C_0 \omega^2_{a,1}\right)\cdot(q E),
\end{align*}
so
\begin{align*}
& \arg \check{Z}^{s, \Gamma}(k^{-1}\chL^s_1) = \arg\left( \frac{Z(L_1[1])}{q} - \left(\frac{H^2 + E^2}{6} + C_0 \omega^2_{a,1}\right)\cdot(-\frac{a p}{q}H + E)\right),\\
& \arg \check{Z}^{s, \Gamma}(k^{-1}\chL^s_2) = \arg\left( \frac{Z(L_2)}{q} + \left(\frac{H^2 + E^2}{6} + C_0 \omega^2_{a,1}\right)\cdot E \right).
\end{align*}
According to Section \ref{ScalingSec}, we have
\begin{equation*}
\lim_{p \to 0} \frac{a(p) p}{q(p)} = 0,
\end{equation*}
so, for $\hat{\theta} \in \left(\frac{\pi}{2}, \pi \right)$ sufficiently close to $\frac{\pi}{2}$, it is enough to show that 
\begin{equation*}
\frac{Z(L_1[1])}{q} - \left(\frac{H^2 + E^2}{6} + C_0 \omega^2_{a,1}\right)\cdot E,\,\frac{Z(L_2)}{q} + \left(\frac{H^2 + E^2}{6} + C_0 \omega^2_{a,1}\right)\cdot E 
\end{equation*}
lie in the upper half plane and satisfy
\begin{align*}
& \arg\left(\frac{Z(L_1[1])}{q} - \left(\frac{H^2 + E^2}{6} + C_0 \omega^2_{a,1}\right)\cdot E \right)\\
&>\arg\left(\frac{Z(L_2)}{q} + \left(\frac{H^2 + E^2}{6} + C_0 \omega^2_{a,1}\right)\cdot E \right).
\end{align*}
By Proposition \ref{SlopeInequProp}, we know that  
\begin{equation*}
\frac{Z(L_1[1])}{q},\,\frac{Z(L_2)}{q}   
\end{equation*}
lie in the upper half plane and satisfy
\begin{equation*}
\arg \frac{Z(L_1[1])}{q} > \arg \frac{Z(L_2)}{q}, 
\end{equation*}
so it is enough to show that we have
\begin{equation*}
\left(\frac{H^2 + E^2}{6} + C_0 \omega^2_{a,1}\right)\cdot E  > 0, 
\end{equation*}
which follows easily from $C_0 > 0$.
\end{proof}
\section{Momentum mean curvature flow}\label{FlowSec}

We follow the notation of Section \ref{CalabiSec}. Let us denote the eigenvalues of the endomorphism $\omega^{-1} \alpha$ by $\{\lambda_1, \cdots, \lambda_n\}$. Consider a family of representatives $\alpha_t$ of $[\alpha]$ parametrised as $\alpha_t = \alpha_0 + \ii \del\delbar \phi_t$. The general \emph{line bundle mean curvature flow (MCF)}, introduced in \cite{JacobYau_special_Lag}, is the parabolic flow of potentials  
\begin{equation*}
\dot{\phi}_t = \sum^n_{i = 1} \arctan(\lambda_i) - \hat{\theta}.
\end{equation*}
Fixed points of the line bundle MCF have constant phase angle, equal to $\hat{\theta}$.

We specialise to the case when $X = \Bl_p \PP^n$. Chan and Jacob \cite{ChanJacob} showed that the line bundle MCF with Calabi ansatz on $X$ is given in terms of the momentum coordinate $x = u'(\log |z|^2)$ by
\begin{equation}\label{lbMCF}
\dot{v} = (n-1)\arctan\left(\frac{f}{x}\right) + \arctan(f') - \hat{\theta}.
\end{equation}
By taking the derivative of \eqref{lbMCF} with respect to $\rho = \log |z|^2$, they obtained a flow for \emph{momentum profiles} $f(x)$ (rather than \emph{K\"ahler potentials}) given by
\begin{equation}\label{profileMCF}
\dot{f} = u''(x)\left( \frac{f''}{1+(f')^2} + (n-1)\frac{x f' - f}{x+f^2}\right),
\end{equation}
where $\dot{f} = \del_t f$, $f' = \del_{x} f$, $u'' = \del^2_{\rho, \rho} u$. Note that, since the symplectic potential satisfies $u''(1) = u''(a) = 0$, the flow preserves the boundary conditions, i.e. it preserves the cohomology class. As the flow \eqref{profileMCF} is obtained as the derivative of \eqref{lbMCF}, its fixed points have constant phase angle, not prescribed a priori. This is especially important for us, since the Lagrangian phase angle will only be locally constant in our application.

As observed by Chan and Jacob, \eqref{profileMCF} can be extended to a flow of parametric curves $\gamma_t(s) = (x_t(s), y_t(s))\! : [0, 1] \to [1, a] \times \R$, $s$ denoting arc-length, defined in terms of the quantities
\begin{equation*}
\kappa_t = \frac{d}{d s} \arctan\left(\frac{y'_t}{x'_t}\right),\,\xi_t = \frac{d}{ds} \arctan\left(\frac{y_t}{x_t}\right),\,\opN_t = e^{\ii \frac{\pi}{2}} \gamma'_t
\end{equation*}
as 
\begin{equation}\label{MCF}
\dot{\gamma}_t = u''(x_t)\left(\kappa_t + (n-1)\xi_t \right)\opN_t. 
\end{equation}
Note that $\kappa_t$ is the usual plane curvature, so in this notation the curve-shortening flow is $\dot{\gamma}_t =  \kappa_t \opN_t$. 

We will call \eqref{MCF} the \emph{momentum mean curvature flow (MCF)}. 
\begin{proof}[Proof of Theorem \ref{FlowCor}.] Fix $b > 1$. For the sake of the proof, we denote the given sLag sections by $\chL^b_i$, while $s$ denotes the arc-length parameter as above. 

Suppose $\chL_t \subset M_{a, b}$ is a family of Calabi-symmetric Lagrangian multi-sections, with a single critical point, asymptotic to $\chL^b_1 \cup \chL^b_2$ at infinity in $M_{a, b}$ and evolving by the momentum MCF. By our discussion above $\chL_t$ is obtained as the combination of the LYZ and Legendre transforms of a family of curves $\gamma_t(s) = (x_t, y_t)\! : [0, 1] \to [b, a] \times \R$, with endpoints $(a, 0)$, $(a, p)$, evolving by the flow \eqref{MCF}, which are multi-sections for the projection $[b, a] \times \R \to [b, a]$ with a single critical point. We denote the latter by $\gamma_t(s_t)$ and consider its evolution. We choose an orientation for $\gamma_t$ so that $\gamma'_t > 0$ (the result is invariant under this choice).

By criticality, $\gamma_t$ has vertical tangent at $\gamma_t(s_t)$, and, with our choice of orientation, by the usual characterisation of $\kappa(s)$ as the radius of an obsculating circle, we obtain $\kappa_t(s_t) > 0$. Similarly we compute
\begin{align*}
\xi_t(s_t) = \frac{d}{ds} \arctan\left(\frac{y_t}{x_t}\right)\big|_{s_t} = \frac{x_t y'_t - y_t  x'_t }{x^2_t+y^2_t}\big|_{s_t} = \frac{x_t y'_t}{x^2_t+y^2_t}\big|_{s_t} > 0,
\end{align*} 
noting that $x'(s_t) = 0$ since $x(s)$ achieves a minimum at $s_t$, while $x_t, y'_t > 0$. On the other hand, we have $u''(x(s_t)) > 0$ since the critical point lies in the interior $(b, a) \times \R$. The flow \eqref{MCF} then implies that, at the critical point, $\dot{\gamma}_t(s_t)$ is a strictly positive multiple of $\opN_t(s_t) = (-1,0)$.  

It follows that if $\chL_t$ exists for all times and converges locally smoothly on $M_{a, b}$, so that the same holds for $\gamma_t(s) = (x_t, y_t)\! : [0, 1] \to [b, a] \times \R$, then the family of critical points must satisfy $\lim_{t \to \infty} x_t(s_t) = x_{\infty} \in [b, a)$, and $\gamma_t$ converges to the union of two graphs of profiles $f_i$ defined on $[x_{\infty}, a)$, each of which is a fixed point of the flow \eqref{profileMCF} with fixed phase angle $\hat{\theta}_i$ and one fixed boundary condition $f_1(a) = p$, $f_2(a) = 0$. 

If we have $x_{\infty} > b$, then, by the smooth convergence of \eqref{profileMCF}, the union $\chL_{\infty, 1} \cup \chL_{\infty, 1}$ of the limit Lagrangians $\chL_{\infty, i}$ corresponding to the profiles $f_i \in C^{\infty}[x_{\infty}, a)$ gives a connected, embedded Lagrangian $\chL_{\infty}$, with phase angle $\hat{\theta}_{\infty} = \hat{\theta}_1 = \hat{\theta}_2$, with Calabi symmetry, satisfying the boundary conditions $f_1(a) = p$, $f_2(a) = 0$. But then $\chL_{\infty}$ must coincide with the Calabi-symmetric sLag multi-section $\cL^b$ constructed in Section \ref{MultiSec}, which is a contradiction since we are in the unstable case $b > 1$, when $\cL^b$ is not contained in $M_{a, b}$.

Thus, we must have $x_{\infty} = b$. Let $\tilde{f}_i$, $i =1, 2$ denote the momentum profiles of the sLag sections $\cL^b_i$ constructed in Section \ref{sLagsSec}. The profiles $\tilde{f}_i$ are solutions of the parabolic flow \eqref{profileMCF} and, by the maximum principle, they act as barriers, so that, if $\chL$ is contained in the region bounded by $\chL^b_1$, $\chL^b_2$, the evolution of $\gamma_t(s)$ is confined to the convex region in $[b, a] \times \R$ delimited by the graphs of $\tilde{f}_i$. This implies that we must have $(x_{\infty}, y_{\infty}) = (b, q)$, so $\gamma_t(s)$ converges locally smoothly to the graphs of $\tilde{f}_i$, up to a constant shift, and by comparing phase angles we see that $\chL_{\infty}$ is given by the disconnected union $\chL^b_1 \cup \chL^b_2$ as claimed.  
\end{proof}
\section{Split Fano bundles}\label{BundlesSec}
We consider the dHYM equation with Calabi ansatz on $X_{r, m}$ with respect to $\omega$ and an auxiliary $(1,1)$-class
\begin{equation*}
[\alpha] \in \xi_2 [D_H] + q [D_{\infty}].
\end{equation*}
By the results of Jacob \cite{Jacob_Harmonic}, Section 6, similarly to the case recalled in our Section \ref{CalabiSec}, solutions correspond to graphical portions of the level sets of the harmonic polynomial $\Imm w(z)$, where $z = x + \ii y$, 
\begin{equation*}
w(z) = e^{-\ii\hat{\theta}}\int^z_0 (\xi + s)^m s^r ds
\end{equation*}
for
\begin{equation*}
\xi = \xi_1 + \ii \xi_2,\,\int_X(\omega + \ii \alpha)^{m+r+1} \in \R_{>0} e^{\ii \hat{\theta}}.
\end{equation*}

More precisely, let $\cC_0$ denote the locus $\{\Imm w(z) = 0\}$. Then dHYM solutions correspond to functions $f$ defined on a \emph{modified} momentum interval $[0, b]$, such that $f([0, b]) \subset \cC_0$, with boundary conditions
\begin{equation*}
f(0) = 0,\,f(b) = q.
\end{equation*}

Note that, unlike the approach for $X = \Bl_p \PP^n \cong X_{0, n - 1}$ described in Section \ref{CalabiSec}, now the harmonic polynomial $\Imm w(z)$ depends on the K\"ahler parameter $\xi_1 > 0$. 

We can absorb this dependence in the interval of definition by the change of variables $z = \xi_1 \tilde{z}$. Set 
\begin{equation*}
\tilde{w}(\tilde{z}) = e^{-\ii\hat{\theta}}\int^{\tilde{z}}_0 (1 + \ii \eta + s)^m s^r ds,\,\eta\in \R,
\end{equation*} 
and let $\tilde{\cC}_0$ denote the locus $\{\Imm \tilde{w}(\tilde{z}) = 0\}$. We consider functions $\tilde{f}$, defined on $[0, \xi^{-1}_1 b]$, such that $\tilde{f}([0, \xi^{-1}_1 b]) \subset \tilde{\cC}_0$, satisfying the boundary conditions
\begin{equation*}
\tilde{f}(0) = 0,\,\tilde{f}(\xi^{-1}_1 b) = q.
\end{equation*}
By our discussion, there is a bijective correspondence between functions $\tilde{f}$ as above and dHYM solutions with Calabi ansatz with cohomology class  
\begin{equation*}
[\alpha] \in (\xi_1 \eta)[D_H] + (\xi_1 q) [D_{\infty}],
\end{equation*}
with respect to a Calabi ansatz metric in the class 
\begin{equation*}
[\omega_{\xi_1, b}] \in \xi_1 [D_H] + b [D_{\infty}].
\end{equation*}

We note that for $\xi'_1 < \xi_1$ there is an inclusion
\begin{equation*}
[0, \xi^{-1}_1 b] \subset [0, (\xi'_1)^{-1} b] 
\end{equation*}
corresponding to an inclusion of momentum polytopes
\begin{equation*}
\Delta(\omega_{\xi_1, b}) \subset \Delta(\omega_{\xi'_1, b}). 
\end{equation*}
Thus, for fixed $\eta \in \R$, as the K\"ahler parameter $\xi_1 > 0$ varies, a connected subset $\tilde{\cC}'_0 \subset \tilde{\cC}_0$ contained in $(0, \xi^{-1}_1 b) \times \R$ yields a special Lagrangian multi-section $\cL^{\xi'_1}$ of the almost Calabi-Yau manifold $M_{\xi, b} \to \Delta^o(\omega_{\xi_1, b})$, defined as the closure of the union of the sLag sections which are LYZ transforms of its branches with respect to $\omega_{\xi_1, b}$.  

Given this correspondence, we go back to the more convenient coordinate $z$. By construction the locus $\{\Imm w(z) = 0\}$ near $z = 0$ is the union of several analytic branches $\cB_1,\ldots, \cB_{\ell}$; we single out a specific branch $\cB$ by imposing that it has vertical tangent at $z = 0$. We have a series expansion
\begin{equation*}
w(\ii t) = \frac{ \ii^{r + 1} \xi^m e^{-\ii \hat{\theta}}}{r+1} t^{r+1}+ O(t^{r+2}).
\end{equation*}
So writing $\xi = \rho e^{\ii \psi}$ the condition that an analytic branch of $\{\Imm w(z) = 0\}$ has vertical tangent at $z = 0$ is given by 
\begin{equation*}
\hat{\theta} = m \psi + \frac{r + 1}{2} \pi \mod \pi \Z.
\end{equation*}
Figure \ref{fig:FanoBundlesBranches} shows the configuration for $m = r = 2$, $\xi = 2 - \ii$, for which $\hat{\theta} = \arctan(\frac{3}{4}) + h \pi$.  
\begin{figure}[h]
  \includegraphics[width=5cm]{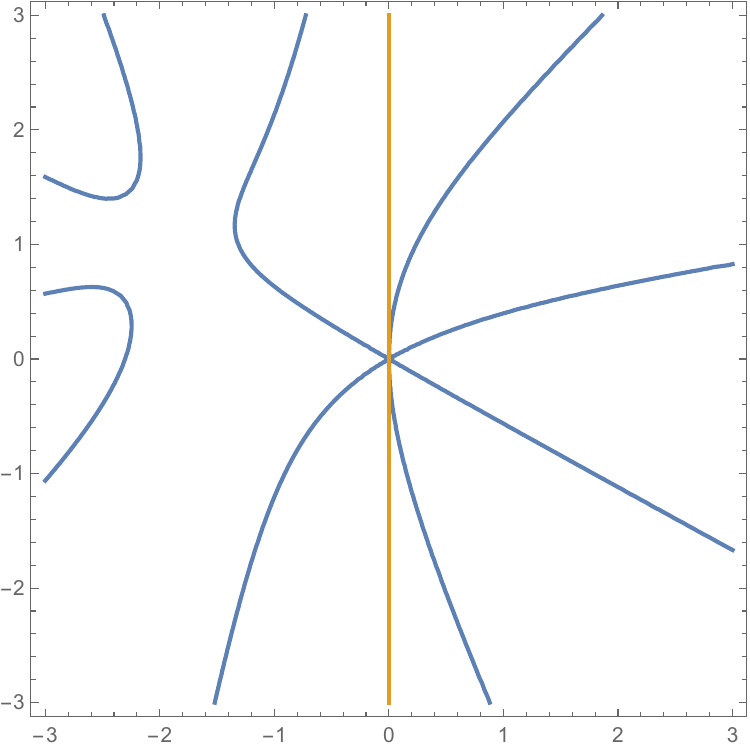}
  \caption{Selecting a branch $\cB \subset \{\Imm w(z) = 0\}$.}
  \label{fig:FanoBundlesBranches}
\end{figure}
We need to choose our configuration suitably so that the branches of $\cB \cap [0, b] \times \R$ correspond to line bundles $\cL_1$, $\cL_2$, up to scaling by $k > 0$. Writing 
\begin{equation*}
\cB \cap \{x = b\} = \{b + \ii q,\,b + \ii q'\},
\end{equation*}
the latter condition holds iff the cohomological parameters $\xi_2$, $q$, $q'$ are commensurable, i.e. generically iff
\begin{equation*}
\frac{q}{\xi_2},\,\frac{q}{q'} \in \Q.
\end{equation*}
Note that for, fixed $\hat{\theta}$, the quantities $\frac{q}{\xi_2}$, $\frac{q}{q'}$ are generically well defined, locally surjective smooth functions of the K\"ahler parameters $\xi_1$, $b$. Therefore the rationality condition must hold for a dense subset of the set of K\"ahler parameters $(\xi_1, b) \in \R_{>0} \times \R_{>0}$ (and so for a dense subset of Lagrangian phase angles $\hat{\theta}$).

We can now complete the proof of Theorem \ref{BundlesThmIntro}.
\begin{proof}[Proof of Theorem \ref{BundlesThmIntro}] Abusing notation slightly, we still write $\cL^{\xi'_1}$ for the sLag multi-section of $M_{k \xi'_1, k b}$ corresponding to the analytic branch $\cB$ for all $0 < k \xi'_1 < k \xi_1$, where as usual $k > 0$ is a scaling parameter, replacing $\omega, \alpha$ with $k\omega, k\alpha$.

Let us fix our notation so that $q > 0$, $q' < 0$. Introduce the $\R$-line bundles
\begin{equation*}
L_1 = \olo(k\xi_2 D_{H} + k q D_{\infty}),\,L_2 = \olo(k \xi_2 D_{H} + k q' D_{\infty}).
\end{equation*}
By our discussion above, we can choose $k > 0$ such that $L_1$, $L_2$ are genuine line bundles. By the same argument as in Section \ref{sLagsSec}, the line bundles $L_i$ admit unique solutions of the dHYM equation with respect to $k \omega_{\xi'_1, b}$ for all $\xi'_1$ sufficiently close to our reference parameter $\xi_1$. This is the claim $(i)$ in our statement.

We define the sLag sections $\cL^{\xi'_1}_i$ as the LYZ mirrors of $L_i$ with respect to these hermitian metrics.

Let us compute the Lagrangian phase angles of $\cL^{\xi'_1}_i$. According to \cite{Jacob_Harmonic}, Section 6, the lifted angle $\vartheta$ of a dHYM solution is given in terms of its momentum profile $f$ as
\begin{equation*}
\vartheta = m \arctan\left(\frac{\xi_2 + f}{\xi_1 + x}\right) + r \arctan\left(\frac{f}{x}\right) + \arctan{\frac{df}{dx}}.
\end{equation*} 
Applying this in our case, we see that the Lagrangian phase angles $\vartheta_i|_{\xi_1}$ of $\cL_i$ at $\xi_1$ are given by 
\begin{equation*}
\vartheta_1|_{\xi_1} = m\psi + \frac{r +1}{2} \pi,\,\vartheta_2|_{\xi_1} = m \psi - \frac{r +1}{2} \pi = m\psi +\frac{r +1}{2} \pi - (r+1)\pi. 
\end{equation*}
We choose our K\"ahler parameter $\xi_1$ generically so that the corresponding phase angle satisfies 
\begin{equation*}
\vartheta_1|_{\xi_1} = m\psi + \frac{r +1}{2} \pi \neq (2 r' + 1) \pi,\, r' \in \Z.
\end{equation*}
Then we have 
\begin{equation*}
m\psi + \frac{r+1}{2} \pi \in (-\pi, 0) \mod 2\pi, \textrm{ or } m\psi + \frac{r+1}{2} \pi \in (0, \pi) \mod 2\pi.
\end{equation*}
We claim that, for $b$ sufficiently small and $\xi'_1$ sufficiently close to $\xi_1$, the condition $\xi'_1 < \xi_1$ is equivalent to the phase inequality
\begin{equation*}
\arg \int_{[\cL^{\xi'_1}_1]} \Omega^b_{T, k}  < \arg \int_{[\cL^{\xi'_1}_2[1]]} \Omega^b_{T, k} 
\end{equation*}
in the first case, and to 
\begin{equation*}
\arg \int_{[\cL^{\xi'_1}_2]} \Omega^b_{T, k}  < \arg \int_{[\cL^{\xi'_1}_1[1]]} \Omega^b_{T, k} 
\end{equation*} 
in the second case. Clearly, this would prove $(ii)$ in our statement by choosing
\begin{equation*}
(\chL^s_1, \chL^s_2) := (\cL^{\xi_1+s}_2[1], \cL^{\xi_1+s}_1),\,(\chL^s_1, \chL^s_2) := (\cL^{\xi_1+s}_1[1], \cL^{\xi_1+s}_2)
\end{equation*}
respectively in the two cases.

We proceed as in Section \ref{SlopeInequSec}. Let us set
\begin{align*}
Z(L) = \int_{X_{r,m}} (\omega_{\xi_1, b} + \ii c_1(L))^{m+r+1}.
\end{align*}

Suppose $L$ is any line bundle with $c_1(L) = \xi_2 [D_{H}] + q_L [D_{\infty}]$. According to \cite{Jacob_Harmonic}, Section 6 (proof of Lemma 12), there is an identity
\begin{equation*}
Z(L) = \int_{X_{r,m}} (\omega_{\xi_1, b} + \ii c_1(L))^{m+r+1} = \frac{1}{2} \vol(S_{m, r}, \omega_{\xi_1, b}) P_{\xi}(b + \ii q_L)
\end{equation*}
where $S_{m, r} \to \PP^{m}$ is the unit sphere bundle in $(\olo_{\PP^m}(-1))^{\oplus r + 1} \to \PP^m$ with respect to the Hermitian metric inducing the Calabi form $\omega_{\xi_1, b}$, while $P(z)$ is the unique complex polynomial satisfying 
\begin{equation*}
P'_{\xi}(z) := \del_z P_{\xi}(z)= (\xi + z)^m z^r,\,P(0) = 0,
\end{equation*}
i.e. 
\begin{equation*}
P_{\xi}(z) = \int^{z}_0 (\xi + s)^m s^r ds.
\end{equation*}
Note that $P_{\xi}(z) = \frac{\xi^m z^{r + 1}}{r + 1} + O(z)$ as $z \to 0$.

In the case of our line bundles $L_i$, with corresponding cohomological parameters $b$, $q$, $q'$, as $b \to 0$ we also have $q, q' \to 0$ by construction, so there are expansions
\begin{equation*}
P_{\xi}(b + \ii q) = \frac{\xi^m (b + \ii q)^{r + 1}}{r + 1} + O(b),\,P_{\xi}(b + \ii q') = \frac{\xi^m (b + \ii q')^{r + 1}}{r + 1} + O(b)  
\end{equation*}
as $b \to 0$. 

Suppose that the phase $\psi$ of $\xi = \rho e^{\ii \psi}$ satisfies $m\psi + \frac{r+1}{2} \pi \in (-\pi, 0) \mod 2\pi$. Then, by the above expansions and elementary geometry in the lower half plane, the argument of $P_{\xi}(b + \ii q)$ is \emph{increasing} with respect to small variations of $\xi_1 = \Rea \xi$ for sufficiently small $b$, namely
\begin{equation*}
\del_{\xi_1} \arg P_{\xi}(b + \ii q) > 0 
\end{equation*}
for $|\xi_1|, b < \varepsilon$ for sufficiently small $\varepsilon > 0$. Similarly, since we are assuming that $r$ is even, we have $m\psi - \frac{r+1}{2} \pi  = m\psi + \frac{r+1}{2} - (r+1)\pi \in (0, \pi) \mod 2\pi$, and by elementary geometry in the upper half plane we find
\begin{equation*}
\del_{\xi_1} \arg P_{\xi}(b + \ii q') < 0. 
\end{equation*}

It follows that with these assumptions the inequality $\xi'_1 <  \xi_1$, depending on a small variation $\xi'_1$ of the K\"ahler parameter $\xi_1$, is equivalent to 
\begin{equation*}
\arg Z(L_1) < \arg Z(L_2[1]).
\end{equation*}

The same argument applies when $m\psi + \frac{r+1}{2} \pi \in (0, \pi) \mod 2\pi$, showing that $\xi'_1 <  \xi_1$ is equivalent to $\arg Z(L_2) < \arg Z(L_1[1])$ in this case. 

Finally we can translate these inequalities for central charges to inequalities for periods, as in the proof of Theorem \ref{MainThmIntro}.

Property $(iii)$ follows from the same argument given in the proof of Lemma \ref{PropertiesLem}.
\end{proof}
\bibliographystyle{abbrv}
 \bibliography{biblio_dHYM}
 
\noindent SISSA, via Bonomea 265, 34136 Trieste, Italy;\\
Institute for Geometry and Physics (IGAP), via Beirut 2, 34151 Trieste, Italy\\
jstoppa@sissa.it

\end{document}